\newtheorem{proposition}{Proposition}[section]
\newtheorem{theorem}[proposition]{Theorem}
\newtheorem{lemma}[proposition]{Lemma}
\newtheorem{definition}[proposition]{Definition}
\newtheorem{corollary}[proposition]{Corollary}
\newtheorem{remark}[proposition]{Remark}
\numberwithin{equation}{section}
\renewenvironment{proof}{\smallskip\noindent\emph{\textbf{Proof.}}%
  \hspace{1pt}}{\hspace{-5pt}{\nobreak\quad\nobreak\hfill\nobreak%
    $\square$\vspace{2pt}\par}\smallskip\goodbreak}
\newenvironment{proofof}[1]{\smallskip\noindent{\textbf{Proof~of~#1.}}%
  \hspace{1pt}}{\hspace{-5pt}{\nobreak\quad\nobreak\hfill\nobreak%
    $\square$\vspace{2pt}\par}\smallskip\goodbreak}
\newcommand{\C}[1]{\mathbf{C^{#1}}}
\newcommand{\Cc}[1]{\mathbf{C_c^{#1}}}
\renewcommand{\L}[1]{\mathbf{L^#1}}
\newcommand{\BV}{\mathbf{BV}}
\newcommand{\W}[2]{{\mathbf{W}^{#1,#2}}}
\newcommand{\modulo}[1]{{\left|#1\right|}}
\newcommand{\norma}[1]{{\left\|#1\right\|}}
\newcommand{\reali}{{\mathbb{R}}}
\newcommand{\R}{\mathbb R}
\newcommand{\naturali}{{\mathbb{N}}}
\newcommand{\N}{{\mathbb N}}
\newcommand{\interi}{{\mathbb{Z}}}
\renewcommand{\epsilon}{\varepsilon}
\renewcommand{\phi}{\varphi}
\renewcommand{\theta}{\vartheta}
\newcommand{\tv}{\mathinner{\rm TV}}
\newcommand{\sgnp}{\operatorname{sgn}^{+}}
\newcommand{\sgnm}{\operatorname{sgn}^{-}}
\renewcommand{\d}[1]{\mathinner{\mathrm{d}{#1}}}
\newcommand{\dt}{{\Delta t}}
\newcommand{\dx}{{\Delta x}}
\newcommand{\rh}[1]{\rho^{n}_{#1}}
\newcommand{\Rh}[1]{R^{n}_{#1}}
\DeclareMathOperator{\sgn}{sgn}
\newcommand{\del}{\partial}
\newcommand{\be}{\begin{equation}}
\newcommand{\ee}{\end{equation}}
\definecolor{ffqqqq}{rgb}{1.,0.,0.}
\definecolor{uuuuuu}{rgb}{0.26666666666666666,0.26666666666666666,0.26666666666666666}
\let\@fnsymbol\@arabic
\newcommand\appendix@section[1]{%
\refstepcounter{section}%
\orig@section*{Appendix \@Alph\c@section: #1}%
\addcontentsline{toc}{section}{Appendix \@Alph\c@section: #1}%
}
\let\orig@section\section
\g@addto@macro\appendix{\let\section\appendix@section}
\title{Well-posedness of IBVP for 1D\\scalar non-local conservation laws}
\author{Paola Goatin\footnotemark[1] \and Elena Rossi\footnotemark[1]}
\date{ }
\begin{document}
\maketitle
\footnotetext[1]{Inria Sophia Antipolis - M\'editerran\'ee,
  Universit\'e C\^ote d'Azur, Inria, CNRS, LJAD, 2004 route des
  Lucioles - BP 93, 06902 Sophia Antipolis Cedex, France. E-mail:
  \texttt{\{paola.goatin, elena.rossi\}@inria.fr}}

\begin{abstract}

  \noindent
  We consider the initial boundary value problem (IBVP) for a
  non-local scalar conservation laws in one space dimension. The
  non-local operator in the flux function is not a mere convolution
  product, but it is assumed to be \emph{aware of
    boundaries}. Introducing an adapted Lax-Friedrichs algorithm, we
  provide various estimates on the approximate solutions that allow to
  prove the existence of solutions to the original IBVP. The
  uniqueness follows from the Lipschitz continuous dependence on
  initial and boundary data, which is proved exploiting results
  available for the \emph{local} IBVP.

  \medskip

  \noindent\textit{2010~Mathematics Subject Classification: 35L04,
    35L65, 65M12, 65N08, 90B20}

  \medskip

  \noindent\textit{Keywords: Scalar conservation laws, Non-local flux,
    Initial-boundary value problem, Lax-Friedrichs scheme}
\end{abstract}

\section{Introduction}

We consider the following Initial Boundary Value Problem (IBVP) on the
open bounded interval $]a,b[\, \subset \reali$
\begin{equation}\label{eq:original}
  \left\{
    \begin{array}{l@{\qquad}r@{\,}c@{\,}l}
      \del_t \rho + \d{}_x f(t,x,\rho, \mathcal{J}\rho) =0,
      & (t,x)
      &\in
      &\R^+\times ]a,b[,
      \\
      \rho(0,x) = \rho_o(x),
      & x
      &\in
      &]a,b[,
      \\
      \rho(t,a) = \rho_a (t),
      & t
      &\in
      &\R^+,
      \\
      \rho(t,b) = \rho_b (t),
      & t
      &\in
      &\R^+,
    \end{array}
  \right.
\end{equation}
where $\mathcal{J}$ denotes a non-local operator and we use the
notation
\begin{align}
  \nonumber
  \d{}_x f\left(t,x,\rho (t,x), \left(\mathcal{J}\rho (t)\right) \!(x)\right) = \
  & \partial_x f\left(t,x,\rho (t,x), \left(\mathcal{J}\rho (t)\right) \!(x)\right)
  \\ \label{eq:Dx}
  & + \partial_\rho f\left(t,x,\rho (t,x), \left(\mathcal{J}\rho (t)\right) \!(x)\right) \, \partial_x \rho (t,x)
  \\ \nonumber
  & + \partial_R f\left(t,x,\rho (t,x),\left(\mathcal{J}\rho (t)\right)\! (x)\right) \,
    \partial_x \left(\mathcal{J}\rho (t)\right) \!(x).
\end{align}
The same problem was studied in~\cite{DFG}. In that case, the choice
for $\mathcal{J}$ is the classical convolution product
$\mathcal{J}\rho = \rho * \eta$, $\eta$ being a smooth convolution
kernel. However, in such formulation, the non-local term may exceed
the boundaries of the spatial domain. The authors address this issue
by extending the solution outside the spatial domain, setting it
constantly equal to the corresponding boundary condition value.

Here, we propose a different approach. We follow the treatment of the
boundary conditions proposed in~\cite{multipop}, where a particular
multi-dimensional system of conservation laws in bounded domains with
zero boundary conditions is considered.  More precisely, a non-local
operator aware of the presence of boundaries is introduced. In the
present one-dimensional setting, this reads
\begin{equation}
  \label{eq:nl}
  \left(\mathcal{J}\rho (t)\right) \!(x) = \frac{1}{W (x)} \int_a^b \rho (t,y) \, \omega (y-x)\d{y},
  \quad   \mbox{with} \quad
  W (x) = \int_a^b \omega (y-x) \d{y},
\end{equation}
for a suitable convolution kernel $\omega$.

\smallskip

In recent years, the literature on non-local conservation laws has
widely increased. These equations are indeed used to model various
physical phenomena: from sedimentation models~\cite{sedimentation} to
granular flow~\cite{granular}, from vehicular
traffic~\cite{BlandinGoatin} to crowd
dynamics~\cite{Carrillo,ColomboGaravelloMercier2012,ColomboLecureuxPerDafermos},
from conveyor belts~\cite{matflow} to supply chains~\cite{ArmbrusterDegondRinghofer}.\\
Although physically those models might be defined in a bounded domain
and numerical integrations require it as well, they have been mostly
studied in the whole space $\reali$ or $\reali^n$. The main difficulty
lies indeed in the fact that the non-local operator may need to
evaluate the unknown outside the boundaries of the spatial domain,
where it is not defined.
\\
The analysis of the non-local problem~\eqref{eq:original} is carried
out exploiting the same strategy used in both~\cite{DFG}
and~\cite{1d}.  As already mentioned, \cite{DFG} studies the non local
IBVP~\eqref{eq:original} where the non-local operator is the standard
convolution product, while~\cite{1d} considers the \emph{local}
problem for a balance law, i.e.~a one dimensional IBVP where the flux
function has the form $f(t,x,\rho)$ and there is also a source term.
We remark that it could be possible to use the results of~\cite{1d} to
study the non-local problem~\eqref{eq:original}: indeed, the link
between the two problems is obtained by defining the \emph{local} flux
by $\tilde f(t,x,\rho) = f(t,x,\rho, \mathcal{J}\rho)$, where
$\mathcal{J}\rho = \left(\mathcal{J}\rho (t)\right) \!(x)$.  However,
in this way the \emph{a priori} estimates on the solution would be
less precise than those presented in this work.  Namely, a positivity
result and an $\L1$-bound on the solution are missing in~\cite{1d}.
Moreover, $\L\infty$-estimate recovered here depends on the first
derivatives of the flux function, see Theorem~\ref{thm:main}, while
using the results of~\cite{1d} yields an estimate depending on the
mixed second derivatives of $f$.
\\
Nevertheless, the result concerning the stability with respect to the
flux function proved in~\cite{1d}, recalled below in
Theorem~\ref{thm:bohStab}, is of crucial importance in this work,
since it contributes significantly in the proof of the Lipschitz
continuous dependence of solutions to~\eqref{eq:original} on initial
and boundary data, see Proposition~\ref{prop:lipDep}, and thus in the
proof of the uniqueness of solution to~\eqref{eq:original}.  At this
regard, we remark that the stability proof provided in~\cite{DFG} is
wrong, but could be fixed following the same strategy proposed here.

\smallskip

The paper is organised as follows. Section~\ref{sec:MR} presents the
assumptions needed on problem~\eqref{eq:original} and the main result
of this paper, whose proof is postponed to Section~\ref{sec:proof}.
Section~\ref{sec:existence} is devoted to the introduction of the
finite volume approximation of problem~\eqref{eq:original} and its
analysis.  The Lipschitz continuous dependence of solutions
to~\eqref{eq:original} on initial and boundary data is proved in
Section~\ref{sec:lipDep}. The final appendix~\ref{sec:app} recalls
some results from~\cite{1d} on the \emph{local} IBVP, necessary
throughout the paper.

\section{Main results}
\label{sec:MR}

We introduce the following notation:
\begin{displaymath}
  \label{eq:sgn}
  \sgnp (s)
  =
  \begin{cases}
    1 & \mbox{if } s>0,
    \\
    0 & \mbox{if } s \leq 0,
  \end{cases}
  \qquad
  \sgnm (s)
  =
  \begin{cases}
    0 & \mbox{if } s \geq 0,
    \\
    -1 & \mbox{if } s < 0,
  \end{cases}
  \qquad
  \begin{array}{r@{\;}c@{\;}l}
    s^+
    & =
    & \max\{s, 0\} ,
    \\
    s^-
    & =
    & \max\{-s, 0\} .
  \end{array}
\end{displaymath}
In the rest of the paper, we will denote
$\mathcal{I} (r,s) = [\min\left\{r,s\right\}, \max\left\{r,s
\right\}]$, for any $r, s \in \R$.

\noindent We make the following assumptions on the flux function $f$
and on the convolution kernel $\omega$:
\begin{enumerate}[label={$\boldsymbol{(f)}$}]
\item
  \label{f}$f\in \C2 (\reali^+ \times [a,b] \times \reali \times
  \reali; \reali)$ and there exist $L, C>0$ such that
  \begin{align*}
    &f(t,x,0,R) =  0
    &
    &\mbox{for }  t \in \reali^+,\,  x \in [a,b], \, R \in \reali,
    \\
    & \sup_{t,x,\rho,R}\modulo{\partial_\rho f (t,x,\rho,R)} < L,
    &&
    \\
    & \sup_{t,x,R}\modulo{\partial_x f (t,x,\rho,R)} < C \modulo{\rho},
    &
    & \sup_{t,x,R}\modulo{\partial_R f (t,x,\rho,R)} < C \modulo{\rho},
    \\
    & \sup_{t,x,R}\modulo{\partial_{xx}^2 f (t,x,\rho,R)} < C \modulo{\rho},
    &
    & \sup_{t,x,R}\modulo{\partial_{xR}^2 f (t,x,\rho,R)} < C \modulo{\rho},
    \\
    & \sup_{t,x,R}\modulo{\partial_{RR}^2 f (t,x,\rho,R)} < C \modulo{\rho}.
    &&
  \end{align*}
\end{enumerate}
\begin{enumerate}[label={$\boldsymbol{(\omega)}$}]
\item \label{omega}
  $\omega \in (\C2 \cap \W21 \cap \W2\infty) (\reali;\reali)$ is such
  that
  \begin{displaymath}
    \int_\reali \omega (y) \d{y} = 1
  \end{displaymath}
  and there exists $K_\omega > 0$ such that for all $x \in [a,b]$
  \begin{equation}
    \label{eq:W}
    W (x) =  \int_a^b \omega (y-x) \d{y} \geq K_\omega.
  \end{equation}
\end{enumerate}
The requirement~\eqref{eq:W} guarantees that $\mathcal{J}$
in~\eqref{eq:nl} is well defined for all $x\in\, ]a,b[$.

\smallskip

We recall below two different definitions of solution to
problem~\eqref{eq:original}.  Recall that the two definitions are
equivalent for functions in
$(\L\infty \cap \BV) (\R^+\times]a,b[; \R)$. We refer
to~\cite{definizioni} for further details on the link between this two
definitions.

The first definition follows from~\cite{BardosLerouxNedelec}.
\begin{definition}
  \label{def:solBLN} A function
  $\rho\in (\L\infty \cap \BV) (\R^+\times]a,b[; \R)$ is an entropy
  weak solution to problem~\eqref{eq:original} if, for all
  $\phi\in\Cc1(\R^2;\R^+)$ and $k\in\R$,
  \begin{align}
    \nonumber
    & \int_0^{+\infty} \int_{a}^{b}  \left\{ \modulo{\rho  -k} \partial_t \phi (t,x)
      + \sgn\left(\rho  - k\right) \left[
      f\left(t, x, \rho, R (t,x)\right) - f\left(t, x, k, R (t,x)\right)
      \right] \, \partial_x \phi (t,x)
      \right.
    \\
    \nonumber
    & \qquad\qquad \left.
      - \sgn\left(\rho -k\right) \left( \partial_x f \left(t, x, k, R (t,x)\right)
      + \partial_R f\left(t, x, k ,R (t,x)\right) \, \partial_x R (t,x)
      \right)\,  \phi (t,x) \right\}
      \d{x} \d{t}
    \\
    \label{eq:BLN}
    & + \int_{a}^{b}  \modulo{\rho_o(x)-k}  \phi(0,x)  \d{x}
    \\ \nonumber
    & + \int_0^{+\infty}
      \sgn \left(  \rho_a(t)  - k \right)
      \left[
      f \left(t, a, \rho (t, a^+), R(t,a)\right) - f\left(t, a, k, R (t,a)\right)
      \right]\,  \phi(t,a)  \d{t}
    \\
    \nonumber
    & - \int_0^{+\infty}
      \sgn \left(  \rho_b(t)  - k \right)
      \left[
      f\left(t, b, \rho (t,b^-), R (t,b)\right) - f \left(t, b, k, R(t,b)\right)
      \right] \,\phi(t,b) \d{t}
      \geq  0,
  \end{align}
  where, for $x \in [a,b]$,
  \begin{equation}
    \label{eq:R}
    R (t,x) = \left(\mathcal{J}\rho (t)\right)\! (x) =
    \frac{1}{W (x)} \int_a^b \rho (t,y) \, \omega (y-x) \d{y},
  \end{equation}
  and $W$ is as in~\eqref{eq:nl}.
\end{definition}
The second definition was introduced in~\cite{Martin,Vovelle2002}.
\begin{definition}
  \label{def:MV}
  A function $\rho \in \L\infty(\reali^+ \times ]a,b[; \R)$ is an
  entropy weak solution to problem~\eqref{eq:original} if, for all
  $\phi \in \Cc1 (\R^2; \R^+)$ and $k \in \R$,
  \begin{align}
    \nonumber
    & \int_0^{+\infty} \!\!\!\int_{a}^{b} \! \left\{ \left(\rho  -k\right)^\pm \partial_t \phi (t,x)
      + \sgn^\pm\left(\rho  - k\right) \left[
      f\left(t, x, \rho, R (t,x)\right) - f\left(t, x, k, R (t,x)\right)
      \right] \, \partial_x \phi (t,x)
      \right.
    \\
    \nonumber
    & \qquad\qquad \left.
      - \sgn^\pm\!\left(\rho -k\right)  \left( \partial_x f \left(t, x, k, R (t,x)\right)\!
      + \partial_R f\left(t, x, k ,R (t,x)\right) \, \partial_x R (t,x)
      \right)   \phi (t,x) \right\}
      \d{x} \d{t}
    \\
    \label{eq:MV}
    & + \int_{a}^{b}  \left(\rho_o(x)-k\right)^\pm  \phi(0,x)  \d{x}
    \\ \nonumber
    & + \norma{\partial_\rho f}_{\L\infty}
      \left( \int_0^{+\infty}
      \left(\rho_a(t)  - k \right)^\pm  \phi(t,a)  \d{t}
      + \int_0^{+\infty}
      \left( \rho_b(t)  - k \right)^\pm \phi(t,b) \d{t}\right)
      \geq  0,
  \end{align}
  where $R$ is as in~\eqref{eq:R} and
  $ \norma{\partial_\rho f}_{\L\infty} =\displaystyle \sup_{(t,x) \in
    \reali^+\times [a,b]} \modulo{\partial_\rho f\left(t, x, \rho
      (t,x), R (t,x)\right)}$.
\end{definition}

We can now state our main result.

\begin{theorem}
  \label{thm:main}
  Let~$\boldsymbol{(f)}$ and $\boldsymbol{(\omega)}$ hold. Let
  $\rho_o \in \BV (\,]a,b[; \reali^+)$ and
  $\rho_a, \, \rho_b \in \BV(\reali^+; \reali^+)$.  Then, for all
  $T>0$, problem~\eqref{eq:original} has a unique entropy weak
  solution
  $\rho \in (\L1 \cap \L\infty \cap \BV) ([0,T] \times
  ]a,b[;\reali^+)$. Moreover, the following estimates hold: for any
  $t \in [0,T]$,
  \begin{align*}
    \norma{\rho (t)}_{\L1 ([a,b])}
    \leq\
    &  \mathcal{R}_1(t),
    \\
    \norma{\rho (t)}_{\L\infty(]a,b[)}
    \leq \
    &  \mathcal{R}_\infty(t),
    \\
    \tv (\rho (t))
    \leq \
    &  e^{t \, \mathcal{T}_1 (t)} \left(\tv (\rho_o) + \tv (\rho_a; [0,t]) + \tv (\rho_b;[0,t])\right)
      + \frac{\mathcal{T}_2 (t)}{\mathcal{T}_1 (t)} (e^{t \, \mathcal{T}_1 (t)} -1),
  \end{align*}
  and, for $\tau > 0$,
  \begin{displaymath}
    \norma{\rho (t) - \rho (t-\tau)}_{\L1 ([a,b])}
    \leq
    \tau \left(\mathcal{C}_t (t) + 3 \, L \left(\tv(\rho_a;[t-\tau,t])+ \tv(\rho_b;[t-\tau,t])\right)
    \right),
  \end{displaymath}
  where
  \begin{align}
    \label{eq:R1}
    \mathcal{R}_1(t) = \
    & \norma{\rho_o}_{\L1 ([a,b])} + \norma{\partial_\rho f}_{\L\infty}
      \, \left(\norma{\rho_a}_{\L1 ([0,t])} + \norma{\rho_b}_{\L1 ([0,t])}\right),
    \\
    \label{eq:Rinf}
    \mathcal{R}_\infty(t) = \
    & e^{t \, C \, \left(1 + \mathcal{L} \, \mathcal{R}_1 (t)\right)}
      \max\left\{\norma{\rho_o}_{\L\infty (]a,b[)}, \, \norma{\rho_a}_{\L\infty ([0,t])}, \,
      \norma{\rho_b}_{\L\infty ([0,t])}\right\},
    \\
    \label{eq:T1}
    \mathcal{T}_1(t) =\
    & \norma{\partial_{\rho x}^2 f}_{\L\infty ([0,t] \times [a,b]\times \reali^2 )}
      +  \mathcal{L}\, \mathcal{R}_1 (t) \,
      \norma{\partial_{\rho R}^2 f}_{\L\infty ([0,t] \times [a,b]\times \reali^2 )},
    \\
    \mathcal{T}_2(t) = \
    & \mathcal{K}_2 (t)
      + \frac32 \, C (1 + \mathcal{L}\, \mathcal{R}_1 (t)) \,
      \mathcal{R}_\infty (t)
      + \left[\mathcal{K}_3 (t) +\frac{C}{2} \, (1 + \mathcal{L}\,
      \mathcal{R}_1 (t)) \right] \norma{\rho_a}_{\L\infty ([0,t])},
  \end{align}
  with $\mathcal{L}$ as in~\eqref{eq:L},
  $\mathcal{K}_2 (t), \mathcal{K}_3 (t)$ as in~\eqref{eq:K123}, with
  $\mathcal{C}_1 (t)$ substituted by $\mathcal{R}_1 (t)$, and
  $\mathcal{C}_t (t)$ is as in~\eqref{eq:Ct}, with $\alpha=L$.
\end{theorem}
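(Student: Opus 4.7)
The overall strategy follows the architecture announced in the introduction: build approximate solutions via an adapted Lax--Friedrichs scheme (Section~\ref{sec:existence}), derive uniform a priori bounds mirroring the four estimates claimed in the theorem, extract a convergent subsequence by Helly's compactness theorem to obtain existence, and finally deduce uniqueness from the Lipschitz continuous dependence of Proposition~\ref{prop:lipDep}, which is itself obtained by reducing the non-local IBVP to a \emph{local} one and applying the stability result recalled in Theorem~\ref{thm:bohStab}.

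The four existence estimates must be proved in a specific order because of their mutual dependencies. First comes positivity: under a suitable CFL condition the discrete update map is monotone in each of its arguments, so non-negative data $\rho_o,\rho_a,\rho_b$ produce a non-negative approximation $\rho^{\Delta x}$. Second, positivity lets one drop absolute values in a direct summation of the scheme; telescoping the internal numerical fluxes and using the vanishing condition $f(t,x,0,R)=0$ in~\ref{f} then produces the $\L1$ bound~\eqref{eq:R1}, with the boundary contribution $\norma{\partial_\rho f}_{\L\infty}(\norma{\rho_a}_{\L1}+\norma{\rho_b}_{\L1})$ emerging from the numerical flux at $x=a,b$. Third, a discrete Gronwall argument on $\max_j \rh{j}$ delivers~\eqref{eq:Rinf}; the linear-in-$\rho$ control $\modulo{\partial_x f},\modulo{\partial_R f}\leq C\modulo{\rho}$ from~\ref{f} combined with the already-established $\L1$ bound on $\rho$ (and hence on $R$, via~\ref{omega} and~\eqref{eq:W}) is what prevents any second derivative of $f$ from appearing, and thereby yields the sharpening over~\cite{1d} emphasised in the introduction.

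The BV estimate is the technically most involved step. I would bound $\sum_j\modulo{\rh{j+1}-\rh{j}}$ by a discrete Gronwall inequality whose growth rate is $\mathcal{T}_1$ and whose forcing term is $\mathcal{T}_2$; the non-local operator $\mathcal{J}$ enters through Lipschitz estimates on $R$ and $\partial_x R$ in terms of $\rho$, both of which follow from~\ref{omega} and the lower bound~\eqref{eq:W} on $W$. The time-Lipschitz estimate then follows routinely from the spatial BV bound by a discrete integration by parts, with the $3L(\tv(\rho_a;[t-\tau,t])+\tv(\rho_b;[t-\tau,t]))$ contribution arising from the numerical boundary fluxes. Once these uniform bounds are in hand, Helly's theorem provides a subsequence converging in $\L1$ to some limit $\rho\in(\L1\cap\L\infty\cap\BV)([0,T]\times\,]a,b[;\reali^+)$; the smoothness of $\omega$ and~\eqref{eq:W} guarantee strong convergence of the discrete averages $R^{\Delta x}$ to $R(\rho)$, so that passing to the limit in the discrete Kruzhkov entropy inequality yields an entropy solution in the sense of Definition~\ref{def:solBLN}, equivalently Definition~\ref{def:MV}.

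For uniqueness I would follow the strategy suggested in the introduction. Given two entropy solutions $\rho_1,\rho_2$, each $\rho_i$ is simultaneously an entropy solution of the \emph{local} IBVP with frozen flux $\tilde f_i(t,x,\rho)=f(t,x,\rho,R_i(t,x))$ and source $\partial_R f(t,x,\rho_i,R_i)\,\partial_x R_i$. Theorem~\ref{thm:bohStab} then controls $\norma{\rho_1(t)-\rho_2(t)}_{\L1}$ in terms of an appropriate norm of $\tilde f_1-\tilde f_2$, while~\ref{omega} together with~\eqref{eq:W} bounds both $\norma{R_1(t)-R_2(t)}_{\L\infty}$ and $\norma{\partial_x(R_1-R_2)(t)}_{\L\infty}$ by a constant times $\norma{\rho_1(t)-\rho_2(t)}_{\L1}$, so that Gronwall closes the loop. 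The hardest step will be the BV estimate: extracting the constants $\mathcal{T}_1,\mathcal{T}_2$ in exactly the stated form demands careful bookkeeping of how derivatives of the boundary-aware kernel $\omega/W$ interact with the Lax--Friedrichs cancellations, so that only first derivatives of $f$ appear in $\mathcal{T}_1$ and only the prescribed combinations in $\mathcal{T}_2$.
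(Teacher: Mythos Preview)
Your outline follows the paper's proof essentially step by step: positivity, $\L1$, $\L\infty$, and spatial $\BV$ bounds on the Lax--Friedrichs approximation (in that order and with those dependencies), time-Lipschitz control from the spatial $\BV$ bound, Helly compactness, passage to the limit in a discrete entropy inequality, and uniqueness via Proposition~\ref{prop:lipDep} by freezing $R$ and invoking the local stability Theorem~\ref{thm:bohStab} plus Gronwall.

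Two small corrections. First, in the uniqueness paragraph you write that $\rho_i$ solves the local IBVP with flux $\tilde f_i(t,x,\rho)=f(t,x,\rho,R_i(t,x))$ \emph{and} source $\partial_R f(t,x,\rho_i,R_i)\,\partial_x R_i$. This double-counts: once $R_i$ is frozen, the total $x$-derivative of $\tilde f_i$ already contains the term $\partial_R f\,\partial_x R_i$, so $\rho_i$ solves the pure conservation law $\partial_t\rho+\d{}_x\tilde f_i(t,x,\rho)=0$ with no source, exactly as the paper does in~\eqref{eq:32}. Second, the claim that ``only first derivatives of $f$ appear in $\mathcal{T}_1$'' is inaccurate: by~\eqref{eq:T1}, $\mathcal{T}_1$ is built from $\partial^2_{\rho x}f$ and $\partial^2_{\rho R}f$. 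The sharpening over~\cite{1d} that the introduction advertises concerns the $\L\infty$ bound $\mathcal{R}_\infty$, not the $\BV$ constants.
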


\section{Existence of weak entropy solutions}
\label{sec:existence}

Fix $T>0$. Fix a space step $\dx$ such that $b-a = N \Delta x$, with
$N \in\N$, and a time step $\dt$ subject to a CFL condition, specified
later.  Introduce the following notation
\begin{align*}
  y_{k} := \ & (k-1/2) \dx,
  &
    y_{k+1/2} := \ & k \dx
  &
    \mbox{for } & k \in \interi,
  \\
  x_{j+1/2}:= \ & a + j \dx = a + y_{j + 1/2},
  & &&
       \mbox{for } & j = 0, \ldots, N,
  \\
  x_j:= \ & a + (j-1/2)\dx = a + y_j,
  & &&
       \mbox{for } & j = 1, \ldots, N,
\end{align*}
where $x_{j+1/2}$, $j=0, \ldots, N$, are the cells interfaces and
$x_j$, $j=1, \ldots, N$, the cells centres. Moreover, set
$N_T =\lfloor T/\dt \rfloor$ and, for $n =0, \ldots, N_T$ let
$t^n=n\dt$ be the time mesh. Set $\lambda=\dt / \dx$.

Approximate the initial datum $\rho_o$ and the boundary data as
follows:
\begin{align*}
  &\rho_j^0 := \frac{1}{\dx} \int_{x_{j-1/2}}^{x_{j+1/2}} \rho_o(x) \d{x},
    \qquad j=1,\ldots,N,
  \\
  &\rho_a^n
    := \frac{1}{\dt} \int_{t^n}^{t^{n+1}} \rho_a(t) \d{t},
    \quad
    \rho_b^n := \frac{1}{\dt} \int_{t^n}^{t^{n+1}} \rho_b(t) \d{t},
    \qquad n=0, \ldots, N_T -1.
\end{align*}
Introduce moreover the notation $\rho_0^n = \rho_a^n$ and
$\rho_{N+1}^n = \rho_b^n$. For $n =0, \ldots, N_T-1$, set
\begin{align}
  \nonumber
  & \omega^k := \omega (y_k) = \omega \left((k-1/2) \dx\right)
  &  \mbox{ for }
  & k \in \interi,
  \\
  \label{eq:5}
  &W_{j+1/2} := \dx \, \sum_{k=1}^N \omega^{k-j}
  &  \mbox{ for }
  & j=0, \ldots, N,
  \\
  \nonumber
  & \Rh{j+1/2} := \frac{\dx}{W_{j+1/2}} \,
    \displaystyle \sum_{k=1}^N \omega^{k-j} \, \rh{k}
  & \mbox{ for }
  & j= \  0,\ldots,N.
\end{align}

Introduce the following modified Lax-Friedrichs flux adapted to the
present setting: for $n =0, \ldots, N_T-1$ and $j=0, \ldots, N$,
\begin{equation}
  \label{eq:numFl}
  F^n_{j+1/2} (\rh{j}, \rh{j+1}) =
  \frac{1}{2}\!
  \left[
    f (t^n, x_{j+1/2}, \rh{j}, \Rh{j+1/2}) +  f (t^n, x_{j+1/2}, \rh{j+1}, \Rh{j+1/2})
    -  \alpha \!\left(\rh{j+1} - \rh{j} \right)\! \right] ,
\end{equation}
where $\alpha \geq 1$ is the viscosity coefficient.

We define a piecewise constant approximate solution $\rho_{\Delta}$
to~\eqref{eq:original} as
\begin{equation}
  \label{eq:rhodelta}
  \rho_{\Delta} (t,x) =  \rho^{n}_{j}
  \quad \mbox{ for } \quad
  \left\{
    \begin{array}{r@{\;}c@{\;}l}
      t & \in &[t^n, t^{n+1}[ \,,
      \\
      x & \in & [x_{j-1/2}, x_{j+1/2}[ \,,
    \end{array}
  \right.
  \quad \mbox{ where } \quad
  \begin{array}{r@{\;}c@{\;}l}
    n & = & 0, \ldots, N_T-1,
    \\
    j & = &  1,\ldots,N,
  \end{array}
\end{equation}
through the finite volume scheme
\begin{equation}
  \label{eq:scheme}
  \rho^{n+1}_{j} =
  \rho_j ^n - \lambda \left(F_{j+1/2}^n (\rh{j}, \rh{j+1}) - F_{j-1/2}^n (\rh{j-1}, \rh{j})\right).
\end{equation}

\begin{remark}
  Concerning the first formula in~\eqref{eq:5}, observe that a
  different (more accurate) choice for the approximation of the kernel
  function $\omega$ is possible: indeed, one may define
  \begin{displaymath}
    \omega^k =
    \frac{1}{\Delta x}\int_{(k-1)\Delta x}^{k\Delta x}\omega (y) \d{y},
  \end{displaymath}
  which ensures that
  $W_{j+1/2}=\dx \sum_{k=1}^N\omega^{k-j}=W(x_{j+1/2})$.  This choice
  wouldn't result in any relevant change in the estimates derived in
  this paper.
\end{remark}

\subsection{Positivity}
\label{sec:pos}

In the case of positive initial and boundary data, we prove that under
a suitable CFL condition the scheme~\eqref{eq:scheme} preserves the
positivity.

\begin{lemma}
  \label{lem:pos}
  Let $\rho_o \in \L\infty (\,]a,b[; \reali^+)$ and
  $\rho_a, \, \rho_b \in \L\infty (\reali^+; \reali^+)$. Let~\ref{f}
  and~\ref{omega} hold. Assume that
  \begin{align}
    \label{eq:CFL}
    \alpha \geq \
    & L,
    &
      \lambda \leq\frac13 \, \min\left\{\frac1\alpha, \frac{1}{2 \, L + C \, \dx}\right\}.
  \end{align}
  Then, for all $t>0$ and $x \in \, ]a,b[$, the piecewise constant
  approximate solution $\rho_\Delta$~\eqref{eq:rhodelta} is such that
  $\rho_\Delta (t,x)\geq 0$.
\end{lemma}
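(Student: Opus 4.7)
The plan is induction on the time level $n$. The base case is immediate: at $n=0$ the cell averages inherit non-negativity from $\rho_o\geq 0$, and the ghost values $\rho^n_0 = \rho^n_a$, $\rho^n_{N+1} = \rho^n_b$ are non-negative for every $n$ by the assumptions on the boundary data.

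For the inductive step, the goal is to exhibit $\rho^{n+1}_j$ as a linear combination of $\rh{j-1}$, $\rh{j}$, $\rh{j+1}$ with non-negative coefficients. The assumption $f(t,x,0,R)=0$ is exactly what is needed here: with $\Rh{j+1/2}$ frozen at its time-$n$ value, the flux $F^n_{j+1/2}(u,v)$ vanishes at $u=v=0$, so the fundamental theorem of calculus applied to $s\mapsto F^n_{j+1/2}(s\rh{j},s\rh{j+1})$ yields the factorisation
\begin{displaymath}
  F^n_{j+1/2}(\rh{j},\rh{j+1}) = \mathcal{A}_{j+1/2}\,\rh{j} + \mathcal{B}_{j+1/2}\,\rh{j+1},
\end{displaymath}
where $\mathcal{A}_{j+1/2}$ and $\mathcal{B}_{j+1/2}$ are the integral averages on $[0,1]$ of $\tfrac12\!\left[\partial_\rho f(t^n,x_{j+1/2},\,\cdot\,,\Rh{j+1/2})+\alpha\right]$ and $\tfrac12\!\left[\partial_\rho f(t^n,x_{j+1/2},\,\cdot\,,\Rh{j+1/2})-\alpha\right]$ respectively. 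The bound $\modulo{\partial_\rho f}<L$ together with $\alpha\geq L$ immediately gives $\mathcal{A}_{j+1/2}\geq 0$ and $\mathcal{B}_{j+1/2}\leq 0$.

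Plugging this factorisation into~\eqref{eq:scheme} rearranges the update as
\begin{displaymath}
  \rho^{n+1}_j = \lambda\,\mathcal{A}_{j-1/2}\,\rh{j-1} + \bigl[1-\lambda(\mathcal{A}_{j+1/2}-\mathcal{B}_{j-1/2})\bigr]\,\rh{j} - \lambda\,\mathcal{B}_{j+1/2}\,\rh{j+1}.
\end{displaymath}
The outer coefficients are non-negative by the signs of $\mathcal{A}$ and $\mathcal{B}$; the central one is non-negative as soon as $\lambda(\alpha+L)\leq 1$, which is comfortably ensured by $\lambda\,\alpha\leq 1/3$ together with $\alpha\geq L$, both contained in~\eqref{eq:CFL}. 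The induction hypothesis $\rh{k}\geq 0$ then closes the step.

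I do not foresee a genuine obstacle: the argument is purely algebraic, and the only care required is the bookkeeping of the three places in which $\rh{j}$ enters the scheme (via $F^n_{j+1/2}$, via $F^n_{j-1/2}$, and as the standalone term). It is worth noting that the sharper branch of~\eqref{eq:CFL} involving $2L+C\,\dx$ is not actually needed here, since positivity uses only the bound on $\partial_\rho f$ and the vanishing of $f$ at $\rho=0$; that stricter piece of the CFL is presumably reserved for the $\L\infty$ and $\BV$ estimates to be proved next, where the variation of $f$ in $x$ and in $R$ plays a non-trivial role.
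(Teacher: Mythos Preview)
Your argument is correct, and it takes a genuinely different route from the paper's. The paper does not factorise the numerical flux through $F^n_{j+1/2}(0,0)=0$; instead it adds and subtracts $F^n_{j\pm1/2}(\rh{j},\rh{j})$ to write
\[
  \rho^{n+1}_j = (1-\beta_j^n-\gamma_j^n)\,\rh{j} + \beta_j^n\,\rh{j-1} + \gamma_j^n\,\rh{j+1}
  - \lambda\bigl(F^n_{j+1/2}(\rh{j},\rh{j}) - F^n_{j-1/2}(\rh{j},\rh{j})\bigr),
\]
with $\beta_j^n,\gamma_j^n$ the standard incremental coefficients. The extra diagonal-flux difference does not vanish (it picks up the $x$- and $R$-dependence of $f$), so the paper must bound it by $(C\,\dx + 2L)\,\rh{j}$ and then invoke the full CFL branch $\lambda(2L+C\,\dx)\leq 1/3$. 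Your factorisation bypasses this entirely: since $F^n_{j+1/2}(0,0)=0$, the diagonal residue never appears, and you need only $\lambda(\alpha+L)\leq 1$. So your observation that the $2L+C\,\dx$ branch is unnecessary for positivity is correct for \emph{your} proof, but the paper's proof does use it. What the paper's decomposition buys is that the coefficients $\beta_j^n,\gamma_j^n$ (and a companion $\delta_j^n$) are set up once and reused verbatim in the $\L\infty$ and $\BV$ estimates; your cleaner route gives a self-contained positivity argument but does not feed into those later proofs.
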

\begin{proof}
  We closely follow~\cite[Lemma~1]{DFG}.  Fix $j$ between $1$ and $N$,
  $n$ between $0$ and $N_T -1$. Suppose that $\rh{j}\geq 0$ for all
  $j=1, \ldots,N$.  Rewrite~\eqref{eq:scheme} as follows:
  \begin{align*}
    \rho_j^{n+1} = \
    & \rh{j} - \lambda \left[
      F_{j+1/2}^n (\rh{j}, \rh{j+1}) \pm F_{j+1/2}^n (\rh{j}, \rh{j})
      \pm F_{j-1/2}^n (\rh{j}, \rh{j})- F_{j-1/2}^n (\rh{j-1}, \rh{j})
      \right]
    \\
    = \
    &
      (1 - \beta_j^n - \gamma_j^n) \, \rh{j} + \beta_j^n \, \rh{j-1} + \gamma_j^n \, \rh{j+1}
      - \lambda \left(F_{j+1/2}^n (\rh{j}, \rh{j})  - F_{j-1/2}^n (\rh{j}, \rh{j}) \right),
  \end{align*}
  with
  \begin{align}
    \label{eq:betajn}
    \beta_j^n = \
    &
      \begin{cases}
        \lambda \, \dfrac{F_{j-1/2}^n (\rh{j}, \rh{j})- F_{j-1/2}^n
          (\rh{j-1}, \rh{j})}{\rh{j}-\rh{j-1}} & \mbox{if } \rh{j}
        \neq \rh{j-1},
        \\
        0 & \mbox{if } \rh{j} = \rh{j-1},
      \end{cases}
    \\
    \label{eq:gammajn}
    \gamma_j^n = \
    &
      \begin{cases}
        - \lambda \, \dfrac{F_{j+1/2}^n (\rh{j}, \rh{j+1})-
          F_{j+1/2}^n (\rh{j}, \rh{j})}{\rh{j+1}-\rh{j}} & \mbox{if }
        \rh{j} \neq \rh{j+1},
        \\
        0 & \mbox{if } \rh{j} = \rh{j+1}.
      \end{cases}
  \end{align}
  Using the explicit expression of the numerical flux~\eqref{eq:numFl}
  and~\ref{f}, we obtain
  \begin{align*}
    & \modulo{F_{j+1/2}^n (\rh{j}, \rh{j})  - F_{j-1/2}^n (\rh{j}, \rh{j})}
    \\
    = \
    & \modulo{ f (t^n, x_{j+1/2}, \rh{j}, \Rh{j+1/2}) -  f (t^n, x_{j-1/2}, \rh{j}, \Rh{j-1/2})
      \pm f (t^n, x_{j-1/2}, \rh{j}, \Rh{j+1/2}) }
    \\
    \leq \
    &  \modulo{ f (t^n, x_{j+1/2}, \rh{j}, \Rh{j+1/2}) - f (t^n, x_{j-1/2}, \rh{j}, \Rh{j+1/2}) }
    \\
    &  + \modulo{f (t^n, x_{j-1/2}, \rh{j}, \Rh{j+1/2}) -  f (t^n, x_{j-1/2}, \rh{j}, \Rh{j-1/2})}
    \\
    \leq \
    & C \,\rh{j} \, \dx
      + \modulo{f (t^n, x_{j-1/2}, \rh{j}, \Rh{j+1/2}) - f (t^n, x_{j-1/2}, 0, \Rh{j+1/2}) }
    \\
    & + \modulo{ f (t^n, x_{j-1/2}, \rh{j}, \Rh{j-1/2}) -  f (t^n, x_{j-1/2}, 0, \Rh{j-1/2})}
    \\
    \leq \
    & C \, \rh{j} \, \dx + 2 \, L \, \rh{j}.
  \end{align*}
  Observe that, whenever $\rh{j} \neq \rh{j-1}$,
  \begin{align*}
    \beta_j^n =  \
    &\frac{\lambda}{2 \, (\rh{j}-\rh{j-1})}\left[
      f (t^n, x_{j-1/2}, \rh{j}, \Rh{j-1/2}) - f (t^n, x_{j-1/2}, \rh{j-1}, \Rh{j-1/2}) + \alpha \, (\rh{j}-\rh{j-1})
      \right]
    \\
    = \
    & \frac{\lambda}{2} \left(\partial_\rho f (t^n, x_{j-1/2}, r_{j-1/2}^n, \Rh{j-1/2}) + \alpha \right),
  \end{align*}
  with $r_{j-1/2}^n \in \mathcal{I}\left(\rh{j-1},
    \rh{j}\right)$. Similarly, whenever $\rh{j} \neq \rh{j+1}$,
  \begin{align*}
    \gamma_j^n =  \
    & -\frac{\lambda}{2 \, (\rh{j+1}-\rh{j})}\left[
      f (t^n, x_{j+1/2}, \rh{j+1}, \Rh{j+1/2}) - f (t^n, x_{j+1/2}, \rh{j}, \Rh{j+1/2}) - \alpha \, (\rh{j+1}-\rh{j})
      \right]
    \\
    = \
    & \frac{\lambda}{2} \left(\alpha - \partial_\rho f (t^n, x_{j+1/2}, r_{j+1/2}^n, \Rh{j+1/2}) \right),
  \end{align*}
  with $r_{j+1/2}^n \in \mathcal{I}\left(\rh{j}, \rh{j+1}\right)$.  By
  the conditions~\eqref{eq:CFL}, we get
  \begin{align*}
    \beta_j^n, \, \gamma_j^n \in
    & \left[0,\frac13\right],
    &
      (1-\beta_j^n - \gamma_j^n) \in
    & \left[\frac13, 1\right],
    &
      \lambda \, \modulo{F_{j+1/2}^n (\rh{j}, \rh{j})  - F_{j-1/2}^n (\rh{j}, \rh{j})} \leq
    & \frac13 \, \rh{j},
  \end{align*}
  which, using the inductive hypothesis, leads to
  \begin{displaymath}
    \rho_j^{n+1}
    \geq
    (1-\beta_j^n - \gamma_j^n) \rh{j}
    + \beta_j^n \, \rh{j-1} + \gamma_j^n \, \rh{j+1} - \frac13 \, \rh{j}
    \geq
    \frac13 \, \rh{j} - \frac13 \, \rh{j}
    \geq  0.
  \end{displaymath}
\end{proof}

\subsection{\texorpdfstring{$\L1$}{L 1} bound}
\label{sec:l1}

\begin{lemma}
  \label{lem:l1}
  Let $\rho_o \in \L\infty (\,]a,b[; \reali^+)$ and
  $\rho_a, \, \rho_b \in \L\infty (\reali^+; \reali^+)$. Let~\ref{f},
  \ref{omega} and~\eqref{eq:CFL} hold.  Then, for all $t>0$,
  $\rho_\Delta$ in~\eqref{eq:rhodelta} satisfies
  \begin{equation}
    \label{eq:l1}
    \norma{\rho_\Delta (t, \cdot)}_{\L1 (]a,b[)}
    \leq \mathcal{C}_1 (t),
  \end{equation}
  where
  \begin{equation}
    \label{eq:C1}
    \mathcal{C}_1 (t)=
    \norma{\rho_o}_{\L1 (]a,b[)} + \alpha \, \left(\norma{\rho_a}_{\L1 ([0,t])} + \norma{\rho_b}_{\L1 ([0,t])}\right).
  \end{equation}
\end{lemma}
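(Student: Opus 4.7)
The proof plan is to sum the scheme~\eqref{eq:scheme} over $j=1,\dots,N$ and exploit the positivity from Lemma~\ref{lem:pos}, which makes $\norma{\rho_\Delta(t^n,\cdot)}_{\L1(]a,b[)}=\dx\sum_{j=1}^N\rh{j}$. Because every interior numerical flux appears twice with opposite signs, the telescoping leaves only the two boundary contributions, so that
\begin{displaymath}
\norma{\rho_\Delta(t^{n+1},\cdot)}_{\L1(]a,b[)} = \norma{\rho_\Delta(t^n,\cdot)}_{\L1(]a,b[)} - \dt\,F_{N+1/2}^n(\rh{N},\rho_b^n) + \dt\,F_{1/2}^n(\rho_a^n,\rh{1}).
\end{displaymath}

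The heart of the argument, and essentially the only non-routine step, will be to control each boundary flux in terms of the boundary datum alone, modulo a nonpositive remainder supported at the adjacent boundary cell. Inserting the explicit formula~\eqref{eq:numFl} and using from~\ref{f} that $f(t,x,0,R)=0$ together with the global Lipschitz bound $\modulo{\partial_\rho f}<L$, I get $\modulo{f(t^n,x_{j\pm 1/2},\rho,\cdot)}\leq L\,\modulo{\rho}$. A direct manipulation then yields
\begin{displaymath}
F_{1/2}^n(\rho_a^n,\rh{1})\leq \frac{L+\alpha}{2}\rho_a^n+\frac{L-\alpha}{2}\rh{1},\qquad
-F_{N+1/2}^n(\rh{N},\rho_b^n)\leq \frac{L+\alpha}{2}\rho_b^n+\frac{L-\alpha}{2}\rh{N}.
\end{displaymath}
This is precisely where the CFL condition~\eqref{eq:CFL} enters: the assumption $\alpha\geq L$ makes the coefficient $(L-\alpha)/2$ nonpositive, so the nonnegative (by Lemma~\ref{lem:pos}) boundary cell values $\rh{1}$ and $\rh{N}$ can be dropped, and $(L+\alpha)/2\leq \alpha$ produces the clean inequality $-F_{N+1/2}^n+F_{1/2}^n\leq \alpha(\rho_a^n+\rho_b^n)$.

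Feeding this into the identity above and iterating in $n$, a plain induction gives
\begin{displaymath}
\dx\sum_{j=1}^N\rho_j^{n+1}\leq \dx\sum_{j=1}^N\rho_j^0+\alpha\,\dt\sum_{k=0}^n(\rho_a^k+\rho_b^k).
\end{displaymath}
The right-hand side matches $\mathcal{C}_1(t^{n+1})$ once the Riemann-type sums are identified with $\norma{\rho_a}_{\L1([0,t^{n+1}])}$ and $\norma{\rho_b}_{\L1([0,t^{n+1}])}$ via the definition of the discretised boundary data, and once one observes that $\dx\sum_{j=1}^N\rho_j^0\leq\norma{\rho_o}_{\L1(]a,b[)}$ from the cell-average formula together with $\rho_o\geq 0$. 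Since $\mathcal{C}_1$ is monotone nondecreasing in $t$, the estimate extends trivially from the grid times to all $t\in[t^n,t^{n+1}[$. The only truly delicate point is the sign analysis of the boundary-flux bounds; everything else is routine telescoping and induction.
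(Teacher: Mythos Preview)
Your proof is correct and follows essentially the same route as the paper's: sum the scheme, telescope to the two boundary fluxes, use $f(t,x,0,R)=0$ with $|\partial_\rho f|<L$ to bound each boundary flux, invoke $\alpha\geq L$ to discard the nonpositive contributions from $\rh{1}$ and $\rh{N}$, and iterate. The only cosmetic difference is that the paper writes each $f$ term via the mean value theorem as $\partial_\rho f(\cdot)\,\rho$ and groups coefficients, whereas you pass directly through the Lipschitz bound $|f|\leq L\rho$; the resulting inequalities and the use of~\eqref{eq:CFL} are identical.
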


\begin{proof}
  By Lemma~\ref{lem:l1}, we know that the scheme~\eqref{eq:scheme}
  preserves the positivity. Therefore, for $n=0, \ldots, N_T-1$,
  compute
  \begin{align*}
    \|\rho^{n+1}\|_{\L1 (]a,b[)} = \
    & \dx \, \sum_{j=1}^N \rho_j^{n+1}
    \\
    = \
    & \dx \sum_{j=1}^N  \left[
      \rh{j} - \lambda \left(F^n_{j+1/2} (\rh{j}, \rh{j+1}) - F^n_{j-1/2} (\rh{j-1}, \rh{j})\right)
      \right]
    \\
    = \
    & \dx \sum_{j=1}^N \rh{j} - \lambda \, \dx \, \left(
      F^n_{N+1/2} (\rh{N}, \rh{N+1}) - F^n_{1/2} (\rh{0}, \rh{1})
      \right)
    \\
    = \
    & \| \rho^n\|_{\L1 ([a.b])}
    \\
    & - \frac{\dt}{2}\!
      \left[
      f (t^n, x_{N+1/2}, \rh{N}, \Rh{N+1/2}) \!+ \!  f (t^n, x_{N+1/2}, \rh{b}, \Rh{N+1/2})
      - \alpha\!  \left(\rh{b} - \rh{N}\right)
      \right]
    \\
    & + \frac{\dt}{2}\,
      \left(
      f (t^n, x_{1/2}, \rh{a}, \Rh{1/2}) + f (t^n, x_{1/2}, \rh{1}, \Rh{1/2})
      - \alpha  \left(\rh{1} - \rh{a}\right)
      \right)
    \\
    = \
    &\| \rho^n\|_{\L1 ([a.b])}
      + \frac{\dt}{2}\, \left(
      - \alpha - \partial_\rho f (t^n, x_{N+ 1/2}, r^n_{N,0}, \Rh{N+1/2})
      \right)  \rh{N}
    \\
    & + \frac{\dt}{2}\, \left(
      \alpha - \partial_\rho f (t^n, x_{N+ 1/2}, r^n_{b,0}, \Rh{N+1/2})
      \right)  \rh{b}
    \\
    & + \frac{\dt}{2}\, \left(
      \alpha + \partial_\rho f (t^n, x_{1/2}, r^n_{a,0}, \Rh{1/2})
      \right)  \rh{a}
    \\
    & + \frac{\dt}{2}\, \left(
      - \alpha + \partial_\rho f (t^n, x_{1/2}, r^n_{1,0}, \Rh{1/2})
      \right)  \rh{1},
  \end{align*}
  where $r_{N,0}^n \in \mathcal{I}\left(0, \rh{N}\right)$,
  $r_{b,0}^n \in \mathcal{I}\left(0, \rh{b}\right)$,
  $r_{a,0}^n \in \mathcal{I}\left(0, \rh{a}\right)$ and
  $r_{1,0}^n \in \mathcal{I}\left(0, \rh{1}\right)$. By~\ref{f} and
  the assumption~\eqref{eq:CFL} on $\alpha$, the coefficients of
  $\rh{N}$ and $\rh{1}$ are negative. Thus
  \begin{displaymath}
    \|\rho^{n+1}\|_{\L1 (]a,b[)}
    \leq
    \| \rho^n\|_{\L1 (]a.b[)} + \alpha \, \dt  \left(\rh{a} + \rh{b}\right).
  \end{displaymath}
  An iterative argument yields the thesis.
\end{proof}

\subsection{\texorpdfstring{$\L\infty$}{L infinity} bound}
\label{sec:linf}

\begin{lemma}
  \label{lem:linf}
  Let $\rho_o \in \L\infty (\,]a,b[; \reali^+)$ and
  $\rho_a, \, \rho_b \in \L\infty (\reali^+; \reali^+)$. Let~\ref{f},
  \ref{omega} and~\eqref{eq:CFL} hold.  Then, for all $t>0$,
  $\rho_\Delta$ in~\eqref{eq:rhodelta} satisfies
  \begin{equation}
    \label{eq:linf}
    \norma{\rho_\Delta (t, \cdot)}_{\L\infty (]a,b[)}
    \leq  \max\left\{\norma{\rho_o}_{\L\infty (]a,b[)}, \, \norma{\rho_a}_{\L\infty ([0,t])}, \, \norma{\rho_b}_{\L\infty ([0,t])}\right\}\, e^{\mathcal{C}_2 (t)  t},
  \end{equation}
  where $\mathcal{C}_2 (t)$ is given by~\eqref{eq:C2}.
\end{lemma}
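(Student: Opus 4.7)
The approach parallels Lemma~\ref{lem:pos}. I would rewrite the update~\eqref{eq:scheme} as
\begin{displaymath}
  \rho_j^{n+1} = (1 - \beta_j^n - \gamma_j^n)\,\rh{j} + \beta_j^n \,\rh{j-1} + \gamma_j^n \,\rh{j+1} - \lambda\left(\ff_{j+1/2}(\rh{j},\rh{j}) - \ff_{j-1/2}(\rh{j},\rh{j})\right),
\end{displaymath}
with $\beta_j^n, \gamma_j^n$ from~\eqref{eq:betajn}--\eqref{eq:gammajn}. Under~\eqref{eq:CFL}, the first three coefficients are non-negative and sum to $1$, so this portion is a convex combination of neighbouring values. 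The remaining ``source'' term encodes the dependence of $f$ on $x$ and on the non-local quantity $R$, and the heart of the proof is to bound it by a constant multiple of $\dt\,\rh{j}$.

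Since $f(t,x,0,R)=0$ and $\modulo{\partial_x f}, \modulo{\partial_R f} \leq C\,\modulo\rho$ by~\ref{f}, the explicit form~\eqref{eq:numFl} yields
\begin{displaymath}
  \modulo{\ff_{j+1/2}(\rh{j},\rh{j}) - \ff_{j-1/2}(\rh{j},\rh{j})}
  = \modulo{f(t^n,x_{j+1/2},\rh{j},\Rh{j+1/2}) - f(t^n,x_{j-1/2},\rh{j},\Rh{j-1/2})}
  \leq C\,\rh{j}\left(\dx + \modulo{\Rh{j+1/2} - \Rh{j-1/2}}\right).
\end{displaymath}
To control the discrete non-local difference, I would split
\begin{displaymath}
  \Rh{j+1/2} - \Rh{j-1/2} = \frac{\dx}{W_{j+1/2}}\sum_{k=1}^N (\omega^{k-j} - \omega^{k-j+1})\,\rh{k} + \Rh{j-1/2}\,\frac{W_{j-1/2} - W_{j+1/2}}{W_{j+1/2}}
\end{displaymath}
and estimate each term using $\modulo{\omega^{k-j} - \omega^{k-j+1}} \leq \norma{\omega'}_{\L\infty}\dx$, $\modulo{W_{j-1/2}-W_{j+1/2}} \leq (b-a)\,\norma{\omega'}_{\L\infty}\dx$, and the lower bound $W_{j+1/2}\geq K_\omega$ from~\ref{omega}. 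This gives $\modulo{\Rh{j+1/2} - \Rh{j-1/2}} \leq \mathcal{L}\,\dx\,\norma{\rho^n}_{\L1(]a,b[)}$ with $\mathcal{L}$ as in~\eqref{eq:L}. Plugging in the $\L1$ bound from Lemma~\ref{lem:l1} then bounds the source by $\mathcal{C}_2(t^n)\,\dt\,\rh{j}$ with $\mathcal{C}_2(t) = C\,(1 + \mathcal{L}\,\mathcal{C}_1(t))$.

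Combining the convex-combination piece with the source bound produces
\begin{displaymath}
  \rho_j^{n+1} \leq \left(1 + \mathcal{C}_2(t^n)\,\dt\right)\max\{\rh{j-1},\rh{j},\rh{j+1}\}.
\end{displaymath}
Since the ghost cells $\rh{0}=\rho_a^n$ and $\rh{N+1}=\rho_b^n$ enter only at $j=1,N$, a straightforward induction on $n$ yields
\begin{displaymath}
  \max_{1\leq j\leq N}\rh{j} \leq \max\!\left\{\norma{\rho_o}_{\L\infty(]a,b[)},\,\norma{\rho_a}_{\L\infty([0,t^n])},\,\norma{\rho_b}_{\L\infty([0,t^n])}\right\}\prod_{m=0}^{n-1}\left(1+\mathcal{C}_2(t^m)\,\dt\right);
\end{displaymath}
monotonicity of $\mathcal{C}_2$ in $t$ together with $1+x\leq e^x$ finally produces~\eqref{eq:linf}.

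The main obstacle is the Lipschitz-in-$x$ estimate on $\Rh{j+1/2}$: one must handle simultaneously the telescoping of the kernel values in the numerator and the variation of the discrete weight $W_{j+1/2}$ in the denominator. This is precisely the step that introduces the constant $\mathcal{L}$ and makes the $\L\infty$ bound depend linearly on the already-established $\L1$ estimate through $\mathcal{C}_1(t)$, as reflected in the formula for $\mathcal{R}_\infty$ in Theorem~\ref{thm:main}.
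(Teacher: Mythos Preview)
Your proof is correct and follows essentially the same route as the paper: the same convex-combination rewriting via $\beta_j^n,\gamma_j^n$, the same mean-value bound on $F^n_{j+1/2}(\rh{j},\rh{j})-F^n_{j-1/2}(\rh{j},\rh{j})$, and the same appeal to Lemma~\ref{lem:l1} to close the estimate. The one discrepancy is in the constant: your bound $\modulo{W_{j-1/2}-W_{j+1/2}}\leq (b-a)\,\norma{\omega'}_{\L\infty}\,\dx$ leads to $\dfrac{\norma{\omega'}_{\L\infty}}{K_\omega}+\dfrac{(b-a)\,\norma{\omega}_{\L\infty}\norma{\omega'}_{\L\infty}}{K_\omega^2}$ rather than the paper's $\mathcal{L}$ in~\eqref{eq:L}, which has $\norma{\omega'}_{\L1}$ in the second summand; to recover the stated $\mathcal{C}_2(t)$ exactly, bound $\sum_{k=1}^N\modulo{\omega^{k-j+1}-\omega^{k-j}}=\sum_{k=1}^N\bigl|\int_{y_{k-j}}^{y_{k-j+1}}\omega'(y)\d{y}\bigr|\leq\norma{\omega'}_{\L1}$ instead of using $N\,\dx\,\norma{\omega'}_{\L\infty}$.
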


\begin{proof}
  Fix $n$ between $0$ and $N_T-1$. For $j=1, \ldots N$,
  rearrange~\eqref{eq:scheme} as in Lemma~\ref{lem:pos}, with the
  notation~\eqref{eq:betajn}--\eqref{eq:gammajn}:
  \begin{equation}
    \label{eq:1}
    \rho_j^{n+1} =
    (1 - \beta_j^n - \gamma_j^n) \, \rh{j} + \beta_j^n \, \rh{j-1} + \gamma_j^n \, \rh{j+1}
    - \lambda \left(F_{j+1/2}^n (\rh{j}, \rh{j})  - F_{j-1/2}^n (\rh{j}, \rh{j}) \right).
  \end{equation}
  Compute
  \begin{align*}
    &  \modulo{F_{j+1/2}^n (\rh{j}, \rh{j})  - F_{j-1/2}^n (\rh{j}, \rh{j}) }
    \\
    = \
    & \modulo{ f (t^n, x_{j+1/2}, \rh{j}, \Rh{j+1/2}) -  f (t^n, x_{j-1/2}, \rh{j}, \Rh{j-1/2})
      \pm f (t^n, x_{j-1/2}, \rh{j}, \Rh{j+1/2})}
    \\
    \leq \
    & \modulo{\partial_x f (t^n, \tilde{x}_j, \rh{j}, \Rh{j+1/2})}  |x_{j+1/2} - x_{j-1/2}|
      +
      \modulo{\partial_R f (t^n, x_{j-1/2}, \rh{j}, \tilde{R}^n_{j})} \modulo{\Rh{j+1/2} - \Rh{j-1/2}} .
  \end{align*}
  By~\eqref{eq:5}, we have
  \begin{align*}
    &\modulo{\Rh{j+1/2} - \Rh{j-1/2}}
    \\
    =\
    &
      \modulo{\frac{\dx}{W_{j+1/2}} \left( \sum_{k=1}^N  \omega^{k-j} \, \rh{k}\right)
      -
      \frac{\dx}{W_{j-1/2}} \left( \sum_{k=1}^N  \omega^{k-j+1} \, \rh{k}\right)
      }
    \\
    \leq \
    & \frac{\dx}{|W_{j+1/2}|} \sum_{k=1}^N |\omega^{k-j} - \omega^{k-j+1}| \, \rh{k}
      + \dx \, \left( \sum_{k=1}^N  |\omega^{k-j+1}| \, \rh{k}\right)
      \modulo{\frac{1}{W_{j+1/2}} - \frac{1}{W_{j-1/2}}}
    \\
    \leq \
    & \frac{\dx} {K_\omega}  \, \sum_{k=1}^N \rh{k} \, \modulo{\int_{y_{k-j}}^{y_{k-j+1}} \omega' (y) \d{y}}
      + \frac{\dx^2} {K_\omega^2} \, \norma{\omega}_{\L\infty (\R)} \left(\sum_{k=1}^N \rh{k}\right)
      \modulo{\sum_{k=1}^N (\omega^{k-j+1} - \omega^{k-j})}
    \\
    \leq \
    & \frac{\dx}{K_\omega} \, \norma{\omega'}_{\L\infty (\R)}\, \norma{\rho^n}_{\L1 (]a,b[)}
      + \frac{\dx}{K_\omega^2} \,  \norma{\omega}_{\L\infty (\R)} \, \norma{\rho^n}_{\L1 (]a,b[)}
      \, \sum_{k=1}^N  \modulo{\int_{y_{k-j}}^{y_{k-j+1}} \omega' (y) \d{y}}
    \\
    \leq \
    &
      \frac{\dx}{K_\omega} \, \norma{\omega'}_{\L\infty (\R)}\, \norma{\rho^n}_{\L1 (]a,b[)}
      + \frac{\dx}{K_\omega^2} \,  \norma{\omega}_{\L\infty (\R)} \, \norma{\omega'}_{\L1 (\R)}
      \, \norma{\rho^n}_{\L1 (]a,b[)}
    \\
    \leq \
    & \dx \, \left(
      \frac{ \norma{\omega'}_{\L\infty (\R)}}{K_\omega} +
      \frac{ \norma{\omega}_{\L\infty (\R)} \norma{\omega'}_{\L1 (\R)}}{K_\omega^2}
      \right)
      \mathcal{C}_1 (t^n),
  \end{align*}
  where $\mathcal{C}_1 (t)$ is defined in~\eqref{eq:C1}. Setting
  \begin{equation}
    \label{eq:L}
    \mathcal{L} =
    \left(  \frac{ \norma{\omega'}_{\L\infty (\R)}}{K_\omega}
      + \frac{ \norma{\omega}_{\L\infty (\R)} \norma{\omega'}_{\L1 (\R)}}{K_\omega^2}
    \right),
  \end{equation}
  we obtain
  \begin{displaymath}
    \modulo{F_{j+1/2}^n (\rh{j}, \rh{j})  - F_{j-1/2}^n (\rh{j}, \rh{j}) }
    \leq
    C \, \dx \, \modulo{\rh{n}}
    \left( 1 + \mathcal{L} \, \mathcal{C}_1 (t^n) \right).
  \end{displaymath}
  Inserting the above estimate into~\eqref{eq:1} and exploiting the
  bounds on $\beta_j^n$ and $\gamma_j^n$ obtained in the proof of
  Lemma~\ref{lem:pos}, we get
  \begin{align*}
    \rho_j^{n+1} \leq \
    &
      (1 - \beta_j^n - \gamma_j^n) \, \rh{j} + \beta_j^n \, \rh{j-1} + \gamma_j^n \, \rh{j+1}
      + \lambda \modulo{F_{j+1/2}^n (\rh{j}, \rh{j})  - F_{j-1/2}^n (\rh{j}, \rh{j})}
    \\
    \leq \
    &  (1 - \beta_j^n - \gamma_j^n)  \norma{\rho^n}_{\L\infty (]a,b[)}
      + \beta_j^n \, \max\!\left\{\norma{\rho^n}_{\L\infty (]a,b[)}, \, \rho_a^n \right\}
      + \gamma_j^n \,  \max\!\left\{\norma{\rho^n}_{\L\infty (]a,b[)}, \,\rho_b^n\right\}
    \\
    & + \lambda  \, \dx \, C
      (1+\mathcal{L} \, \mathcal{C}_1 (t^n))
      \norma{\rho^n}_{\L\infty (]a,b[)}
    \\
    \leq \
    & \max\left\{\norma{\rho^n}_{\L\infty (]a,b[)}, \, \rho_a^n , \, \rho_b^n\right\}
      \left(1 + \dt \, C
      \left(1 + \mathcal{L} \, \mathcal{C}_1 (t^n)\right)\right)
    \\
    \leq \
    &
      e^{\mathcal{C}_2 (t^n) \dt}
      \max\left\{\norma{\rho^n}_{\L\infty (]a,b[)}, \, \rho_a^n , \, \rho_b^n\right\},
  \end{align*}
  where
  \begin{equation}
    \label{eq:C2}
    \mathcal{C}_2 (t) = C (1 + \mathcal{L} \, \mathcal{C}_1 (t)),
  \end{equation}
  $\mathcal{L}$ being as in~\eqref{eq:L}. An iterative argument,
  together with the fact that
  $\mathcal{C}_2 (t^{n-1}) \leq \mathcal{C}_2 (t^n)$ for all
  $n=1, \ldots, N_T$, yields the thesis.
\end{proof}

\subsection{BV estimates}
\label{sec:bv}

\begin{proposition}\label{prop:BV} {\bf ($\BV$ estimate in space)}
  Let $\rho_o \in \BV (\,]a,b[; \reali^+)$,
  $\rho_a, \, \rho_b \in \BV (\reali^+; \reali^+)$. Let~\ref{f},
  \ref{omega} and~\eqref{eq:CFL} hold.  Then, for all
  $n=1, \ldots, N_T$. the following estimate holds
  \begin{equation}
    \label{eq:spaceBV}
    \sum_{j=0}^{N}
    \modulo{\rho^{n}_{j+1}-\rho^{n}_{j}} \leq
    \mathcal{C}_x (t^n),
  \end{equation}
  where
  \begin{equation}
    \label{eq:11}
    \begin{aligned}
      \mathcal{C}_x (t^n) = \ & e^{\mathcal{K}_1 (t^n) \, t^n} \left[
        \sum_{j=0}^N \modulo{\rho_{j+1}^{0} -\rho_j^{0}} +
        \sum_{m=1}^n \modulo{\rho_{a}^{m} -\rho_a^{m-1}} +
        \sum_{m=1}^n \modulo{\rho_{b}^{m} -\rho_b^{m-1}} \right]
      \\
      & + \frac{\mathcal{K}_4 (t^n)}{\mathcal{K}_1 (t^n)} \, \left(
        e^{\mathcal{K}_1 (t^n) \, t^n} - 1 \right),
    \end{aligned}
  \end{equation}
  with $\mathcal{K}_1 (t^n)$ and $\mathcal{K}_4 (t^n)$ are defined in
  \eqref{eq:K123} and~\eqref{eq:K4}.
\end{proposition}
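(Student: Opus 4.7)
The plan is to derive a one-step discrete evolution inequality of the form
$$\sum_{j=0}^{N} |\rho^{n+1}_{j+1} - \rho^{n+1}_j| \leq (1 + \dt\,\mathcal{K}_1(t^{n+1}))\Bigl(\sum_{j=0}^N |\rho^n_{j+1} - \rho^n_j| + |\rho^{n+1}_a - \rho^n_a| + |\rho^{n+1}_b - \rho^n_b|\Bigr) + \dt\,\mathcal{K}_4(t^{n+1}),$$
then iterate via a discrete Gronwall argument. Summing the boundary increments $|\rho^{m}_a - \rho^{m-1}_a|$ and $|\rho^{m}_b - \rho^{m-1}_b|$ over $m=1,\dots,n$ produces the bracketed term in \eqref{eq:11}, while monotonicity of $\mathcal{K}_1$ and $\mathcal{K}_4$ in $t$ together with the standard closed form for $a_{n+1} \leq (1+c\,\dt)\,a_n + d\,\dt$ yields the exponential/source structure displayed there.

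Setting $\Delta^n_j := \rho^n_{j+1}-\rho^n_j$, from \eqref{eq:scheme} one computes
$$\Delta^{n+1}_j = \Delta^n_j - \lambda\bigl[F^n_{j+3/2}(\rho^n_{j+1},\rho^n_{j+2}) - 2F^n_{j+1/2}(\rho^n_j,\rho^n_{j+1}) + F^n_{j-1/2}(\rho^n_{j-1},\rho^n_j)\bigr].$$
I would split each flux difference by inserting and subtracting intermediate terms so as to separate the purely \emph{local} contribution (depending only on $\partial_\rho f$ and on the state increments) from the \emph{geometric} contribution (depending on $\partial_x f$, $\partial_R f$, and on the spatial variation of the quantities $R^n_{j\pm 1/2}$). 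For the local part, a mean value argument on $\partial_\rho f$ produces, exactly as in the proof of Lemma~\ref{lem:pos}, coefficients $\beta,\gamma \in [0,1/3]$ with $1-\beta-\gamma \in [1/3,1]$ multiplying $\Delta^n_{j-1}$, $\Delta^n_{j+1}$ and $\Delta^n_j$; summing over $j$ and applying the triangle inequality yields a clean convex-combination bound by $\sum_j |\Delta^n_j|$. For the geometric part, assumption~\ref{f} provides $|\partial_x f|, |\partial_R f| \leq C|\rho|$ together with the analogous bounds on the second derivatives, so what remains to estimate is the second-difference $R^n_{j+3/2} - 2R^n_{j+1/2} + R^n_{j-1/2}$. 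A direct computation from \eqref{eq:5}, analogous to the first-difference estimate carried out in the proof of Lemma~\ref{lem:linf} but pushed one order further, uses the $\W21$ and $\W2\infty$ regularity of $\omega$ in assumption~\ref{omega}, the lower bound $K_\omega$ on $W$, and the $\L1$ bound $\mathcal{C}_1(t^n)$ from Lemma~\ref{lem:l1}.

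After summation over $j$, the geometric contributions split into a coefficient times $\sum_j |\Delta^n_j|$ (whose factor is collected into $\mathcal{K}_1$) and a pure source term controlled by $\|\rho^n_\Delta\|_{\L\infty} \leq \mathcal{R}_\infty(t^n)$ via Lemma~\ref{lem:linf} (collected into $\mathcal{K}_4$). The boundary indices $j=0$ and $j=N$ need separate treatment: when unfolding $\Delta^{n+1}_0 = \rho^{n+1}_1 - \rho^{n+1}_a$ and $\Delta^{n+1}_N = \rho^{n+1}_b - \rho^{n+1}_N$ via the scheme, the one-step data increments $|\rho^{n+1}_a - \rho^n_a|$ and $|\rho^{n+1}_b - \rho^n_b|$ appear naturally (using the definition $\rho^n_0 = \rho^n_a$, $\rho^n_{N+1}=\rho^n_b$), accounting for the boundary-variation terms in \eqref{eq:11}. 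The CFL condition~\eqref{eq:CFL} is used to guarantee the $[0,1/3]$ range on the $\beta,\gamma$ coefficients.

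The main obstacle is the careful accounting of the second difference of the non-local term $R^n_{j+1/2}$: after the $\lambda = \dt/\dx$ prefactor one needs this second difference to be at worst $O(\dx^2)$, otherwise the resulting contribution to $\mathcal{K}_1$ would blow up as $\dx \to 0$. This is exactly what forces the use of the full $\W2\infty$ regularity of $\omega$ in~\ref{omega}, and the bookkeeping separating "coefficient of $\sum_j |\Delta^n_j|$" from "pure source", so that each piece depends only on $\|\rho^n_\Delta\|_{\L1}$ and $\|\rho^n_\Delta\|_{\L\infty}$ for which we have $\dx$-uniform bounds.
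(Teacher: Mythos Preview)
Your proposal is correct and follows essentially the same route as the paper's proof: decompose each $\Delta^{n+1}_j$ into a local part and a geometric drift, control the second difference $R^n_{j+3/2}-2R^n_{j+1/2}+R^n_{j-1/2}$ as $O(\dx^2)$ using the $\W21\cap\W2\infty$ regularity of $\omega$ (the paper's \eqref{eq:diffdiffOK}), treat $j=0$ and $j=N$ separately to pick up the boundary-data increments $|\rho_a^{n+1}-\rho_a^n|$, $|\rho_b^{n+1}-\rho_b^n|$, and finish with discrete Gronwall. One small caveat in your description of the local part: it is not literally a pointwise convex combination---the coefficient of $\Delta^n_j$ carries shifted indices (it is $1-\gamma_j^n-\delta_{j+1}^n$ in the paper's notation, with a new $\delta_j^n$ defined at interface $j+1/2$ rather than the $\beta_j^n$ of Lemma~\ref{lem:pos}), so the bound by $\sum_j|\Delta^n_j|$ comes from a telescoping after re-indexing (see \eqref{eq:Ajn}) rather than a direct convex estimate at each $j$, but the outcome is identical.
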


\begin{remark}
  Estimate~\eqref{eq:spaceBV} is defined also for $n=0$, setting
  $\sum_{m=1}^0 a_m = 0$, with some abuse of notation.
\end{remark}

\begin{proof}
  Consider the inner terms and the boundary ones separately.

  For $j=1, \ldots, N-1$ and $n =0, \ldots, N_T -1$, focus on the
  difference $ \rho^{n+1}_{j+1} - \rho^{n+1}_j$,
  exploiting~\eqref{eq:scheme}:
  \begin{align*}
    &\rho^{n+1}_{j+1} - \rho^{n+1}_j
    \\
    = \
    & \rh{j+1} - \rh{j}
    \\
    & - \lambda \left[
      F^n_{j+3/2} (\rh{j+1}, \rh{j+2}) -  F^n_{j+1/2} (\rh{j}, \rh{j+1})
      -  F^n_{j+1/2} (\rh{j}, \rh{j+1}) +  F^n_{j-1/2} (\rh{j-1}, \rh{j})
      \right]
    \\
    & \pm \lambda \, F^n_{j+3/2} (\rh{j}, \rh{j+1}) \pm \, \lambda F^n_{j+1/2} (\rh{j-1}, \rh{j})
    \\
    = \
    & \rh{j+1} - \rh{j}
    \\
    & - \lambda \left[
      F^n_{j+3/2} (\rh{j+1}, \rh{j+2})  - F^n_{j+1/2} (\rh{j}, \rh{j+1})
      + F^n_{j+1/2} (\rh{j-1}, \rh{j}) -F^n_{j+3/2} (\rh{j}, \rh{j+1})
      \right]
    \\
    & - \lambda \left[
      F^n_{j+3/2} (\rh{j}, \rh{j+1})  -F^n_{j+1/2} (\rh{j-1}, \rh{j})
      +F^n_{j-1/2} (\rh{j-1}, \rh{j}) - F^n_{j+1/2} (\rh{j}, \rh{j+1})
      \right]
    \\
    = \
    & \mathcal{A}_j^n - \lambda \, \mathcal{B}_j^n,
  \end{align*}
  where we set
  \begin{align*}
    \mathcal{A}_j^n = \
    &\rh{j+1} - \rh{j}
    \\
    & - \lambda \left[
      F^n_{j+3/2} (\rh{j+1}, \rh{j+2})  - F^n_{j+1/2} (\rh{j}, \rh{j+1})
      + F^n_{j+1/2} (\rh{j-1}, \rh{j}) -F^n_{j+3/2} (\rh{j}, \rh{j+1})
      \right],
    \\
    \mathcal{B}_j^n = \
    & F^n_{j+3/2} (\rh{j}, \rh{j+1})  -F^n_{j+1/2} (\rh{j-1}, \rh{j})
      +F^n_{j-1/2} (\rh{j-1}, \rh{j}) - F^n_{j+1/2} (\rh{j}, \rh{j+1}).
  \end{align*}
  Rearrange $\mathcal{A}_j^n$ as follows:
  \begin{align*}
    \mathcal{A}_j^n = \
    & \rh{j+1} - \rh{j}
      - \lambda \,
      \frac{F^n_{j+3/2} (\rh{j+1}, \rh{j+2}) - F^n_{j+3/2} (\rh{j+1}, \rh{j+1})}{\rh{j+2}- \rh{j+1}} \,
      \left(\rh{j+2}- \rh{j+1}\right)
    \\
    &- \lambda \,
      \frac{F^n_{j+3/2} (\rh{j+1}, \rh{j+1}) - F^n_{j+3/2} (\rh{j}, \rh{j+1})}{\rh{j+1}- \rh{j}} \,
      \left(\rh{j+1}- \rh{j}\right)
    \\
    & + \lambda \,
      \frac{F^n_{j+1/2} (\rh{j}, \rh{j+1}) - F^n_{j+1/2} (\rh{j}, \rh{j})}{\rh{j+1}- \rh{j}} \,
      \left(\rh{j+1}- \rh{j}\right)
    \\
    &   + \lambda \,
      \frac{F^n_{j+1/2} (\rh{j}, \rh{j}) - F^n_{j+1/2} (\rh{j-1}, \rh{j})}{\rh{j}- \rh{j-1}} \,
      \left(\rh{j}- \rh{j-1}\right)
    \\
    = \
    & \delta_j^n \, (\rh{j} - \rh{j-1}) + \gamma_{j+1}^n \, (\rh{j+2} - \rh{j+1})
      + (1 - \gamma_j^n - \delta_{j+1}^n) (\rh{j+1} - \rh{j}),
  \end{align*}
  where
  \begin{equation}
    \label{eq:deltajn}
    \delta_j^n =
    \begin{cases}
      \lambda \dfrac{F_{j+1/2}^n (\rh{j}, \rh{j})- F_{j+1/2}^n
        (\rh{j-1}, \rh{j})}{\rh{j}-\rh{j-1}} & \mbox{if } \rh{j} \neq
      \rh{j-1},
      \\
      0 & \mbox{if } \rh{j} = \rh{j-1},
    \end{cases}
  \end{equation}
  while $\gamma_j^n$ is as in~\eqref{eq:gammajn}. It can be proven
  that $\delta_j^n \in \left[0, 1/3 \right]$. Thus,
  \begin{align}
    \nonumber
    &  \sum_{j=1}^{N-1} \modulo{\mathcal{A}_j^n}
    \\ \nonumber
    \leq \
    &  \sum_{j=1}^{N-1} \modulo{\rh{j+1} - \rh{j}}
      + \sum_{j=0}^{N-2} \delta_{j+1}^n \, \modulo{\rh{j+1} - \rh{j}}
      - \sum_{j=1}^{N-1} \delta_{j+1}^n \, \modulo{\rh{j+1} - \rh{j}}
    \\ \nonumber
    &  + \sum_{j=2}^{N}\gamma_j^n \, \modulo{\rh{j+1} - \rh{j}}
      - \sum_{j=1}^{N-1} \gamma_j^n \, \modulo{\rh{j+1} - \rh{j}}
    \\ \label{eq:Ajn}
    = \
    & \sum_{j=1}^{N-1} \modulo{\rh{j+1} - \rh{j}}
      + \delta_1^n  \, \modulo{\rh{1} - \rh{a}} - \delta_N^n \, \modulo{\rh{N}- \rh{N-1}}
      + \gamma^n_N \, \modulo{\rh{b} - \rh{N}} - \gamma_1^n \, \modulo{\rh{2} - \rh{1}}.
  \end{align}
  Focus now on $\mathcal{B}_j^n$:
  \begin{align*}
    \mathcal{B}_j^n = \
    & \frac12 \left[
      f (t^n,x_{j+3/2}, \rh{j}, \Rh{j+3/2} ) + f (t^n,x_{j+3/2}, \rh{j+1}, \Rh{j+3/2} )
      \right.
    \\
    & - f (t^n,x_{j+1/2}, \rh{j}, \Rh{j+1/2} ) - f (t^n,x_{j+1/2}, \rh{j+1}, \Rh{j+1/2} )
    \\
    & + f (t^n,x_{j-1/2}, \rh{j-1}, \Rh{j-1/2} ) + f (t^n,x_{j-1/2}, \rh{j}, \Rh{j-1/2} )
    \\
    & \left.  - f (t^n,x_{j+1/2}, \rh{j-1}, \Rh{j+1/2} ) - f (t^n,x_{j+1/2}, \rh{j}, \Rh{j+1/2} )
      \right]
    \\
    = \
    & \frac12 \left[
      f (t^n,x_{j+3/2}, \rh{j+1}, \Rh{j+3/2} ) -  f (t^n,x_{j+1/2}, \rh{j+1}, \Rh{j+1/2} )
      \right]
    \\
    & + \frac12 \left[
      f (t^n,x_{j-1/2}, \rh{j-1}, \Rh{j-1/2} ) - f (t^n,x_{j+1/2}, \rh{j-1}, \Rh{j+1/2} )
      \right]
    \\
    & + \frac12 \left[
      f (t^n,x_{j+3/2}, \rh{j}, \Rh{j+3/2} ) -2 \,  f (t^n,x_{j+1/2}, \rh{j}, \Rh{j+1/2} )
      + f (t^n,x_{j-1/2}, \rh{j}, \Rh{j-1/2} )
      \right]
    \\
    = \
    & \frac12 \left[
      \partial_R f (t^n,x_{j+3/2}, \rh{j+1}, \tilde{R}^n_{j+1} ) \, (\Rh{j+3/2} - \Rh{j+1/2})
      + \dx \, \partial_x f (t^n,\tilde{x}_{j+1}, \rh{j+1}, \Rh{j+1/2} )
      \right]
    \\
    & -  \frac12 \left[
      \partial_R f (t^n,x_{j-1/2}, \rh{j-1}, \tilde{R}^n_{j} ) \, (\Rh{j+1/2} - \Rh{j-1/2})
      + \dx \, \partial_x f (t^n,\tilde{x}_{j}, \rh{j-1}, \Rh{j+1/2} )
      \right]
    \\
    & + \frac12 \left[
      f (t^n,x_{j+3/2}, \rh{j}, \Rh{j+1/2} ) -2 \,  f (t^n,x_{j+1/2}, \rh{j}, \Rh{j+1/2} )
      + f (t^n,x_{j-1/2}, \rh{j}, \Rh{j+1/2} )
      \right]
    \\
    & + \frac12 \left[
      f (t^n,x_{j+3/2}, \rh{j}, \Rh{j+3/2} )  -  f (t^n,x_{j+3/2}, \rh{j}, \Rh{j+1/2} )
      \right]
    \\
    & + \frac12\left[
      f (t^n,x_{j-1/2}, \rh{j}, \Rh{j-1/2}) -  f (t^n,x_{j-1/2}, \rh{j}, \Rh{j+1/2})
      \right]
    \\
    = \
    & \frac{\dx}{2}\left[
      (\tilde{x}_{j+1} - \tilde{x}_j) \, \partial_{xx}^2 f (t^n,\hat{x}_{j+1/2}, \rh{j+1}, \Rh{j+1/2} )
      + \partial_{\rho x}^2  f (t^n,\tilde{x}_{j}, \tilde{\rho}^n_{j}, \Rh{j+1/2} ) \, (\rh{j+1} - \rh{j-1})
      \right]
    \\
    & + \frac12 \left[
      2 \, \dx \, \partial_{xR}^2 f (t^n,\check{x}_{j+1/2}, \rh{j+1}, \tilde{R}^n_{j+1} )
      \, (\Rh{j+3/2} - \Rh{j+1/2}) \right.
    \\
    & \qquad\quad
      +  \partial_{\rho R}^2 f (t^n,x_{j-1/2}, \bar{\rho}^n_{j}, \tilde{R}^n_{j+1} )
      \, (\Rh{j+3/2} - \Rh{j+1/2}) \, (\rh{j+1}- \rh{j-1})
    \\
    & \qquad\quad
      + \partial_{RR}^2 f (t^n,x_{j-1/2}, \rh{j-1}, \hat{R}^n_{j+1/2} )
      \, (\Rh{j+3/2} - \Rh{j+1/2}) \, ( \tilde{R}^n_{j+1} - \tilde{R}^n_{j})
    \\
    & \qquad\quad\left.
      +  \partial_R f (t^n,x_{j-1/2}, \rh{j-1}, \tilde{R}^n_{j} )
      \, (\Rh{j+3/2} - \Rh{j+1/2} - \Rh{j+1/2} + \Rh{j-1/2})
      \right]
    \\
    & + \frac{\dx}{2}\left[
      \partial_x f (t^n, \bar{x}_{j+1}, \rh{j}, \Rh{j+1/2})
      -  \partial_x f (t^n, \bar{x}_{j}, \rh{j}, \Rh{j+1/2})
      \right]
    \\
    & + \frac12 \left[
      \partial_R f (t^n, x_{j+3/2}, \rh{j}, \bar{R}^n_{j+1}) \, (\Rh{j+3/2}- \Rh{j+1/2})\right.
    \\
    & \quad\qquad \left.
      -  \partial_R f (t^n, x_{j-1/2}, \rh{j}, \bar{R}^n_{j}) \, (\Rh{j+1/2}- \Rh{j-1/2})
      \right]
    \\
    = \
    &  \frac{\dx}{2}\left[
      (\tilde{x}_{j+1} - \tilde{x}_j) \, \partial_{xx}^2 f (t^n,\hat{x}_{j+1/2}, \rh{j+1}, \Rh{j+1/2} )
      + \partial_{\rho x}^2  f (t^n,\tilde{x}_{j}, \tilde{\rho}^n_{j}, \Rh{j+1/2} ) \, (\rh{j+1} - \rh{j-1})
      \right]
    \\
    & + \frac12 \left[
      2 \, \dx \, \partial_{xR}^2 f (t^n,\check{x}_{j+1/2}, \rh{j+1}, \tilde{R}^n_{j+1} )
      \, (\Rh{j+3/2} - \Rh{j+1/2}) \right.
    \\
    & \qquad\quad
      +  \partial_{\rho R}^2 f (t^n,x_{j-1/2}, \bar{\rho}^n_{j}, \tilde{R}^n_{j+1} )
      \, (\Rh{j+3/2} - \Rh{j+1/2}) \, (\rh{j+1}- \rh{j-1})
    \\
    & \qquad\quad
      + \partial_{RR}^2 f (t^n,x_{j-1/2}, \rh{j-1}, \hat{R}^n_{j+1/2} )
      \, (\Rh{j+3/2} - \Rh{j+1/2}) \, ( \tilde{R}^n_{j+1} - \tilde{R}^n_{j})
    \\
    & \qquad\quad\left.
      +  \partial_R f (t^n,x_{j-1/2}, \rh{j-1}, \tilde{R}^n_{j} )
      \, (\Rh{j+3/2} - \Rh{j+1/2} - \Rh{j+1/2} + \Rh{j-1/2})
      \right]
    \\
    & + \frac{\dx}{2} \,  (\bar{x}_{j+1} - \bar{x}_j) \,
      \partial_{xx}^2  f (t^n, \bar{x}_{j+1/2}, \rh{j}, \Rh{j+1/2})
    \\
    & + \frac12\left[
      2 \, \dx \, \partial_{xR}^2 f (t^n,\overline{\overline{x}}_{j+1/2}, \rh{j}, \bar{R}^n_{j+1} )
      \, (\Rh{j+3/2} - \Rh{j+1/2}) \right.
    \\
    & \qquad\quad
      +  \partial_{RR}^2 f (t^n,x_{j-1/2}, \rh{j}, \bar{\bar{R}}^n_{j+1/2} )
      \, (\Rh{j+3/2} - \Rh{j+1/2}) \, ( \bar{R}^n_{j+1} - \bar{R}^n_{j})
    \\
    &\qquad \quad \left.
      +  \partial_R f (t^n,x_{j-1/2}, \rh{j}, \bar{R}^n_{j} )
      \, (\Rh{j+3/2} - \Rh{j+1/2} - \Rh{j+1/2} + \Rh{j-1/2})
      \right],
  \end{align*}
  where
  \begin{align*}
    \tilde{R}^n_{j+1}, \bar{R}^n_{j+1} \in
    &  \mathcal{I}\left(\Rh{j+1/2}, \Rh{j+3/2}\right),
    &
      \tilde{x}_{j+1}, \bar{x}_{j+1} \in
    & \, ]x_{j+1/2}, x_{j+3/2}[,
    \\
    \tilde{R}^n_{j}, \bar{R}^n_{j} \in
    & \mathcal{I}\left(\Rh{j-1/2}, \Rh{j+1/2}\right),
    &
      \tilde{x}_{j}, \bar{x}_{j} \in
    & \,]x_{j-1/2}, x_{j+1/2}[,
    \\
    \hat{R}^n_{j+1/2} \in
    & \mathcal{I}\left(\tilde{R}^n_{j}, \tilde{R}^n_{j+1}\right),
    &
      \hat{x}_{j+1/2} \in
    &\, ]\tilde{x}_{j}, \tilde{x}_{j+1}[,
    \\
    \bar{\bar{R}}^n_{j+1/2} \in
    &  \mathcal{I}\left( \bar{R}^n_{j}, \bar{R}^n_{j+1}\right),
    &
      \check{x}_{j+1/2}, \overline{\overline{x}}_{j+1/2} \in
    &\, ]x_{j-1/2},  x_{j+3/2}[,
    \\
    \tilde{\rho}^n_{j}, \bar{\rho}^n_{j} \in
    & \mathcal{I}\left(\rh{j-1}, \rh{j+1}\right),
    &
      \bar{x}_{j+1/2} \in
    &\, ]\bar{x}_j, \bar{x}_{j+1}[.
  \end{align*}
  Notice that
  \begin{align*}
    \modulo{\tilde{x}_{j+1} - \tilde{x}_j} \leq \
    2 \, \dx,
    \quad
    \modulo{\bar{x}_{j+1} - \bar{x}_j} \leq \
    2 \, \dx,
    \quad
    \modulo{\Rh{j+3/2} - \Rh{j+1/2}} \leq \
    \mathcal{L} \, \mathcal{C}_1 (t^n) \, \dx,
  \end{align*}
  where $\mathcal{L}$ is as in~\eqref{eq:L}.  Moreover, by their very
  definition, for $\theta^n_{j+1}, \epsilon^n_{j} \in [0,1]$,
  \begin{align*}
    \modulo{ \tilde{R}^n_{j+1} - \tilde{R}^n_{j}} = \
    & \modulo{\theta^n_{j+1} \, \Rh{j+3/2} + (1-\theta^n_{j+1}) \, \Rh{j+1/2}
      - \epsilon^n_{j} \, \Rh{j+1/2} - (1-\epsilon^n_{j}) \, \Rh{j-1/2}}
    \\
    \leq \
    & \modulo{\Rh{j+3/2} - \Rh{j+1/2}} +  \modulo{\Rh{j+1/2} - \Rh{j-1/2}}
    \\
    \leq \
    &2\, \mathcal{L} \, \mathcal{C}_1 (t^n) \, \dx,
  \end{align*}
  and similarly
  $\modulo{ \bar{R}^n_{j+1} - \bar{R}^n_{j}} \leq 2 \, \mathcal{L} \,
  \mathcal{C}_1 (t^n) \, \dx$. Compute now
  \begin{equation}
    \label{eq:diffdiff}
    \!\! \Rh{j+3/2} - 2 \,  \Rh{j+1/2}+ \Rh{j-1/2}
    = \dx \! \left[
      \sum_{k=1}^N \rh{k} \left(
        \frac{\omega^{k-j-1}}{W_{j+3/2}}
        -  \frac{\omega^{k-j}}{W_{j+1/2}} -  \frac{\omega^{k-j}}{W_{j+1/2}}
        + \frac{\omega^{k-j+1}}{W_{j-1/2}}
      \right) \!
    \right].
  \end{equation}
  Observe that, for $k$ fixed:
  \begin{align}
    \nonumber
    &\frac{\omega^{k-j-1}}{W_{j+3/2}}  -  \frac{\omega^{k-j}}{W_{j+1/2}}
    \\ \nonumber
    = \
    & \frac{\dx}{W_{j+3/2} \, W_{j+1/2}} \left[\sum_{\ell=1}^N
      \left(
      \omega^{k-j-1} \, \omega^{\ell-j} - \omega^{k-j} \, \omega^{\ell - j -1}
      \right)
      \right]
    \\ \nonumber
    = \
    & \frac{\dx}{W_{j+3/2} \, W_{j+1/2}} \left[\sum_{\ell=1}^N
      \left(
      \omega^{k-j-1} \left(\omega^{\ell-j} - \omega^{\ell - j -1}\right)
      + \left(\omega^{k-j-1} - \omega^{k-j} \right) \omega^{\ell - j -1}
      \right)
      \right]
    \\ \label{eq:2}
    = \
    &  \frac{(\dx)^2}{W_{j+3/2} \, W_{j+1/2}} \left[\sum_{\ell=1}^N
      \left(
      \omega^{k-j-1}\, \omega'(\xi_{\ell-j-1/2})
      -  \omega^{\ell - j -1} \, \omega' (\xi_{k-j-1/2})
      \right)
      \right],
  \end{align}
  where $\xi_{\ell-j-1/2} \in\, ]y_{\ell-j-1}, y_{\ell-j}[$,
  $\xi_{k-j-1/2} \in\, ]y_{k-j-1}, y_{k-j}[$, and similarly
  \begin{equation}
    \label{eq:3}
    \frac{\omega^{k-j}}{W_{j+1/2}}  -  \frac{\omega^{k-j+1}}{W_{j-1/2}}
    =
    \frac{(\dx)^2}{W_{j+1/2} \, W_{j-1/2}}  \left[\sum_{\ell=1}^N
      \left(
        \omega^{k-j}\, \omega'(\xi_{\ell-j+1/2})
        -  \omega^{\ell - j } \, \omega' (\xi_{k-j+1/2})
      \right)
    \right].
  \end{equation}
  In order to compute the difference between~\eqref{eq:2}
  and~\eqref{eq:3}, which appears in~\eqref{eq:diffdiff}, we add and
  subtract the term
  \begin{displaymath}
    \frac{(\dx)^2}{W_{j+3/2} \, W_{j+1/2}} \left[\sum_{\ell=1}^N
      \left( \omega^{k-j}\, \omega'(\xi_{\ell-j+1/2}) - \omega^{\ell - j} \, \omega' (\xi_{k-j+1/2})
      \right) \right].
  \end{displaymath}
  The terms with common denominator yield
  \begin{align}
    \nonumber
    & \frac{(\dx)^2}{W_{j+3/2} \, W_{j+1/2}}
      \left[\sum_{\ell=1}^N
      \left(
      \omega^{k-j-1}\, \omega'(\xi_{\ell-j-1/2})
      -  \omega^{\ell - j -1} \, \omega' (\xi_{k-j-1/2})
      \right. \right.
    \\ \nonumber
    & \quad\qquad\qquad\qquad\qquad \left.-
      \omega^{k-j}\, \omega'(\xi_{\ell-j+1/2})
      +  \omega^{\ell - j } \, \omega' (\xi_{k-j+1/2})
      \right)\Biggr]
    \\ \nonumber
    = \
    &  \frac{(\dx)^2}{W_{j+3/2} \, W_{j+1/2}}
      \left[\sum_{\ell=1}^N
      \left(
      (\omega^{k-j-1} - \omega^{k-j}) \,  \omega'(\xi_{\ell-j-1/2})
      +  \omega^{k-j} \, (\omega'(\xi_{\ell-j-1/2}) - \omega'(\xi_{\ell-j+1/2}))
      \right.\right.
    \\ \nonumber
    & \qquad\qquad
      \left.   -  (\omega^{\ell - j -1} - \omega^{\ell-j}) \, \omega' (\xi_{k-j-1/2})
      -  \omega^{\ell - j } \, (\omega' (\xi_{k-j-1/2}) - \omega' (\xi_{k-j+1/2}))
      \right)\Biggr]
    \\ \nonumber
    = \
    & \frac{(\dx)^2}{W_{j+3/2} \, W_{j+1/2}}
      \left[\sum_{\ell=1}^N
      \left(-\dx \, \omega' (\xi_{k-j-1/2}) \,  \omega'(\xi_{\ell-j-1/2})
      -   \omega^{k-j} \int_{\xi_{\ell-j-1/2}}^{\xi_{\ell-j+1/2}} \omega'' (y) \d{y}
      \right. \right.
    \\ \nonumber
    & \qquad\qquad\qquad\qquad
      \left.
      + \dx \, \omega' (\xi_{\ell-j-1/2}) \, \omega' (\xi_{k-j-1/2})
      +   \omega^{\ell - j } \int_{\xi_{k-j-1/2}}^{\xi_{k-j+1/2}} \omega'' (y) \d{y}
      \right)\Biggr]
    \\ \label{eq:4}
    = \
    & \frac{(\dx)^2}{W_{j+3/2} \, W_{j+1/2}}
      \left[\sum_{\ell=1}^N\left(
      \omega^{\ell - j } \int_{\xi_{k-j-1/2}}^{\xi_{k-j+1/2}} \omega'' (y) \d{y}
      - \omega^{k-j} \int_{\xi_{\ell-j-1/2}}^{\xi_{\ell-j+1/2}} \omega'' (y) \d{y}
      \right)\right].
  \end{align}
  We are left with
  \begin{equation}
    \label{eq:6}
    \!\!\left[\sum_{\ell=1}^N
      \left( \omega^{k-j}\, \omega'(\xi_{\ell-j+1/2}) - \omega^{\ell - j} \, \omega' (\xi_{k-j+1/2})
      \right) \right] \!\left(
      \frac{(\dx)^2}{W_{j+3/2} \, W_{j+1/2}} -     \frac{(\dx)^2}{W_{j+1/2} \, W_{j-1/2}}
    \right).
  \end{equation}
  In particular, observe that
  \begin{align}
    \nonumber
    \frac{1}{W_{j+3/2} \, W_{j+1/2}} -     \frac{1}{W_{j+1/2} \, W_{j-1/2}} = \
    & \frac{ W_{j-1/2} - W_{j+3/2}}{W_{j+3/2} \, W_{j+1/2} \, W_{j-1/2}}
    \\ \nonumber
    = \
    & \frac{\dx}{W_{j+3/2} \, W_{j+1/2} \, W_{j-1/2}}
      \left(\sum_{\beta=1}^N\left(
      \omega^{\beta-j+1} - \omega^{\beta-j-1}
      \right)\right)
    \\ \nonumber
    = \
    &  \frac{\dx}{W_{j+3/2} \, W_{j+1/2} \, W_{j-1/2}}
      \left(\sum_{\beta=1}^N \int_{y_{\beta-j-1}}^{y_{\beta-j+1}}
      \omega' (y) \d{y}
      \right)
    \\  \label{eq:6b}
    \leq \
    & \frac{2 \, \dx \, \norma{\omega'}_{\L1}}{W_{j+3/2} \, W_{j+1/2} \, W_{j-1/2}}.
  \end{align}
  Coming back to~\eqref{eq:diffdiff}, exploiting \eqref{eq:4},
  \eqref{eq:6} and~\eqref{eq:6b}, we get
  \begin{align}
    \nonumber
    & \modulo{\Rh{j+3/2} - 2 \,  \Rh{j+1/2}+ \Rh{j-1/2}}
    \\ \nonumber
    \leq \
    &\frac{(\dx)^3}{\modulo{W_{j+3/2} \, W_{j+1/2}}} \left|
      \left(\sum_{k=1}^N \rh{k} \,  \int_{\xi_{k-j-1/2}}^{\xi_{k-j+1/2}} \omega'' (y) \d{y}\right) \!
      \left( \sum_{\ell=1}^N \omega^{\ell - j }\right)\right.
    \\ \nonumber
    & \qquad\qquad\qquad \qquad\left.
      - \left(\sum_{k=1}^N \rh{k} \, \omega^{k-j}
      \right)\!
      \left( \sum_{\ell=1}^N \int_{\xi_{\ell-j-1/2}}^{\zeta_{\ell-j+1/2}} \omega'' (y) \d{y} \right)\right|
    \\ \nonumber
    & + \frac{2 \, (\dx)^4 \, \norma{\omega'}_{\L1}}
      {\modulo{W_{j+3/2} \, W_{j+1/2} \, W_{j-1/2}}}
      \left|
      \left(\sum_{k=1}^N \rh{k} \, \omega^{k-j} \right)
      \left( \sum_{\ell=1}^N \omega' (\xi_{\ell-j+1/2}) \right)
      \right.
    \\ \nonumber
    & \qquad\qquad\qquad \qquad\qquad\qquad\qquad\qquad\left.
      -  \left(\sum_{k=1}^N \rh{k} \, \omega'(\xi_{k-j+1/2}) \right)
      \left( \sum_{\ell=1}^N \omega^{\ell-j} \right)
      \right|
    \\ \nonumber
    \leq \
    & \frac{2 \,(\dx)^2}{K_\omega} \,  \norma{\omega''}_{\L\infty} \norma{\rho^n}_{\L1}
      +
      \frac{(\dx)^2}{K_\omega^2} \, \norma{\omega}_{\L\infty}  \norma{\omega''}_{\L1} \norma{\rho^n}_{\L1}
    \\ \nonumber
    & + \frac{2 \, (\dx)^2}{K_\omega^3} \norma{\omega}_{\L\infty} \norma{\omega'}^2_{\L1}
      \norma{\rho^n}_{\L1}
      +
      \frac{2 \, (\dx)^2}{K_\omega^2} \norma{\omega'}_{\L\infty} \norma{\omega'}_{\L1}
      \norma{\rho^n}_{\L1}
    \\ \label{eq:diffdiffOK}
    = \
    &\dx^2 \, \mathcal{W} \,  \norma{\rho^n}_{\L1},
  \end{align}
  where we set
  \begin{equation}
    \label{eq:Wconst}
    \mathcal{W} =
    \frac{2}{K_\omega} \, \norma{\omega''}_{\L\infty}
    +
    \frac{1}{K_\omega^2} \, \norma{\omega}_{\L\infty} \norma{\omega''}_{\L1}
    +  \frac{2}{K_\omega^3} \, \norma{\omega}_{\L\infty} \norma{\omega'}^2_{\L1}
    +
    \frac{2}{K_\omega^2} \, \norma{\omega'}_{\L\infty} \norma{\omega'}_{\L1}.
  \end{equation}
  Hence,
  \begin{align*}
    \modulo{\mathcal{B}^n_j} \leq \
    & (\dx)^2 \, C \, \modulo{\rh{j+1}}
      +
      \frac{\dx}2 \, \norma{\partial_{\rho x}^2 f}_{\L\infty} \modulo{\rh{j+1} - \rh{j-1}}
      +
      (\dx) ^2 \, C \,  \mathcal{L} \, \mathcal{C}_1 (t^n) \, \modulo{\rh{j+1}}
    \\
    & +
      \frac{\dx}{2} \, \mathcal{L}\, \mathcal{C}_1 (t^n) \,
      \norma{\partial_{\rho R}^2 f}_{\L\infty} \modulo{\rh{j+1} - \rh{j-1}}
      +
      (\dx)^2 \, C \, \mathcal{L}^2\, (\mathcal{C}_1 (t^n))^2 \, \modulo{\rh{j-1}}
      +
      (\dx)^2 \, C \, \modulo{\rh{j}}
    \\
    & +
      (\dx)^2 \, C \, \mathcal{L}\, \mathcal{C}_1 (t^n) \, \modulo{\rh{j}}
      +
      (\dx)^2 \, C \, \mathcal{L}^2\, (\mathcal{C}_1 (t^n))^2 \, \modulo{\rh{j}}
    \\
    & +  \frac12 \,
      (\dx)^2 \, C \, \mathcal{W} \, \mathcal{C}_1 (t^n) \,
      \left(\modulo{\rh{j-1}} + \modulo{\rh{j}}\right).
  \end{align*}
  Therefore,
  \begin{align}
    \nonumber
    \sum_{j=1}^{N-1} \lambda \, \modulo{\mathcal{B}_j^n}
    \leq \
    & \frac{\dt}{2} \, \left(
      \norma{\partial_{\rho x}^2 f}_{\L\infty}
      +  \mathcal{L} \, \mathcal{C}_1 (t^n) \, \norma{\partial_{\rho R}^2 f}_{\L\infty}
      \right)
      \sum_{j=1}^{N-1}  \modulo{\rh{j+1} - \rh{j-1}}
    \\
    \nonumber
    & + \dt \, C \left(
      1 + 2 \, \mathcal{L} \, \mathcal{C}_1 (t^n)
      + 2 \, \mathcal{L}^2 \, (\mathcal{C}_1 (t^n))^2
      +  \mathcal{W} \, \mathcal{C}_1 (t^n)
      \right) \, \dx \, \sum_{j=1}^N \modulo{\rh{j}}
    \\
    \nonumber
    & + \dt \, \dx \, C \left( \mathcal{L}^2 \, (\mathcal{C}_1 (t^n))^2
      + \frac{1}{2} \, \mathcal{W} \, \mathcal{C}_1 (t^n)\right) \modulo{\rh{a}}
    \\  \label{eq:Bjn}
    \leq \
    & \mathcal{K}_1 (t^n) \, \dt
      \left(\sum_{j=1}^{N-1} \modulo{\rh{j+1} - \rh{j}}
      + \frac12 \, \modulo{\rh{1} - \rh{a}}\right)
      + \mathcal{K}_2 (t^n) \, \dt \,
      + \mathcal{K}_3 (t^n) \, \dt \, \modulo{\rh{a}},
  \end{align}
  with
  \begin{align}
    \nonumber
    \mathcal{K}_1 (t) = \
    &  \norma{\partial_{\rho x}^2 f}_{\L\infty}
      +  \mathcal{L}\, \mathcal{C}_1 (t) \, \norma{\partial_{\rho R}^2 f}_{\L\infty},
    \\[3pt]
    \label{eq:K123}
    \mathcal{K}_2 (t) = \
    & C \, \mathcal{C}_1 (t) \, \left(
      1 + 2 \, \mathcal{L}\, \mathcal{C}_1 (t)  +2 \, \mathcal{K}_3 (t)
      \right),
    \\ \nonumber
    \mathcal{K}_3 (t) = \
    & C \, \mathcal{C}_1 (t) \left(\mathcal{L}^2 \, \mathcal{C}_1 (t)
      + \frac{1}{2} \,  \mathcal{W} \, \right),
  \end{align}
  $\mathcal{C}_1 (t)$ is as in~\eqref{eq:C1} and $\mathcal{L}$ is as
  in~\eqref{eq:L}. Observe that the two norms of $f$ appearing in
  $\mathcal{K}_1(t)$ are bounded due to~\ref{f}, Lemma~\ref{lem:l1}
  and Lemma~\ref{lem:linf}, since they are evaluated on the compact
  set
  $[0,t] \times [a,b] \times [-\norma{\rho(t)}_{\L\infty},
  \norma{\rho(t)}_{\L\infty}] \times [-J(t), J(t)]$, with
  $J(t) = \dfrac{\norma{\omega}_{\L\infty}}{K_\omega} \,
  \mathcal{C}_1(t)$.

  Focus now on the boundary terms. From the definition of the
  scheme~\eqref{eq:scheme}, with the
  notation~\eqref{eq:betajn}--\eqref{eq:gammajn}
  and~\eqref{eq:deltajn}, we have
  \begin{align*}
    & \rho_1^{n+1} - \rho_a^{n+1}
    \\
    = \
    & (1 - \beta_1^n - \gamma_1^n) \rh{1} + \beta_1^n \, \rh{a} + \gamma_1^n \, \rh{2}
      - \lambda \left[
      F_{3/2}^n (\rh{1}, \rh{1}) - F_{1/2}^n (\rh{1}, \rh{1})
      \right] - \rho_a^{n+1} \pm \rh{a}
    \\
    =\
    & \gamma_1^n (\rh{2} - \rh{1}) + (1-\beta_1^n) (\rh{1}- \rh{a}) + (\rho_a^{n+1} - \rh{a})
      - \lambda \left[
      F_{3/2}^n (\rh{1}, \rh{1}) - F_{1/2}^n (\rh{1}, \rh{1})
      \right]
    \\
    = \
    & \gamma_1^n (\rh{2} - \rh{1}) + (1-\delta_1^n) (\rh{1}- \rh{a}) + (\rho_a^{n+1} - \rh{a})
      - \lambda \left[
      F_{3/2}^n (\rh{a}, \rh{1}) - F_{1/2}^n (\rh{a}, \rh{1})
      \right],
  \end{align*}
  since
  \begin{align*}
    &\beta_1^n \,  (\rh{1}- \rh{a})
      +  \lambda \left[
      F_{3/2}^n (\rh{1}, \rh{1}) - F_{1/2}^n (\rh{1}, \rh{1})
      \right]
    \\
    = \
    & \lambda \left[
      F_{1/2}^n (\rh{1}, \rh{1}) - F_{1/2}^n (\rh{a}, \rh{1})
      + F_{3/2}^n (\rh{1}, \rh{1}) - F_{1/2}^n (\rh{1}, \rh{1})
      \pm F_{3/2}^n (\rh{a}, \rh{1})
      \right]
    \\
    = \
    & \delta_1^n \, (\rh{1} - \rh{a}) + \lambda \left[
      F_{3/2}^n (\rh{a}, \rh{1}) - F_{1/2}^n (\rh{a}, \rh{1})
      \right].
  \end{align*}
  Observing that
  \begin{align*}
    & \lambda \left[
      F_{3/2}^n (\rh{a}, \rh{1}) - F_{1/2}^n (\rh{a}, \rh{1})
      \right]
    \\
    = \
    & \frac{\lambda}{2}\left[
      f (t^n, x_{3/2}, \rh{a}, \Rh{3/2}) + f (t^n, x_{3/2}, \rh{1}, \Rh{3/2})
      - f (t^n, x_{1/2}, \rh{a}, \Rh{1/2}) - f (t^n, x_{1/2}, \rh{1}, \Rh{1/2})
      \right]
    \\
    = \
    & \frac{\lambda}{2}\left[
      \partial_x f (t^n, \tilde{x}_{1}, \rh{a}, \Rh{3/2}) \, \dx
      + \partial_R f (t^n, x_{1/2}, \rh{a}, \tilde{R}^n_{1}) \, (\Rh{3/2} - \Rh{1/2})
      \right.
    \\
    & \qquad\left.
      + \partial_x f (t^n, \overline{x}_{1}, \rh{1}, \Rh{3/2}) \, \dx
      + \partial_R f (t^n, x_{1/2}, \rh{1}, \bar{R}^n_{1}) \, (\Rh{3/2} - \Rh{1/2})
      \right],
  \end{align*}
  where $\tilde{x}_1, \overline{x}_1 \in\, ]x_{1/2}, x_{3/2}[$ and
  $\tilde{R}^n_1, \bar{R}^n_1 \in \mathcal{I} (\Rh{1/2}, \Rh{3/2})$,
  we conclude
  \begin{displaymath}
    \lambda\, \modulo{ F_{3/2}^n (\rh{a}, \rh{1}) - F_{1/2}^n (\rh{a}, \rh{1})  }
    \leq
    \dt \, \frac{C}{2} \, (1 + \mathcal{L} \, \mathcal{C}_1 (t^n)) \, (|\rh{a}| + |\rh{1}|).
  \end{displaymath}
  By the positivity of the coefficients involved, we obtain
  \begin{equation}
    \label{eq:diff1a}
    \begin{aligned}
      \modulo{\rho_1^{n+1} - \rho_a^{n+1}} \leq \ & \gamma_1^n \,
      |\rh{2} - \rh{1}| + (1-\delta_1^n) \, |\rh{1}- \rh{a}| +
      |\rho_a^{n+1} - \rh{a}|
      \\
      & + \dt \, \frac{C}{2} \, (1 + \mathcal{L}\, \mathcal{C}_1
      (t^n)) \, (|\rh{a}| + |\rh{1}|).
    \end{aligned}
  \end{equation}
  Concerning the other boundary term, we have
  \begin{align*}
    \rho_b^{n+1} - \rho_N^{n+1} = \
    &  \rho_b^{n+1} \pm \rh{b}
      - (1 - \beta_N^n - \gamma_N^n) \rh{N} + \beta_N^n \, \rh{N-1} + \gamma_N^n \, \rh{b}
    \\
    & + \lambda \left[
      F_{N+1/2}^n (\rh{N}, \rh{N}) - F_{N-1/2}^n (\rh{N}, \rh{N})
      \right]
    \\ =\
    & (\rho_b^{n+1} - \rh{b}) +
      (1 - \gamma_N^n) (\rh{b} - \rh{N}) + \beta_N^n (\rh{N}- \rh{N-1})
    \\
    &+ \lambda \left[
      F_{N+1/2}^n (\rh{N}, \rh{N}) - F_{N-1/2}^n (\rh{N}, \rh{N})
      \right]
    \\
    = \
    &   (\rho_b^{n+1} - \rh{b}) +
      (1 - \gamma_N^n) (\rh{b} - \rh{N}) + \delta_N^n (\rh{N}- \rh{N-1})
    \\
    &+ \lambda \left[
      F_{N+1/2}^n (\rh{N-1}, \rh{N}) - F_{N-1/2}^n (\rh{N-1}, \rh{N})
      \right],
  \end{align*}
  since
  \begin{align*}
    &\beta_N^n (\rh{N}- \rh{N-1})
      + \lambda \left[
      F_{N+1/2}^n (\rh{N}, \rh{N}) - F_{N-1/2}^n (\rh{N}, \rh{N})
      \right]
    \\
    = \
    & \lambda \left[
      F_{N-1/2}^n (\rh{N}, \rh{N}) - F_{N-1/2}^n (\rh{N-1}, \rh{N})
      + F_{N+1/2}^n (\rh{N}, \rh{N})\right.
    \\
    &\qquad\qquad \left.- F_{N-1/2}^n (\rh{N}, \rh{N})
      \pm F_{N+1/2}^n (\rh{N-1}, \rh{N})
      \right]
    \\
    = \
    &\delta_N^n (\rh{N}- \rh{N-1})
      + \lambda \left[
      F_{N+1/2}^n (\rh{N-1}, \rh{N}) - F_{N-1/2}^n (\rh{N-1}, \rh{N})
      \right].
  \end{align*}
  Observing that
  \begin{align*}
    & \lambda \left[
      F_{N+1/2}^n (\rh{N-1}, \rh{N}) - F_{N-1/2}^n (\rh{N-1}, \rh{N})
      \right]
    \\
    = \
    & \frac{\lambda}{2}\left[
      f (t^n, x_{N+1/2}, \rh{N-1}, \Rh{N+1/2}) + f (t^n, x_{N+1/2}, \rh{N}, \Rh{N+1/2})
      \right.
    \\
    & \quad \left.
      - f (t^n, x_{N-1/2}, \rh{N-1}, \Rh{N-1/2}) - f (t^n, x_{N-1/2}, \rh{N}, \Rh{N-1/2})
      \right]
    \\
    = \
    & \frac{\lambda}{2}\left[
      \partial_x f (t^n, \tilde{x}_{N}, \rh{N-1}, \Rh{N+1/2}) \, \dx
      + \partial_R f (t^n, x_{N-1/2}, \rh{N-1}, \tilde{R}^n_{N}) \, (\Rh{N+1/2} - \Rh{N-1/2})
      \right.
    \\
    & \qquad\left.
      + \partial_x f (t^n, \overline{x}_{N}, \rh{N}, \Rh{N+1/2}) \, \dx
      + \partial_R f (t^n, x_{N-1/2}, \rh{N}, \bar{R}^n_{N}) \, (\Rh{N+1/2} - \Rh{N-1/2})
      \right],
  \end{align*}
  where $\tilde{x}_N, \overline{x}_N \in \, ]x_{N-1/2}, x_{N+1/2}[$
  and
  $\tilde{R}^n_N, \bar{R}^n_N \in \mathcal{I} (\Rh{N-1/2},
  \Rh{N+1/2})$, we conclude
  \begin{displaymath}
    \lambda \, \modulo{ F_{N+1/2}^n (\rh{N-1}, \rh{N}) - F_{N-1/2}^n (\rh{N-1}, \rh{N}) }
    \leq
    \dt \, \frac{C}{2} \, (1 + \mathcal{L}\, \mathcal{C}_1 (t^n)) \, (|\rh{N-1}| + |\rh{N}|).
  \end{displaymath}
  By the positivity of the coefficients involved, we obtain
  \begin{equation}
    \label{eq:diffNb}
    \begin{aligned}
      \modulo{\rho_b^{n+1} - \rho_N^{n+1}} \leq \ & |\rho_b^{n+1} -
      \rh{b}| + (1 - \gamma_N^n) \, |\rh{b} - \rh{N}| + \delta_N^n \,
      |\rh{N}- \rh{N-1}|
      \\
      & + \dt \, \frac{C}{2} \, (1 + \mathcal{L}\, \mathcal{C}_1
      (t^n)) \, (|\rh{N-1}| + |\rh{N}|).
    \end{aligned}
  \end{equation}

  Collect now the estimates~\eqref{eq:Ajn}, \eqref{eq:Bjn},
  \eqref{eq:diff1a} and~\eqref{eq:diffNb}:
  \begin{align*}
    & \sum_{j=0}^N \modulo{\rho_{j+1}^{n+1}  -\rho_j^{n+1}}
    \\
    = \
    & \modulo{\rho_1^{n+1} - \rho_a^{n+1}}
      + \sum_{j=1}^{N-1}  \modulo{\rho_{j+1}^{n+1}  -\rho_j^{n+1}}
      +  \modulo{\rho_b^{n+1} - \rho_n^{n+1}}
    \\
    \leq \
    &
      \gamma_1^n |\rh{2} - \rh{1}| + (1-\delta_1^n) |\rh{1}- \rh{a}| + |\rho_a^{n+1} -   \rh{a}|
      + \dt \, \frac{C}{2} \, (1 + \mathcal{L}\, \mathcal{C}_1 (t^n)) \, (|\rh{a}| +   |\rh{1}|)
    \\
    &  + \sum_{j=1}^{N-1} \modulo{\rh{j+1} - \rh{j}}
      + \delta_1^n  \, \modulo{\rh{1} - \rh{a}} - \delta_N^n \, \modulo{\rh{N}- \rh{N-1}}
      + \gamma^n_N \, \modulo{\rh{b} - \rh{N}} - \gamma_1^n \, \modulo{\rh{2} - \rh{1}}
    \\
    & + \mathcal{K}_1 (t^n)\, \dt
      \left(\sum_{j=1}^{N-1} \modulo{\rh{j+1} - \rh{j}}
      + \frac12 \, \modulo{\rh{1} - \rh{a}}\right)
      + \mathcal{K}_2 (t^n) \, \dt
      + \mathcal{K}_3 (t^n) \, \dt \, \modulo{\rh{a}}
    \\
    & +|\rho_b^{n+1} - \rh{b}| + (1 - \gamma_N^n) |\rh{b} - \rh{N}|
      + \delta_N^n  |\rh{N}- \rh{N-1}|
      + \dt \, \frac{C}{2} \, (1 + \mathcal{L}\, \mathcal{C}_1 (t^n)) \, (|\rh{N-1}| +
      |\rh{N}|)
    \\
    \leq \
    & |\rho_a^{n+1} -   \rh{a}| +|\rho_b^{n+1} - \rh{b}|
      + \left( 1 + \mathcal{K}_1 (t^n)\, \dt \right)  \sum_{j=0}^{N} \modulo{\rh{j+1} - \rh{j}}
      + \mathcal{K}_2 (t^n) \, \dt
    \\
    & + \frac32 \, C (1 + \mathcal{L}\, \mathcal{C}_1 (t^n)) \, \norma{\rho^n}_{\L\infty (]a,b[)}  \, \dt
      + \left(\mathcal{K}_3 (t^n) +\frac{C}{2} \, (1 + \mathcal{L}\, \mathcal{C}_1 (t^n))  \right)
      \norma{\rho_a}_{\L\infty ([0,t^n])} \, \dt.
  \end{align*}
  Exploiting~\eqref{eq:linf} and setting
  \begin{align}
    \nonumber
    \mathcal{K}_4 (t^n) = \
    & \mathcal{K}_2 (t^n) + \frac32 \, C (1
      + \mathcal{L}\, \mathcal{C}_1 (t^n)) \, e^{\mathcal{C}_2 (t^n)
      \, t^n} \max\left\{\norma{\rho_o}_{\L\infty (]a,b[)}, \,
      \norma{\rho_a}_{\L\infty ([0,t])}, \, \norma{\rho_b}_{\L\infty
      ([0,t])}\right\}
    \\
    \label{eq:K4}
    &+ \left[\mathcal{K}_3 (t^n) +\frac{C}{2} \, (1 + \mathcal{L}\,
      \mathcal{C}_1 (t^n)) \right] \norma{\rho_a}_{\L\infty
      ([0,t^n])},
  \end{align}
  we deduce from the previous estimate by a standard iterative
  procedure
  \begin{align*}
    \sum_{j=0}^N \modulo{\rho_{j+1}^{n}  -\rho_j^{n}} \leq \
    & e^{\mathcal{K}_1 (t^n) \, t^n}  \left(
      \sum_{j=0}^N \modulo{\rho_{j+1}^{0}  -\rho_j^{0}}
      +  \sum_{m=1}^n \modulo{\rho_{a}^{m}  -\rho_a^{m-1}}
      +  \sum_{m=1}^n \modulo{\rho_{b}^{m}  -\rho_b^{m-1}}
      \right)
    \\
    & + \frac{\mathcal{K}_4(t^n)}{\mathcal{K}_1 (t^n)} \,
      \left( e^{\mathcal{K}_1 (t^n) \, t^n} - 1  \right),
  \end{align*}
  concluding the proof.
\end{proof}

\begin{corollary}\label{cor:BVxt} {\bf ($\BV$ estimate in space and time)}
  Let $\rho_o \in \BV (\,]a,b[; \reali^+)$ and
  $\rho_a, \, \rho_b \in \BV (\reali^+; \reali^+)$. Let~\ref{f},
  \ref{omega} and~\eqref{eq:CFL} hold.  Then for all
  $n=1, \ldots, N_T$, the following estimate holds
  \begin{equation}
    \label{eq:BVxt}
    \sum_{m=0}^{n-1} \sum_{j=0}^{N}
    \dt \, \modulo{\rho^{m}_{j+1}-\rho^{m}_{j}}
    +
    \sum_{m=0}^{n-1} \sum_{j=0}^{N+1}
    \dx \, \modulo{\rho_j^{m+1} - \rho_j^m}
    \leq
    \mathcal{C}_{xt} (t^n),
  \end{equation}
  where $\mathcal{C}_{xt} (n \, \dt)$ is given by~\eqref{eq:Cxt}.
\end{corollary}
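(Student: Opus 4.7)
The plan is to split the double sum in (\ref{eq:BVxt}) into its two pieces and bound each by reusing estimates already established.

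For the first sum, $\sum_{m=0}^{n-1} \sum_{j=0}^{N} \dt \, \modulo{\rho^m_{j+1} - \rho^m_j}$, I would simply invoke Proposition~\ref{prop:BV} at each time step to get $\sum_{j=0}^N \modulo{\rho^m_{j+1} - \rho^m_j} \leq \mathcal{C}_x(t^m)$. Since $\mathcal{C}_1$, $\mathcal{K}_1$ and $\mathcal{K}_4$ are all manifestly nondecreasing in $t$ by their defining formulas, so is $\mathcal{C}_x$, and hence $\mathcal{C}_x(t^m) \leq \mathcal{C}_x(t^n)$ for $m \leq n$. Therefore the first piece is bounded by $t^n \, \mathcal{C}_x(t^n)$.

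For the second sum $\sum_{m=0}^{n-1} \sum_{j=0}^{N+1} \dx \, \modulo{\rho_j^{m+1} - \rho_j^m}$, I would separate boundary indices from interior ones. The contributions at $j=0$ and $j=N+1$, where $\rho^m_0 = \rho_a^m$ and $\rho^m_{N+1} = \rho_b^m$ are just cell averages of the boundary data, telescope after summing over $m$ and use the standard approximation inequality $\sum_m \modulo{\rho_a^{m+1} - \rho_a^m} \leq \tv(\rho_a;[0,t^n])$, giving an $\O\dx$ contribution. For interior indices $1 \leq j \leq N$, scheme~(\ref{eq:scheme}) gives $|\rho_j^{m+1} - \rho_j^m| = \lambda \modulo{F^m_{j+1/2}(\rh{j}, \rh{j+1}) - F^m_{j-1/2}(\rh{j-1}, \rh{j})}$. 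By adding and subtracting $F^m_{j+1/2}(\rh{j}, \rh{j})$ and $F^m_{j-1/2}(\rh{j}, \rh{j})$, this difference splits into (i) Lipschitz-in-$\rho$ contributions controlled by $\tfrac{L+\alpha}{2}(|\rh{j+1}-\rh{j}|+|\rh{j}-\rh{j-1}|)$ using~\ref{f}, and (ii) the ``frozen-$\rho$'' difference $|F^m_{j+1/2}(\rh{j}, \rh{j}) - F^m_{j-1/2}(\rh{j}, \rh{j})|$, which was already shown in the proof of Lemma~\ref{lem:linf} to be at most $C \dx (1 + \mathcal{L}\,\mathcal{C}_1(t^m)) \, \rh{j}$.

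Multiplying by $\dx$ and summing over $1 \leq j \leq N$, the first contribution produces $\dt \,(L+\alpha) \sum_{j=0}^N |\rh{j+1} - \rh{j}| \leq \dt \,(L+\alpha)\,\mathcal{C}_x(t^m)$ via Proposition~\ref{prop:BV}, while the second contribution yields $\dt \, C (1 + \mathcal{L}\,\mathcal{C}_1(t^m))\,\norma{\rho^m}_{\L1(]a,b[)} \leq \dt \, C (1 + \mathcal{L}\,\mathcal{C}_1(t^m))\,\mathcal{C}_1(t^m)$ via Lemma~\ref{lem:l1}. A final summation over $m = 0,\ldots,n-1$, once more using the monotonicity of $\mathcal{C}_1$ and $\mathcal{C}_x$, produces a bound linear in $t^n$, and collecting all constants defines $\mathcal{C}_{xt}(t^n)$ as in~\eqref{eq:Cxt}. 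The proof is largely mechanical: no new idea beyond the interior/boundary split and the flux-decomposition trick (already used in Lemmas~\ref{lem:pos} and~\ref{lem:linf}) is required, and the only care needed is in the bookkeeping that ensures each intermediate bound at time $t^m$ is dominated by the corresponding quantity at time $t^n$.
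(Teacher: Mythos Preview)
Your proposal is correct and follows essentially the same route as the paper: bound the spatial sum by $t^n\,\mathcal{C}_x(t^n)$ via Proposition~\ref{prop:BV}, separate the boundary indices $j=0,N+1$ in the temporal sum, and for interior indices decompose the flux difference into a Lipschitz-in-$\rho$ part (controlled by the spatial variation) plus an $x$-variation part (controlled by the $\L1$ norm). The only cosmetic difference is that you recycle the frozen-$\rho$ estimate $|F^m_{j+1/2}(\rh{j},\rh{j})-F^m_{j-1/2}(\rh{j},\rh{j})|\le C\dx(1+\mathcal{L}\,\mathcal{C}_1)\rh{j}$ from the proof of Lemma~\ref{lem:linf}, whereas the paper re-expands the full flux difference with the mean value theorem; as a result the paper picks up an extra harmless term $\tfrac{C}{2}\bigl(\norma{\rho_b}_{\L\infty}+\mathcal{L}\,\mathcal{C}_1\,\norma{\rho_a}_{\L\infty}\bigr)$ in $\mathcal{C}_{xt}$ that your route does not produce --- your bound is therefore slightly sharper and still implies~\eqref{eq:BVxt} with the stated $\mathcal{C}_{xt}$.
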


\begin{proof}
  By Proposition~\ref{prop:BV}, we have
  \begin{equation}
    \label{eq:7}
    \sum_{m=0}^{n-1} \sum_{j=0}^{N}
    \dt \, \modulo{\rho^{m}_{j+1}-\rho^{m}_{j}}
    \leq n \, \dt \, \mathcal{C}_x (n \, \dt).
  \end{equation}
  By the definition of the scheme~\eqref{eq:scheme}, for
  $m \in \left\{0, \ldots, n-1\right\}$ and
  $j\in\left\{1, \ldots, N \right\}$, we have
  \begin{align*}
    \modulo{\rho_j^{m+1} - \rho_j^m} \leq \
    & \frac{\lambda \, \alpha}{2} \left(
      \modulo{\rho_{j+1}^m - \rho_j^m} +
      \modulo{\rho_j^m - \rho_{j-1}^m}
      \right)
    \\
    & + \frac{\lambda}{2}\left|
      f (t^m, x_{j+1/2}, \rho_j^m, R_{j+1/2}^m)
      + f (t^m, x_{j+1/2}, \rho_{j+1}^m, R_{j+1/2}^m)
      \right.
    \\
    &\qquad \left.
      - f (t^m, x_{j-1/2}, \rho_{j-1}^m, R_{j-1/2}^m)
      - f (t^m, x_{j-1/2}, \rho_j^m, R_{j-1/2}^m)
      \right|
    \\
    \leq \
    & \frac{\lambda \, \alpha}{2} \left(
      \modulo{\rho_{j+1}^m - \rho_j^m} +
      \modulo{\rho_j^m - \rho_{j-1}^m}
      \right)
    \\
    & + \frac\lambda2 \left[
      \modulo{\partial_x f (t^m, \tilde{x}_{j}, \rho_{j}^m, R_{j+1/2}^m)} \dx
      \right.
    \\
    &\qquad +
      \modulo{\partial_\rho f (t^m, x_{j-1/2}, \tilde{\rho}_{j-1/2}^m, R_{j+1/2}^m)}\modulo{\rho_j^m - \rho_{j-1}^m}
    \\
    & \qquad
      + \modulo{\partial_R f (t^m, x_{j-1/2}, \rho_{j-1}^m, \tilde{R}_{j}^m)}\modulo{R_{j+1/2}^m - R_{j-1/2}^m}
    \\
    & \qquad +  \modulo{\partial_x f (t^m, \tilde{x}_{j}, \rho_{j+1}^m, R_{j+1/2}^m)} \dx
    \\
    &\qquad+
      \modulo{\partial_\rho f (t^m, x_{j-1/2}, \tilde{\rho}_{j+1/2}^m, R_{j+1/2}^m)}\modulo{\rho_{j+1}^m - \rho_{j}^m}
    \\
    & \qquad \left.
      + \modulo{\partial_R f (t^m, x_{j-1/2}, \rho_{j}^m, \tilde{R}_{j}^m)}\modulo{R_{j+1/2}^m - R_{j-1/2}^m}
      \right]
    \\
    \leq \
    & \frac\lambda2 \, (\alpha + L) \left(
      \modulo{\rho_{j+1}^m - \rho_j^m} +
      \modulo{\rho_j^m - \rho_{j-1}^m}
      \right)
    \\
    & + \frac \lambda2 \, C \dx \left[
      \modulo{\rho_{j+1}^m} + \modulo{\rho_j^m}
      + \mathcal{L}\, \mathcal{C}_1 (t^m) \left(
      \modulo{\rho_j^m} + \modulo{\rho_{j-1}^m}
      \right)
      \right],
  \end{align*}
  where
  \begin{align*}
    \tilde{x}_{j} \in \
    & ]x_{j-1/2}, x_{j+1/2}[,
    &
      \tilde{R}_j^m \in\
    & \mathcal{I} (R_{j-1/2}^m, R_{j+1/2}^m),
    \\
    \tilde{\rho}_{j-1/2}^m \in \
    & \mathcal{I} (\rho_{j-1}^m, \rho_j^m),
    &
      \tilde{\rho}_{j+1/2}^m \in \
    &\mathcal{I} (\rho_{j}^m, \rho_{j+1}^m).
  \end{align*}
  Therefore, by Lemma~\ref{lem:l1} and Proposition~\ref{prop:BV}
  \begin{align*}
    \sum_{j=1}^N  \dx \,  \modulo{\rho_j^{m+1} - \rho_j^m}  \leq \
    &  \dt \, (\alpha + L) \,  \sum_{j=0}^N  \modulo{\rho_{j+1}^m - \rho_j^m}
      +  \dt \, C \, (1 + \mathcal{L}\, \mathcal{C}_1 (m \,\dt))
      \, \dx \, \sum_{j=1}^N \modulo{\rho_j^m}
    \\
    &+ \dt  \, \dx \, \frac{C}{2} \left(\modulo{\rho_b^m} + \mathcal{L} \, \, \mathcal{C}_1 (m \, \dt)
      \modulo{\rho_a^m}\right)
    \\
    \leq \
    &  \dt \, (\alpha + L) \, \mathcal{C}_x (m \, \dt)
      + \dt \, C \, (1 + \mathcal{L}\, \mathcal{C}_1 (m \, \dt)) \, \mathcal{C}_1 (m \, \dt)
    \\
    & + \dt  \, \frac{C}{2} \left(\norma{\rho_b}_{\L\infty ([0,t^m])}
      + \mathcal{L} \, \mathcal{C}_1 (m \, \dt) \,
      \norma{\rho_a}_{\L\infty ([0,t^m])}\right)
    \\
    = \
    & \dt \, \mathcal{C}_t (m \, \dt),
  \end{align*}
  where we set
  \begin{equation}
    \label{eq:Ct}
    \mathcal{C}_t (\tau) =
    (\alpha + L) \, \mathcal{C}_x (\tau)
    +  C \, \mathcal{C}_1 (\tau) \left(1 + \mathcal{L}\, \mathcal{C}_1 (\tau)\right)
    + \frac{C}{2} \left(\norma{\rho_b}_{\L\infty ([0,\tau])}
      + \mathcal{L} \, \mathcal{C}_1 (\tau) \,
      \norma{\rho_a}_{\L\infty ([0,\tau])}\right).
  \end{equation}
  In particular,
  \begin{align}
    \nonumber
    \sum_{j=0}^{N+1}  \dx \,  \modulo{\rho_j^{m+1} - \rho_j^m} =\
    &  \dx \, \modulo{\rho_a^{m+1} - \rho_a^m} + \dx \, \modulo{\rho_b^{m+1} - \rho_b^m}
      + \sum_{j=1}^N  \dx \,  \modulo{\rho_j^{m+1} - \rho_j^m}
    \\
    \label{eq:miserve}
    \leq \
    & \dx \, \modulo{\rho_a^{m+1} - \rho_a^m} + \dx \, \modulo{\rho_b^{m+1} - \rho_b^m} + \dt \, \mathcal{C}_t (m \, \dt),
  \end{align}
  which, summed over $m=0, \ldots, n-1$, yields
  \begin{equation}
    \label{eq:8}
    \sum_{m=0}^{n-1} \sum_{j=0}^{N+1}  \dx \,  \modulo{\rho_j^{m+1} - \rho_j^m} \leq
    \dx  \sum_{m=0}^{n-1} \left( \modulo{\rho_a^{m+1} - \rho_a^m}
      + \modulo{\rho_b^{m+1} - \rho_b^m} \right)
    + n \, \dt \,  \mathcal{C}_t (n \, \dt).
  \end{equation}
  Summing~\eqref{eq:7} and~\eqref{eq:8} we obtain the desired
  estimate~\eqref{eq:BVxt}, with
  \begin{align}
    \nonumber
    \mathcal{C}_{xt} (n \, \dt) = \
    & n \, \dt \, (1 + \alpha + L) \,
      \mathcal{C}_x (n \, \dt)
      + n \, \dt \, C \, \mathcal{C}_1 (n \, \dt) (1+\mathcal{L} \, \mathcal{C}_1 (n \, \dt)) \,
    \\   \label{eq:Cxt}
    & + n \, \dt \, \frac{C}{2} \left(\norma{\rho_b}_{\L\infty
      ([0,t^n])} + \mathcal{L} \, \mathcal{C}_1 (n \, \dt) \, \norma{\rho_a}_{\L\infty
      ([0,t^n])}\right)
    \\ \nonumber
    & + \dx \sum_{m=0}^{n-1} \left( \modulo{\rho_a^{m+1} - \rho_a^m}
      + \modulo{\rho_b^{m+1} - \rho_b^m} \right),
  \end{align}
  concluding the proof. Notice that the last sum in~\eqref{eq:Cxt} is
  bounded by
  \begin{displaymath}\dx \left(\tv(\rho_a;[0,T]) +
      \tv(\rho_b;[0,T])\right).
  \end{displaymath}
\end{proof}

\subsection{Discrete entropy inequality}
\label{sec:Eineq}

We introduce the following notation: for $j=1, \ldots, N$,
$n =0, \ldots, N_T -1$, $k \in \reali$,
\begin{align*}
  H_j^n (u,v,z) = \
    & v -\lambda \, \left( F_{j+1/2}^n (v,z) -  F_{j-1/2}^n (u,v)\right),
  \\
  G^{n,k}_{j+1/2} (u,v) =  \
    & F_{j+1/2}^n (u \wedge k,v \wedge k) -  F_{j+1/2}^n (k,k),
  \\
  L^{n,k}_{j+1/2} (u,v) = \
    & F_{j+1/2}^n (k,k) - F_{j+1/2}^n (u \vee k,v \vee k),
\end{align*}
where $F^n_{j+1/2} (u,v)$ is defined as in~\eqref{eq:numFl}.  Observe
that, due to the definition of the scheme,
$\rho_j^{n+1} = H_j^n (\rh{j-1}, \rh{j}, \rh{j+1})$. Notice moreover
that the following equivalences hold true: $(s-k)^+ = s \wedge k - k$
and $(s-k)^- = k - s \vee k$.

\begin{lemma}
  \label{lem:Eineq}
  Let~\ref{f}, \ref{omega} and~\eqref{eq:CFL} hold. Then the
  approximate solution $\rho_\Delta$ in~\eqref{eq:rhodelta} satisfies
  the following discrete entropy inequalities: for $j=1, \ldots, N$,
  $n =0, \ldots, N_T-1$ and $k \in \reali$,
  \begin{equation}
    \label{eq:Eineq+}
    \begin{aligned}
      (\rho_j^{n+1} - k)^+ - (\rh{j} -k)^+ + \lambda \, \left(
        G^{n,k}_{j+1/2} (\rh{j}, \rh{j+1}) - G^{n,k}_{j-1/2}
        (\rh{j-1}, \rh{j}) \right) &
      \\
      + \lambda \, \sgn^+ (\rho_j^{n+1} - k) \, \left( f (t^n,
        x_{j+1/2}, k, \Rh{j+1/2}) - f (t^n, x_{j-1/2}, k, \Rh{j-1/2})
      \right) & \leq 0,
    \end{aligned}
  \end{equation}
  and
  \begin{equation}
    \label{eq:Eineq-}
    \begin{aligned}
      (\rho_j^{n+1} - k)^- - (\rh{j} -k)^- + \lambda \, \left(
        L^{n,k}_{j+1/2} (\rh{j}, \rh{j+1}) - L^{n,k}_{j-1/2}
        (\rh{j-1}, \rh{j}) \right) &
      \\
      + \lambda \, \sgn^- (\rho_j^{n+1} - k) \, \left( f (t^n,
        x_{j+1/2}, k, \Rh{j+1/2}) - f (t^n, x_{j-1/2}, k, \Rh{j-1/2})
      \right) & \leq 0.
    \end{aligned}
  \end{equation}
\end{lemma}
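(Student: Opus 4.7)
\emph{Proof plan.} The approach is the classical Crandall--Majda framework for monotone finite volume schemes: first prove monotonicity of the one-step update map $H_j^n$, then exhibit a Crandall--Majda identity that isolates the entropy dissipation together with a source-like remainder, and finally conclude by a two-case reduction on the sign of $\rho_j^{n+1}-k$. The only new feature compared with the homogeneous local case is that the explicit $(x,R)$-dependence of $f$ forces $H_j^n(k,k,k) \neq k$, and the resulting discrepancy is precisely what produces the $\sgn^\pm$-weighted term appearing in~\eqref{eq:Eineq+} and~\eqref{eq:Eineq-}.

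The first step is monotonicity of $H_j^n(u,v,z)$ in each of its three arguments under~\eqref{eq:CFL}. Direct differentiation of~\eqref{eq:numFl} gives $\partial_u H_j^n = \tfrac{\lambda}{2}\bigl(\partial_\rho f(t^n, x_{j-1/2}, u, R^n_{j-1/2}) + \alpha\bigr) \geq 0$ and $\partial_z H_j^n = \tfrac{\lambda}{2}\bigl(\alpha - \partial_\rho f(t^n, x_{j+1/2}, z, R^n_{j+1/2})\bigr) \geq 0$ thanks to $\alpha \geq L > \snr{\partial_\rho f}$, while $\snr{\partial_\rho f} \leq L$ combined with $\lambda\alpha \leq 1/3$ and $\lambda L \leq 1/6$ from~\eqref{eq:CFL} yields $\partial_v H_j^n \geq 1/2 > 0$. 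This essentially recycles the computation already carried out in Lemma~\ref{lem:pos}.

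For~\eqref{eq:Eineq+} I would then set $\tilde\rho_i := \rh{i} \wedge k$ for $i \in \{j-1,j,j+1\}$ (so $\tilde\rho_i \geq k$ and $\tilde\rho_i \geq \rh{i}$ in the paper's convention) and write $\hat H := H_j^n(\tilde\rho_{j-1}, \tilde\rho_j, \tilde\rho_{j+1})$. Since $F^n_{j\pm 1/2}(k,k) = f(t^n, x_{j\pm 1/2}, k, R^n_{j\pm 1/2})$ (the numerical viscosity and the symmetric pair cancel), adding and subtracting these quantities inside $\hat H - k$ gives the Crandall--Majda identity
\begin{align*}
\hat H - k ={}& (\rh{j}-k)^+ - \lambda\bigl[G^{n,k}_{j+1/2}(\rh{j},\rh{j+1}) - G^{n,k}_{j-1/2}(\rh{j-1},\rh{j})\bigr]\\
&- \lambda\bigl[f(t^n,x_{j+1/2},k,R^n_{j+1/2}) - f(t^n,x_{j-1/2},k,R^n_{j-1/2})\bigr].
\end{align*}
By Step~1 applied to the pairs of orderings $\tilde\rho_i \geq \rh{i}$ and $\tilde\rho_i \geq k$ one has both $\hat H \geq H_j^n(\rh{j-1},\rh{j},\rh{j+1}) = \rho_j^{n+1}$ and $\hat H \geq H_j^n(k,k,k) = k - \lambda\bigl[f(t^n,x_{j+1/2},k,R^n_{j+1/2}) - f(t^n,x_{j-1/2},k,R^n_{j-1/2})\bigr]$. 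Substituting the identity above into~\eqref{eq:Eineq+} reduces the claim to showing $(\rho_j^{n+1}-k)^+ \leq \hat H - k + \lambda\bigl(1-\sgn^+(\rho_j^{n+1}-k)\bigr)\bigl[f(t^n,x_{j+1/2},k,R^n_{j+1/2}) - f(t^n,x_{j-1/2},k,R^n_{j-1/2})\bigr]$: when $\rho_j^{n+1}>k$ this collapses to $\rho_j^{n+1} \leq \hat H$, and when $\rho_j^{n+1}\leq k$ it collapses to $\hat H \geq H_j^n(k,k,k)$, both already secured. The inequality~\eqref{eq:Eineq-} follows by the mirror argument with $\tilde\rho_i := \rh{i}\vee k \leq k$, using $k - s\vee k = (s-k)^-$. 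The main obstacle is bookkeeping rather than conceptual: spotting that the insertion of $\pm \lambda\,[F^n_{j+1/2}(k,k) - F^n_{j-1/2}(k,k)]$ in Step~2 produces exactly the source term absent in the homogeneous case, and keeping the conventions for $\wedge/\vee$, $(\cdot)^\pm$ and $\sgn^\pm$ consistent throughout.
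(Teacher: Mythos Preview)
Your proof is correct and follows essentially the same Crandall--Majda route as the paper: monotonicity of $H_j^n$ under~\eqref{eq:CFL}, the identity for $H_j^n(\tilde\rho_{j-1},\tilde\rho_j,\tilde\rho_{j+1})-k$, and the two consequences $\hat H\geq \rho_j^{n+1}$, $\hat H\geq H_j^n(k,k,k)$ from monotonicity. The only cosmetic difference is in the last step: you split on the sign of $\rho_j^{n+1}-k$, whereas the paper packages both cases into the single elementary inequality $(A+B)^+\geq A^+ + \sgn^+(A)\,B$ applied with $A=\rho_j^{n+1}-k$ and $B=\lambda\bigl[f(t^n,x_{j+1/2},k,R^n_{j+1/2})-f(t^n,x_{j-1/2},k,R^n_{j-1/2})\bigr]$; your two cases are exactly the proof of that inequality.
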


\begin{proof}
  Consider the map $(u,v,z) \mapsto H_j^n (u,v,z)$. By the CFL
  condition~\eqref{eq:CFL}, it holds
  \begin{align*}
    \frac{\partial H_j^n}{\partial u} (u,v,z) = \
    & \frac{\lambda}{2} \left(
      \partial_\rho f (t^n, x_{j-1/2}, u, \Rh{j-1/2}) + \alpha
      \right)
      \geq 0,
    \\
    \frac{\partial H_j^n}{\partial v} (u,v,z) = \
    & 1 - \lambda \, \alpha - \frac{\lambda}{2}
      \left(
      \partial_\rho f (t^n, x_{j+1/2}, v, \Rh{j+1/2}) -  \partial_\rho f (t^n, x_{j-1/2}, v, \Rh{j-1/2})
      \right)
      \geq 0
    \\
    \frac{\partial H_j^n}{\partial z} (u,v,z) = \
    &\frac{\lambda}{2} \left(
      \alpha - \partial_\rho f (t^n, x_{j+1/2}, z, \Rh{j+1/2})
      \right)
      \geq 0.
  \end{align*}
  Notice that
  \begin{displaymath}
    H_j^n (k,k,k) = k - \lambda \left( f (t^n, x_{j+1/2}, k, \Rh{j+1/2}) - f (t^n, x_{j-1/2}, k, \Rh{j-1/2})\right).
  \end{displaymath}
  The monotonicity properties obtained above imply that
  \begin{align*}
    &  H_j^n (\rh{j-1} \wedge k, \rh{j} \wedge k, \rh{j+1} \wedge k) - H_j^n (k,k,k)
    \\
    \geq \
    & H_j^n (\rh{j-1}, \rh{j}, \rh{j+1}) \wedge H_j^n (k,k,k)  - H_j^n (k,k,k)
    \\
    = \
    & \left(H_j^n (\rh{j-1}, \rh{j}, \rh{j+1}) - H_j^n (k,k,k)\right) ^+
    \\
    =\
    & \left(
      \rho_j^{n+1} - k + \lambda \left( f (t^n, x_{j+1/2}, k, \Rh{j+1/2}) - f (t^n, x_{j-1/2}, k, \Rh{j-1/2})\right)
      \right)^+.
  \end{align*}
  Moreover, we also have
  \begin{align*}
    &  H_j^n (\rh{j-1} \wedge k, \rh{j} \wedge k, \rh{j+1} \wedge k) - H_j^n (k,k,k)
    \\
    = \
    & (\rh{j} \wedge k) - k
    \\
    & - \lambda \left[
      F^n_{j+1/2} (\rh{j} \wedge k, \rh{j+1} \wedge k) - F^n_{j-1/2} (\rh{j-1} \wedge k, \rh{j} \wedge k)
      - F^n_{j+1/2} (k,k) + F^n_{j-1/2}(k,k)
      \right]
    \\
    = \ & (\rh{j} - k)^+ -
          \lambda \left(  G^{n,k}_{j+1/2} (\rh{j}, \rh{j+1}) -  G^{n,k}_{j-1/2} (\rh{j-1}, \rh{j})\right).
  \end{align*}
  Hence,
  \begin{align*}
    & (\rh{j} - k)^+
      - \lambda \left(  G^{n,k}_{j+1/2} (\rh{j}, \rh{j+1}) -  G^{n,k}_{j-1/2} (\rh{j-1}, \rh{j})\right)
    \\
    \geq \
    &  \left(
      \rho_j^{n+1} - k + \lambda \left( f (t^n, x_{j+1/2}, k, \Rh{j+1/2}) - f (t^n, x_{j-1/2}, k, \Rh{j-1/2})\right)
      \right)^+
    \\
    = \
    & \sgn^+\left(  \rho_j^{n+1} - k + \lambda \left( f (t^n, x_{j+1/2}, k, \Rh{j+1/2})
      - f (t^n, x_{j-1/2}, k, \Rh{j-1/2})\right)  \right)
    \\
    & \times \left( \rho_j^{n+1} - k + \lambda \left( f (t^n, x_{j+1/2}, k, \Rh{j+1/2})
      - f (t^n, x_{j-1/2}, k, \Rh{j-1/2})\right)\right)
    \\
    \geq \
    &\left(  \rho_j^{n+1} - k \right) ^+
      + \lambda \,  \sgn^+\left(  \rho_j^{n+1} - k \right)
      \left(f (t^n, x_{j+1/2}, k, \Rh{j+1/2})  - f (t^n, x_{j-1/2}, k, \Rh{j-1/2}) \right),
  \end{align*}
  proving~\eqref{eq:Eineq+}, while~\eqref{eq:Eineq-} is proven in an
  entirely similar way.
\end{proof}

\subsection{Convergence towards an entropy weak solution}
\label{sec:conv}

The uniform $\L\infty$-bound provided by Lemma~\ref{lem:linf} and the
total variation estimate of Corollary~\ref{cor:BVxt} allow to apply
Helly's compactness theorem, ensuring the existence of a subsequence
of $\rho_\Delta$, still denoted by $\rho_\Delta$, converging in $\L1$
to a function $\rho \in \L\infty ([0,T] \times ]a,b[)$, for all
$T>0$. We need to prove that this limit function is indeed an entropy
weak solution to~\eqref{eq:original}, in the sense of
Definition~\ref{def:MV}.

\begin{lemma}
  \label{lem:entropyLimit}
  Let $\rho_o \in \BV (\,]a,b[; \reali^+)$ and
  $\rho_a, \, \rho_b \in \BV (\reali^+; \reali^+)$. Let~\ref{f},
  \ref{omega} and~\eqref{eq:CFL} hold. Then the piecewise constant
  approximate solutions $\rho_\Delta$ in~\eqref{eq:rhodelta} resulting
  from the adapted Lax--Friedrichs scheme~\eqref{eq:scheme} converge,
  as $\dx \to 0$, towards an entropy weak solution of the initial
  boundary value problem~\eqref{eq:original}.
\end{lemma}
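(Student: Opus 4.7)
The strategy is to pass to the limit $\dx \to 0$ in the two discrete entropy inequalities~\eqref{eq:Eineq+}--\eqref{eq:Eineq-} of Lemma~\ref{lem:Eineq} and to verify that every $\L1$-accumulation point $\rho$ of $\rho_\Delta$ satisfies Definition~\ref{def:MV}. Fix $\phi \in \Cc1(\R^2;\R^+)$ and $k \in \R$, set $\phi^n_j := \phi(t^n, x_j)$, multiply~\eqref{eq:Eineq+} by $\dx \, \phi^n_j$ and sum for $n = 0, \ldots, N_T - 1$ and $j = 1, \ldots, N$. A discrete Abel summation in both $n$ and $j$, transferring each forward difference onto $\phi$, splits the left-hand side into four blocks: a bulk part involving $(\rho^{n+1}_j - k)^+ (\phi^{n+1}_j - \phi^n_j)/\dt$ and $G^{n,k}_{j+1/2}(\rh{j}, \rh{j+1})(\phi^n_{j+1} - \phi^n_j)/\dx$; an initial-datum term $\sum_j \dx \,(\rho^0_j - k)^+ \phi^0_j$; a source-like term carrying the discrete $x$-increment of $f(t^n,\cdot,k,R^n_{\cdot\pm1/2})$; and two residual boundary terms $\dt \, G^{n,k}_{1/2}(\rh{a},\rh{1})\phi^n_1$ and $\dt \, G^{n,k}_{N+1/2}(\rh{N},\rh{b})\phi^n_N$ that survive the telescoping because of the ghost-cell conventions $\rh{0}=\rh{a}$, $\rh{N+1}=\rh{b}$.

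The bulk passage to the limit is standard. The $\L1$ convergence $\rho_\Delta \to \rho$ combined with the uniform $\L\infty$ bound of Lemma~\ref{lem:linf}, the Lipschitz regularity of $\phi$ and consistency of the modified Lax--Friedrichs flux~\eqref{eq:numFl} identify the first two groups with the integrand of~\eqref{eq:MV}. The key auxiliary fact is that the discrete averages $\Rh{j+1/2}$ converge strongly in $\L1$ to $R(t,x)$ defined in~\eqref{eq:R}, while their $x$-increments $(\Rh{j+1/2}-\Rh{j-1/2})/\dx$ converge to $\partial_x R(t,x)$; both follow from Lemma~\ref{lem:l1}, Proposition~\ref{prop:BV} and the $\W21 \cap \W2\infty$ regularity of $\omega$ in~\ref{omega}, exactly as in the manipulation already performed in the $\L\infty$ proof. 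The initial contribution matches $\int_a^b (\rho_o - k)^+ \phi(0,x) \d{x}$ through $\L1$ convergence of the cell-mean projection of $\rho_o$.

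The delicate point is the treatment of the two boundary remainders, since Definition~\ref{def:MV} only allows them to survive when weighted by $\norma{\partial_\rho f}_{\L\infty}$ and with the ``outgoing'' sign $(\rho_a-k)^+$, $(\rho_b-k)^+$. Exploiting the monotonicity of $F^n_{1/2}$ granted by~\eqref{eq:CFL} (non-decreasing in the first argument and non-increasing in the second, as already verified in the proof of Lemma~\ref{lem:Eineq}) together with the Lipschitz continuity of $f$ in $\rho$ from~\ref{f}, one splits
\begin{displaymath}
  G^{n,k}_{1/2}(\rh{a}, \rh{1})
  = \bigl[F^n_{1/2}(\rh{a}\wedge k,\rh{1}\wedge k) - F^n_{1/2}(k,\rh{1}\wedge k)\bigr]
  + \bigl[F^n_{1/2}(k,\rh{1}\wedge k) - F^n_{1/2}(k,k)\bigr],
\end{displaymath}
bounds the first bracket by $\norma{\partial_\rho f}_{\L\infty}(\rh{a}-k)^+$ modulo a term that vanishes by consistency, and absorbs the second bracket into the bulk limit since it only depends on the interior trace $\rh{1}$. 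A symmetric decomposition at $j=N$ treats $G^{n,k}_{N+1/2}(\rh{N},\rh{b})$. The space-time BV estimate of Corollary~\ref{cor:BVxt} provides strong $\L1$ convergence of the discrete boundary traces, transporting these discrete penalties onto the continuous ones of~\eqref{eq:MV}. Running the same program from~\eqref{eq:Eineq-} yields the $\sgn^-$ counterpart.

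The main obstacle I foresee is precisely this boundary estimate: the maps $G^{n,k}$ and $L^{n,k}$ mix the prescribed value $\rh{a}$ (resp.~$\rh{b}$) with the interior cell value $\rh{1}$ (resp.~$\rh{N}$) through the $\wedge k$ and $\vee k$ truncations, so one has to keep track of the signs in each component of $F^n_{1/2}$ and apply its monotonicity in the ``right'' variable in order to land on the sharp constant $\norma{\partial_\rho f}_{\L\infty}$ demanded by Definition~\ref{def:MV}, rather than on a coarser bound involving the viscosity constant $\alpha$.
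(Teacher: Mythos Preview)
Your overall architecture—multiply~\eqref{eq:Eineq+} by $\dx\,\phi(t^n,x_j)$, Abel-sum in $n$ and $j$, and pass to the limit in each block—is exactly the paper's approach, and your treatment of the bulk, initial-datum, and source-like terms is correct.

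Where you diverge is in the boundary step, and two points need correction. First, the second bracket $F^n_{1/2}(k,\rh{1}\wedge k)-F^n_{1/2}(k,k)$ does not ``absorb into the bulk limit'': it is a single term sitting at $j=1$ with no interior counterpart. What actually happens is simpler—by monotonicity of $F^n_{1/2}$ in its second argument this bracket is $\leq 0$, so in the $\geq 0$ form of the inequality it can just be dropped. Second, your worry about recovering the sharp constant $\norma{\partial_\rho f}_{\L\infty}$ is misplaced: your first bracket equals $\tfrac12\bigl(\partial_\rho f(\cdots)+\alpha\bigr)(\rh{a}-k)^+$, which is bounded by $\alpha(\rh{a}-k)^+$ but not by $\norma{\partial_\rho f}_{\L\infty}(\rh{a}-k)^+$, and no ``consistency'' correction removes the $\alpha$. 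The paper makes no attempt at the sharp constant either. It introduces an auxiliary quantity $S$ built with $\alpha$ in front of the boundary penalties, shows that $S$ converges to the continuous entropy expression with coefficient $\alpha$, and then proves $S-T\leq\mathcal{O}(\dx)$ via the two one-sided bounds
\[
-\alpha\,(v-k)^+\;\leq\;G^{n,k}_{j+1/2}(u,v)\;\leq\;\alpha\,(u-k)^+,
\]
which follow directly from the monotonicity of the numerical flux. The limit therefore satisfies~\eqref{eq:MV} with $\alpha$ in place of $\norma{\partial_\rho f}_{\L\infty}$; this is the intended target and is sufficient for the uniqueness argument of Section~\ref{sec:lipDep}. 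So drop the sharp-constant goal and either mimic the paper's $S$--$T$ comparison or apply the two one-sided bounds above directly to $G^{n,k}_{1/2}(\rh{a},\rh{1})$ and $G^{n,k}_{N+1/2}(\rh{N},\rh{b})$.
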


\begin{proof}
  We consider the discrete entropy inequality~\eqref{eq:Eineq+}, for
  the positive semi-entropy, and we follow~\cite{DFG}, see
  also~\cite{Vovelle2002}. Add and subtract
  $G^{n,k}_{j+1/2} (\rh{j}, \rh{j})$ in~\eqref{eq:Eineq+} and
  rearrange it as follows
  \begin{align*}
    0 \geq \
    & (\rho_j^{n+1} - k)^+ - (\rh{j} -k)^+
      + \lambda \,  \left(
      G^{n,k}_{j+1/2} (\rh{j}, \rh{j+1}) - G^{n,k}_{j+1/2} (\rh{j}, \rh{j})\right)
    \\
    &+ \lambda \left( G^{n,k}_{j+1/2} (\rh{j}, \rh{j}) -  G^{n,k}_{j-1/2} (\rh{j-1}, \rh{j})
      \right)
    \\
    & + \lambda \, \sgn^+ (\rho_j^{n+1} - k) \, \left(
      f (t^n, x_{j+1/2}, k, \Rh{j+1/2}) - f (t^n, x_{j-1/2}, k, \Rh{j-1/2})
      \right)
  \end{align*}
  Let $\phi \in \Cc1 ([0,T[ \times [a,b]; \reali^+)$ for some $T > 0$,
  multiply the inequality above by $\dx \, \phi (t^n, x_j)$ and sum
  over $j=1, \ldots, N$ and $n \in \naturali$, so to get
  \begin{align}
    \label{eq:9}
    0 \geq \
    & \dx \sum_{n=0}^{+\infty} \sum_{j=1}^N
      \left[ (\rho_j^{n+1} - k )^+ - (\rh{j} - k)^+ \right] \, \phi (t^n, x_j)
    \\
    \label{eq:9a}
    & + \dt  \sum_{n=0}^{+\infty} \sum_{j=1}^N
      \left[
      \left(G^{n,k}_{j+1/2} (\rh{j}, \rh{j+1}) - G^{n,k}_{j+1/2} (\rh{j}, \rh{j})\right)
      \right.
    \\
    \label{eq:9b}
    & \qquad \left.
      -
      \left(G^{n,k}_{j-1/2} (\rh{j-1}, \rh{j}) - G^{n,k}_{j+1/2} (\rh{j}, \rh{j}) \right)
      \right] \, \phi (t^n, x_j)
    \\
    \label{eq:9c}
    & + \dt \sum_{n=0}^{+\infty} \sum_{j=1}^N
      \sgn^+ (\rho_j^{n+1} - k)\!  \left(
      f (t^n, x_{j+1/2}, k, \Rh{j+1/2}) \!-\! f (t^n, x_{j-1/2}, k, \Rh{j-1/2})
      \right) \phi (t^n, x_j).
  \end{align}
  Summing by parts, we obtain
  \begin{align*}
    [\eqref{eq:9}] = \
    & - \dx \sum_{j=1}^N (\rho_j^0 - k)^+ \, \phi (0,x_j)
      - \dx \, \dt  \sum_{n=1}^{+\infty} \sum_{j=1}^N
      (\rh{j} - k)^+ \frac{\phi (t^n, x_j) - \phi (t^{n-1}, x_j)}{\dt}
    \\
    \underset{\dx \to 0^+}{\longrightarrow}
    & - \int_a^b (\rho_o (x) - k)^+ \, \phi (0,x) \d{x}
      - \int_0^{+\infty} \int_a^b (\rho (t,x) - k)^+ \, \partial_t\phi (t,x) \d{x}\d{t},
  \end{align*}
  and
  \begin{align*}
    &[\eqref{eq:9c}]
    \\ = \
    & \dx \, \dt \sum_{n=0}^{+\infty} \sum_{j=1}^N
      \sgn^+ \!(\rho_j^{n+1} - k) \,
      \frac{f (t^n, x_{j+1/2}, k, \Rh{j+1/2}) \!-\! f (t^n, x_{j-1/2}, k, \Rh{j-1/2})}{\dx}
      \, \phi (t^n, x_j)
    \\
    \underset{\dx \to 0^+}{\longrightarrow}
    & \int_0^{+\infty} \int_a^b   \sgn^+ (\rho (t,x) - k)  \,
      \frac{\d{}}{\d{x}} f (t, x, k, R (t,x)) \, \phi (t,x) \d{x} \d{t},
  \end{align*}
  by the Dominated Convergence
  Theorem. Concerning~\eqref{eq:9a}--\eqref{eq:9b}, we get
  \begin{align*}
    [\eqref{eq:9a}-\eqref{eq:9b}] = \
    & \dt  \sum_{n=0}^{+\infty} \sum_{j=1}^N
      \left(G^{n,k}_{j+1/2} (\rh{j}, \rh{j+1}) - G^{n,k}_{j+1/2} (\rh{j}, \rh{j})\right)  \, \phi (t^n, x_j)
    \\
    & - \dt  \sum_{n=0}^{+\infty} \sum_{j=0}^{N-1}
      \left(G^{n,k}_{j+1/2} (\rh{j}, \rh{j+1}) - G^{n,k}_{j+3/2} (\rh{j+1}, \rh{j+1}) \right) \,
      \phi (t^n, x_{j+1})
    \\
    = \
    & \dt  \sum_{n=0}^{+\infty} \sum_{j=1}^{N-1}
      \left[
      \left(G^{n,k}_{j+1/2} (\rh{j}, \rh{j+1}) - G^{n,k}_{j+1/2} (\rh{j}, \rh{j})\right)  \, \phi (t^n, x_j)
      \right.
    \\
    & \qquad \left.
      - \left(G^{n,k}_{j+1/2} (\rh{j}, \rh{j+1}) - G^{n,k}_{j+3/2} (\rh{j+1}, \rh{j+1}) \right) \,
      \phi (t^n, x_{j+1})
      \right]
    \\
    & + \dt  \sum_{n=0}^{+\infty} \left[
      \left(G^{n,k}_{N+1/2} (\rh{N}, \rh{b}) - G^{n,k}_{N+1/2} (\rh{N}, \rh{N})\right)  \, \phi (t^n, x_N)
      \right.
    \\
    & \qquad \left.
      - \left(G^{n,k}_{1/2} (\rh{a}, \rh{1}) - G^{n,k}_{3/2} (\rh{1}, \rh{1}) \right) \, \phi (t^n, x_{1})
      \right]
    \\
    = \
    & T^{int} + T^b = T,
  \end{align*}
  where we set
  \begin{align*}
    T^{int} = \
    & \dt  \sum_{n=0}^{+\infty} \sum_{j=1}^{N-1}
      \left[
      \left(G^{n,k}_{j+1/2} (\rh{j}, \rh{j+1}) - G^{n,k}_{j+1/2} (\rh{j}, \rh{j})\right)  \, \phi (t^n, x_j)
      \right.
    \\
    & \qquad \left.
      - \left(G^{n,k}_{j+1/2} (\rh{j}, \rh{j+1}) - G^{n,k}_{j+3/2} (\rh{j+1}, \rh{j+1}) \right) \,
      \phi (t^n, x_{j+1})
      \right],
    \\
    T^b = \
    &  \dt  \sum_{n=0}^{+\infty} \left[
      \left(G^{n,k}_{N+1/2} (\rh{N}, \rh{b}) - G^{n,k}_{N+1/2} (\rh{N}, \rh{N})\right)  \, \phi (t^n, x_N)
      \right.
    \\
    & \qquad \left.
      - \left(G^{n,k}_{1/2} (\rh{a}, \rh{1}) - G^{n,k}_{3/2} (\rh{1}, \rh{1}) \right) \, \phi (t^n, x_{1})
      \right].
  \end{align*}
  Define
  \begin{equation}
    \label{eq:S}
    \begin{aligned}
      S = \ & - \dx \, \dt \sum_{n=0}^{+\infty} \sum_{j=1}^N
      G^{n,k}_{j+1/2} (\rh{j}, \rh{j}) \, \frac{\phi (t^n,x_{j+1}) -
        \phi (t^n,x_j)}{\dx}
      \\
      &- \alpha \, \dt \sum_{n=0}^{+\infty} \left( (\rh{a} -k)^+ \,
        \phi (t^n,a) + (\rh{b}-k)^+ \, \phi (t^n,b) \right).
    \end{aligned}
  \end{equation}
  Observe that
  \begin{align*}
    G^{n,k}_{j+1/2} (\rh{j}, \rh{j})  = \
    & F^n_{j+1/2} (\rh{j} \wedge k, \rh{j} \wedge k) - F^n_{j+1/2} (k,k)
    \\
    = \
    & f (t^n, x_{j+1/2}, \rh{j}\wedge k, \Rh{j+1/2}) - f (t^n, x_{j+1/2}, k ,\Rh{j+1/2})
    \\
    = \
    & \sgn^+ (\rh{j} - k ) \left(f (t^n, x_{j+1/2}, \rh{j}, \Rh{j+1/2}) - f (t^n, x_{j+1/2}, k ,\Rh{j+1/2}) \right).
  \end{align*}
  It follows then easily that
  \begin{align*}
    S \underset{\dx \to 0^+}{\longrightarrow} \
    & - \int_0^{+\infty}\int_a^b
      \sgn^+ (\rho (t,x) -k)\, \left( f (t, x, \rho, R (t,x)) - f (t, x, k , R (t,x)) \right) \, \partial_x \phi (t,x)
      \d{x}\d{t}
    \\
    & - \alpha \left( \int_0^{+\infty} (\rho_a (t) -k)^+ \, \phi (t,a) \d{t}
      + \int_0^{+\infty} (\rho_b (t) -k)^+ \, \phi (t,b) \d{t}\right).
  \end{align*}
  Let us rewrite $S$~\eqref{eq:S} as follows:
  \begin{align*}
    S = \
    &  - \dt \sum_{n=0}^{+\infty} \sum_{j=1}^N
      G^{n,k}_{j+1/2} (\rh{j}, \rh{j}) \left(\phi (t^n,x_{j+1}) -  \phi (t^n,x_j\right)
    \\
    &- \alpha \, \dt \sum_{n=0}^{+\infty} \left( (\rh{a} -k)^+ \,
      \phi (t^n,a) + (\rh{b}-k)^+ \, \phi (t^n,b) \right)
    \\
    = \
    & \dt \sum_{n=0}^{+\infty} \left(
      \sum_{j=1}^N G^{n,k}_{j+1/2}(\rh{j}, \rh{j}) \, \phi (t^n, x_{j+1})
      - \sum_{j=0}^{N-1} G^{n,k}_{j+3/2}  (\rh{j+1}, \rh{j+1}) \, \phi (t^n, x_{j+1})
      \right)
    \\
    &- \alpha \, \dt \sum_{n=0}^{+\infty} \left( (\rh{a} -k)^+ \,
      \phi (t^n,a) + (\rh{b}-k)^+ \, \phi (t^n,b) \right)
    \\
    = \
    & -\dt \sum_{n=0}^{+\infty} \sum_{j=1}^{N-1}\left(
      G^{n,k}_{j+1/2}(\rh{j}, \rh{j})  - G^{n,k}_{j+3/2}  (\rh{j+1}, \rh{j+1})
      \right) \, \phi (t^n, x_{j+1})
    \\
    & - \dt \sum_{n=0}^{+\infty} \left(
      G^{n,k}_{N+1/2}(\rh{N}, \rh{N}) \,  \phi (t^n, x_{N+1})
      - G^{n,k}_{3/2}  (\rh{1}, \rh{1}) ) \, \phi (t^n, x_{1})
      \right)
    \\
    &- \alpha \, \dt \sum_{n=0}^{+\infty} \left( (\rh{a} -k)^+ \,
      \phi (t^n,a) + (\rh{b}-k)^+ \, \phi (t^n,b) \right)
    \\
    =\
    & S^{int} + S^b,
  \end{align*}
  where we set
  \begin{align*}
    S^{int} = \
    & -\dt \sum_{n=0}^{+\infty} \sum_{j=1}^{N-1}\left(
      G^{n,k}_{j+1/2}(\rh{j}, \rh{j})  - G^{n,k}_{j+3/2}  (\rh{j+1}, \rh{j+1})
      \right) \, \phi (t^n, x_{j+1}),
    \\
    S^b = \
    & - \dt \sum_{n=0}^{+\infty} \left(
      G^{n,k}_{N+1/2}(\rh{N}, \rh{N}) \,  \phi (t^n, x_{N+1})
      - G^{n,k}_{3/2}  (\rh{1}, \rh{1}) ) \, \phi (t^n, x_{1})
      \right)
    \\
    &- \alpha \, \dt \sum_{n=0}^{+\infty} \left( (\rh{a} -k)^+ \,
      \phi (t^n,a) + (\rh{b}-k)^+ \, \phi (t^n,b) \right).
  \end{align*}
  Focus on $S^{int}$: by adding and subtracting
  $G^{n,k}_{j+1/2} (\rh{j}, \rh{j})$ in the brackets, we can rewrite
  it as
  \begin{align*}
    S^{int} = \
    & -\dt \sum_{n=0}^{+\infty} \sum_{j=1}^{N-1}\left(
      G^{n,k}_{j+1/2}(\rh{j}, \rh{j+1})  - G^{n,k}_{j+1/2}  (\rh{j}, \rh{j})
      \right) \, \phi (t^n, x_{j+1})
    \\
    & -\dt \sum_{n=0}^{+\infty} \sum_{j=1}^{N-1}\left(
      G^{n,k}_{j+1/2}(\rh{j}, \rh{j+1})  - G^{n,k}_{j+3/2}  (\rh{j+1}, \rh{j+1})
      \right) \, \phi (t^n, x_{j+1}),
  \end{align*}
  We evaluate now the distance between $T^{int}$ and $S^{int}$:
  \begin{displaymath}
    \modulo{T^{int} - S^{int}} \leq
    \dt \sum_{n=0}^{+\infty} \sum_{j=1}^{N-1}
    \modulo{ G^{n,k}_{j+1/2}(\rh{j}, \rh{j+1})  - G^{n,k}_{j+1/2}  (\rh{j}, \rh{j})}\,
    \modulo{\phi (t^n, x_{j+1}) - \phi (t^n, x_j)}.
  \end{displaymath}
  Observe that
  \begin{align*}
    & \modulo{ G^{n,k}_{j+1/2}(\rh{j}, \rh{j+1})  - G^{n,k}_{j+1/2}  (\rh{j}, \rh{j})}
    \\
    = \
    & \modulo{ F^n_{j+1/2}(\rh{j} \wedge k, \rh{j+1} \wedge k)  - F^n_{j+1/2}  (\rh{j} \wedge k, \rh{j}) \wedge k}
    \\
    =\
    & \frac12 \left|f (t^n, x_{j+1/2},\rh{j}\wedge k, \Rh{j+1/2})
      + f (t^n, x_{j+1/2}, \rh{j+1} \wedge k, \Rh{j+1/2})\right.
    \\
    & \qquad
      \left.
      - 2 \, f (t^n, x_{j+1/2}, \rh{j} \wedge k, \Rh{j+1/2})
      - \alpha \, ( \rh{j+1} \wedge k -  \rh{j} \wedge k)\right|
    \\
    = \
    & \frac12 \left|f (t^n, x_{j+1/2},\rh{j+1}\wedge k, \Rh{j+1/2})
      - f (t^n, x_{j+1/2}, \rh{j} \wedge k, \Rh{j+1/2})\right.
    \\
    & \qquad\qquad
      \left. - \alpha \, ( \rh{j+1} \wedge k -  \rh{j} \wedge k)\right|
    \\
    \leq \
    &  \frac12\, (L + \alpha) \, \modulo{ \rh{j+1} \wedge k -  \rh{j} \wedge k}
    \\
    \leq \
    &  \alpha \, \modulo{ \rh{j+1} -  \rh{j}}.
  \end{align*}
  Therefore,
  \begin{align}
    \nonumber
    \modulo{T^{int} - S^{int}}
    \leq \
    & \alpha \, \dx \, \dt \, \norma{\partial_x \phi}_{\L\infty}
      \sum_{n=0}^{+\infty} \sum_{j=1}^{N-1}
      \modulo{ \rh{j+1} -  \rh{j}}
    \\
    \label{eq:int}
    \leq \
    & \alpha \, \dx \, T \, \norma{\partial_x \phi}_{\L\infty}\max_{0 \leq n \leq T/\dt} \tv (\rho_\Delta (t^n, \cdot))
      = \mathcal{O} (\dx),
  \end{align}
  thanks to the uniform BV estimate~\eqref{eq:spaceBV}. Pass now to
  the terms $T^b$ and $S^b$:
  \begin{align}
    \nonumber
    S^b - T^b = \
    &  - \dt \sum_{n=0}^{+\infty} \left(
      G^{n,k}_{N+1/2}(\rh{N}, \rh{N}) \,  \phi (t^n, x_{N+1})
      - G^{n,k}_{3/2}  (\rh{1}, \rh{1}) ) \, \phi (t^n, x_{1})
      \right)
    \\ \nonumber
    &- \alpha \, \dt \sum_{n=0}^{+\infty} \left( (\rh{a} -k)^+ \, \phi (t^n,a) + (\rh{b}-k)^+ \, \phi (t^n,b) \right)
    \\ \nonumber
    &- \dt  \sum_{n=0}^{+\infty}
      \left(G^{n,k}_{N+1/2} (\rh{N}, \rh{b}) - G^{n,k}_{N+1/2} (\rh{N}, \rh{N})\right)  \, \phi (t^n, x_N)
    \\ \nonumber
    & + \dt \sum_{n=0}^{+\infty}
      \left(G^{n,k}_{1/2} (\rh{a}, \rh{1}) - G^{n,k}_{3/2} (\rh{1}, \rh{1}) \right) \, \phi (t^n, x_{1})
    \\ \label{eq:10}
    = \
    & \dt  \sum_{n=0}^{+\infty}
      \left( G^{n,k}_{1/2} (\rh{a}, \rh{1}) \, \phi (t^n, x_{1}) - \alpha \,  (\rh{a} -k)^+ \, \phi (t^n,a)
      \right)
    \\ \label{eq:10b}
    & - \dt \sum_{n=0}^{+\infty}
      \left( \alpha \,  (\rh{b}-k)^+ \, \phi (t^n,b)  + G^{n,k}_{N+1/2} (\rh{N}, \rh{b}) \,  \phi (t^n, x_N)\right)
    \\
    \label{eq:10c}
    & - \dt  \sum_{n=0}^{+\infty}G^{n,k}_{N+1/2}(\rh{N}, \rh{N}) \left(\phi (t^n, x_{N+1})- \phi (t^n, x_N)\right).
  \end{align}
  Observe that
  \begin{align*}
    \frac{\partial F^n_{j+1/2}}{\partial u} (u,v)
    = \
    & \frac12 \, \left(
      \partial_\rho f (t^n, x_{j+1/2}, u, \Rh{j+1/2}) + \alpha
      \right)
      \geq \frac12 \, (-L + \alpha)
      \geq 0,
    \\
    \frac{\partial F^n_{j+1/2}}{\partial v} (u,v)
    = \
    & \frac12 \, \left(
      \partial_\rho f (t^n, x_{j+1/2}, v, \Rh{j+1/2}) - \alpha
      \right)
      \leq \frac12 \, (L-\alpha)
      \leq 0,
  \end{align*}
  meaning that the numerical flux is increasing with respect to the
  first variable and decreasing with respect to the second one. Thus,
  \begin{align*}
    G^{n,k}_{j+1/2} (u,v) = \
    & F^n_{j+1/2} (u \wedge k, v \wedge k) - F^n_{j+1/2} (k,k)
    \\
    \geq \
    & F^n_{j+1/2} (k, v \wedge k) - F^n_{j+1/2} (k,k)
    \\
    = \
    & \frac12 \, \left(f (t^n, x_{j+1/2}, v \wedge k, \Rh{j+1/2}) - f (t^n, x_{j+1/2},k, \Rh{j+1/2})
      -\alpha  (v \wedge k - k)
      \right)
    \\
    \geq \
    & - \frac {L+\alpha}{2} \, \modulo{v \wedge k - k}
    \\
    \geq \
    & - \alpha \,  (v-k)^+
  \end{align*}
  and
  \begin{align*}
    G^{n,k}_{j+1/2} (u,v) = \
    & F^n_{j+1/2} (u \wedge k, v \wedge k) - F^n_{j+1/2} (k,k)
    \\
    \leq \
    & F^n_{j+1/2} (u \wedge k, k) - F^n_{j+1/2} (k,k)
    \\
    = \
    & \frac12 \, \left(f (t^n, x_{j+1/2}, u \wedge k, \Rh{j+1/2}) - f (t^n, x_{j+1/2},k, \Rh{j+1/2})
      -\alpha  (k - u \wedge k )
      \right)
    \\
    \leq \
    &  \frac {L+\alpha}{2} \, \modulo{u \wedge k - k}
    \\
    \leq \
    & \alpha \,  (u-k)^+.
  \end{align*}
  Hence,
  \begin{align*}
    [\eqref{eq:10}] = \
    &  \dt  \sum_{n=0}^{+\infty}
      G^{n,k}_{1/2} (\rh{a}, \rh{1})  \left(\phi (t^n, x_{1}) - \phi (t^n,a)\right)
    \\
    &  + \dt  \sum_{n=0}^{+\infty}
      \left( G^{n,k}_{1/2} (\rh{a}, \rh{1}) - \alpha \,  (\rh{a} -k)^+ \right) \phi (t^n,a)
    \\
    \leq \
    & \alpha \, T \, \dx \, \norma{\partial_x \phi}_{\L\infty} \sup_{0\leq n \leq T/\dt}(\rh{a} - k)^+
    \\
    & + \alpha \, \dt \sum_{n=0}^{+\infty} \left( (\rh{a} - k)^+ - (\rh{a} - k)^+\right) \phi (t^n,a)
    \\
    \leq \
    & \alpha \, T \, \dx \, \norma{\partial_x \phi}_{\L\infty} \norma{\rho_a}_{\L\infty} = \mathcal{O} (\dx),
    \\
    [\eqref{eq:10b}] = \
    &  - \dt  \sum_{n=0}^{+\infty}
      \left(  \alpha \,  (\rh{b} -k)^+  + G^{n,k}_{N+1/2} (\rh{N}, \rh{b})\right) \phi (t^n,b)
    \\
    &
      - \dt  \sum_{n=0}^{+\infty}
      G^{n,k}_{N+1/2} (\rh{N}, \rh{b})  \left(\phi (t^n, x_{N}) - \phi (t^n,b)\right)
    \\
    \leq \
    & - \alpha \, \dt \sum_{n=0}^{+\infty} \left( (\rh{b} - k)^+ - (\rh{b} - k)^+\right) \phi (t^n,b)
    \\
    & +  \alpha \, T \, \dx \, \norma{\partial_x \phi}_{\L\infty} \sup_{0\leq n \leq T/\dt}(\rh{b} - k)^+
    \\
    \leq \
    & \alpha \, T \, \dx \, \norma{\partial_x \phi}_{\L\infty} \norma{\rho_b}_{\L\infty} = \mathcal{O} (\dx),
    \\
    [\eqref{eq:10c}] = \
    & \dt \modulo{
      \sum_{n=0}^{+\infty}G^{n,k}_{N+1/2}(\rh{N}, \rh{N}) \left(\phi (t^n, x_{N+1})- \phi (t^n, x_N)\right)
      }
    \\
    \leq \
    & \dt \, \dx \, \norma{\partial_x \phi}_{\L\infty} \sum_{n=0}^{+\infty}
      \modulo{G^{n,k}_{N+1/2}(\rh{N}, \rh{N}) }
    \\
    = \
    &\dt \, \dx \, \norma{\partial_x \phi}_{\L\infty} \sum_{n=0}^{+\infty}
      \modulo{F^n_{N+1/2} (\rh{N} \wedge k, \rh{N} \wedge k) - F^N_{N+1/2} (k,k)}
    \\
    = \
    & \dt \, \dx \, \norma{\partial_x \phi}_{\L\infty} \sum_{n=0}^{+\infty}
      \modulo{f (t^n, x_{N+1/2}, \rh{N} \wedge k, \Rh{N+1/2}) - f (t^n, x_{N+1/2}, k, \Rh{N+1/2})}
    \\
    \leq
    & L \, \dt \, \dx \, \norma{\partial_x \phi}_{\L\infty} \sum_{n=0}^{+\infty} \modulo{\rh{N} \wedge k - k}
    \\
    = \
    &  L \, \dt \, \dx \, \norma{\partial_x \phi}_{\L\infty} \sum_{n=0}^{+\infty} (\rh{N} - k)^+
    \\
    \leq \
    & L \, T \, \dx \, \norma{\partial_x \phi}_{\L\infty} \sup_{0\leq n \leq T/ \dt} \norma{\rho^n }_{\L\infty}
      = \mathcal{O} (\dx),
  \end{align*}
  thanks to the uniform $\L\infty$ estimate~\eqref{eq:linf}.  Hence,
  $S^b - T^b \leq \mathcal{O} (\dx)$, so that we finally get
  \begin{align*}
    0 \geq \
    & [\eqref{eq:9}\ldots\eqref{eq:9c}]
    \\
    = \
    & [\eqref{eq:9}] + [\eqref{eq:9c}] + T \pm S
    \\
    \geq \
    & [\eqref{eq:9}] + [\eqref{eq:9c}] + S - \mathcal{O} (\dx),
  \end{align*}
  concluding the proof.
\end{proof}

\section{Lipschitz continuous dependence on initial and boundary data}
\label{sec:lipDep}

\begin{proposition}
  \label{prop:lipDep}
  Fix $T>0$. Let~\ref{f}, \ref{omega} and~\eqref{eq:CFL} hold.  Assume
  moreover
  $\partial_{x\rho}^2f, \, \partial_{\rho R}^2 f \in \L\infty ([0,T]
  \times [a,b] \times \reali^2;\reali)$. Let
  $\rho_o, \sigma_o \in \BV (\,]a,b[; \reali^+)$ and
  $\rho_a, \, \rho_b, \, \sigma_a, \, \sigma_b \in \BV (\,]0,T[;
  \reali^+)$. Call $\rho$ and $\sigma$ the corresponding solutions
  to~\eqref{eq:original}. Then the following estimate holds
  \begin{align*}
    & \norma{\rho (T) - \sigma (T)}_{\L1 (]a,b[)}
    \\
    \leq \
    &  \left(\norma{\rho_o - \sigma_o}_{\L1 (]a,b[)}
      + L \left(
      \norma{\rho_a - \sigma_a}_{\L1 ([0,T])} +
      \norma{\rho_b - \sigma_b}_{\L1 ([0,T])}
      \right)\right)
      \left(
      1 + B(T) \,T \, e^{B(T) \, T}
      \right),
  \end{align*}
  and $B(T)$ is defined in~\eqref{eq:Bt}.
\end{proposition}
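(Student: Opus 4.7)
The plan is to view each of $\rho$ and $\sigma$ not as a solution of the non-local problem~\eqref{eq:original}, but as a solution of a \emph{local} IBVP in which the non-local term has been frozen. Concretely, I set
\begin{displaymath}
  \tilde{f}_\rho (t,x,r) = f\left(t,x,r, R (t,x)\right),
  \qquad
  \tilde{f}_\sigma (t,x,r) = f\left(t,x,r, S (t,x)\right),
\end{displaymath}
with $R = \mathcal{J}\rho$, $S = \mathcal{J}\sigma$ as in~\eqref{eq:R}. Then $\rho$ is an entropy weak solution of the local IBVP with flux $\tilde{f}_\rho$ and data $(\rho_o,\rho_a,\rho_b)$, while $\sigma$ is one with flux $\tilde{f}_\sigma$ and data $(\sigma_o,\sigma_a,\sigma_b)$. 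This reduction is the bridge to the stability result recalled in the appendix, Theorem~\ref{thm:bohStab}, which compares two solutions of \emph{local} IBVPs with different fluxes and different data.

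Next, I apply Theorem~\ref{thm:bohStab} to $\rho$ and $\sigma$. The resulting estimate has the form
\begin{displaymath}
  \norma{\rho (t) - \sigma (t)}_{\L1 (]a,b[)}
  \leq A(t) + \mathcal{F}(t),
\end{displaymath}
where $A(t) = \norma{\rho_o - \sigma_o}_{\L1(]a,b[)} + L\,(\norma{\rho_a - \sigma_a}_{\L1([0,t])} + \norma{\rho_b - \sigma_b}_{\L1([0,t])})$ (the boundary contribution carries the Lipschitz constant $L$ of~\ref{f}), and $\mathcal{F}(t)$ is a flux-distance term integrating quantities of the type $\modulo{\partial_x(\tilde f_\rho - \tilde f_\sigma)}$ and $\modulo{\partial_\rho(\tilde f_\rho - \tilde f_\sigma)}\,\modulo{\partial_x\sigma}$, together with similar derivatives. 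By~\ref{f} and the mean value theorem, each such integrand is controlled pointwise by $\modulo{R(s,x) - S(s,x)}$ or $\modulo{\partial_x R(s,x) - \partial_x S(s,x)}$, multiplied by coefficients which are uniformly bounded on the range of $\rho,\sigma$, thanks to the $\L\infty$ and $\BV$ bounds of Theorem~\ref{thm:main} together with the added hypothesis $\partial^2_{x\rho}f, \partial^2_{\rho R}f\in \L\infty$.

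The non-local operator $\mathcal{J}$ of~\eqref{eq:nl} is Lipschitz from $\L1(]a,b[)$ to $\L\infty(]a,b[)$: using~\ref{omega},
\begin{displaymath}
  \modulo{R(s,x) - S(s,x)} \leq
  \frac{\norma{\omega}_{\L\infty(\reali)}}{K_\omega}\,\norma{\rho(s) - \sigma(s)}_{\L1(]a,b[)},
\end{displaymath}
with an analogous bound for $\modulo{\partial_x R - \partial_x S}$ in terms of $\omega'$ (exactly as in the computation leading to~\eqref{eq:L}). Combining this with the previous step, all contributions to $\mathcal{F}(t)$ collapse into $B(T)\int_0^t \norma{\rho(s) - \sigma(s)}_{\L1(]a,b[)} \d{s}$, with $B(T)$ as in~\eqref{eq:Bt}. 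Thus
\begin{displaymath}
  \norma{\rho (t) - \sigma (t)}_{\L1 (]a,b[)}
  \leq A(T) + B(T) \int_0^t \norma{\rho (s) - \sigma (s)}_{\L1 (]a,b[)} \d{s},
\end{displaymath}
and a standard application of Gronwall's lemma at $t = T$ yields the claimed bound $A(T)\bigl(1 + B(T)\,T\, e^{B(T) T}\bigr)$.

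The main technical hurdle is the bookkeeping inside $\mathcal{F}(t)$: Theorem~\ref{thm:bohStab} involves several derivatives of the frozen fluxes $\tilde f_\rho, \tilde f_\sigma$, and for each of them one must exhibit a pointwise factor of $\modulo{R-S}$ or $\modulo{\partial_x R - \partial_x S}$ in order to close the argument by Gronwall. As stressed in the introduction, this is exactly the step mishandled in~\cite{DFG}: the $\partial_R f$-contributions must be kept inside the Gronwall integrand rather than absorbed into a constant, so that the Lipschitz property of $\mathcal{J}$ from $\L1$ to $\L\infty$ can actually be exploited.
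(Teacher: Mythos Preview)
Your overall strategy---freeze the non-local term, reduce to local IBVPs, bound the flux difference by $\modulo{R-S}$ and $\modulo{\partial_x R-\partial_x S}$, convert these to $\norma{\rho-\sigma}_{\L1}$ via the Lipschitz property of $\mathcal{J}$, and close by Gronwall---is exactly the paper's. But there is a genuine gap in how you invoke Theorem~\ref{thm:bohStab}.

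That theorem, as stated in the appendix, compares two solutions with different fluxes $f_1,f_2$ but the \emph{same} initial and boundary data $u_o,u_a,u_b$. Its conclusion contains only flux-difference integrals and no term of the form $\norma{\rho_o-\sigma_o}_{\L1}$ or $\norma{\rho_a-\sigma_a}_{\L1}$. So you cannot apply it directly to the pair $(\rho,\sigma)$, which differ in both flux and data, and you cannot obtain the $A(t)$ contribution from it. Your sentence ``Theorem~\ref{thm:bohStab}, which compares two solutions of local IBVPs with different fluxes and different data'' misreads the hypothesis.

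The paper fixes this by triangulation: it introduces an auxiliary solution $\pi$ of the local IBVP with flux $g=\tilde f_\rho$ but with $\sigma$'s data $(\sigma_o,\sigma_a,\sigma_b)$, cf.~\eqref{eq:misto}. Then $\norma{\rho-\sigma}_{\L1}\le\norma{\rho-\pi}_{\L1}+\norma{\pi-\sigma}_{\L1}$. The first term (same flux, different data) is handled by Proposition~\ref{prop:bohLip} and yields exactly your $A(t)$; the second term (same data, different fluxes) is where Theorem~\ref{thm:bohStab} legitimately applies and produces the flux-difference integrals that you then bound by $B(T)\int_0^t\norma{\rho-\sigma}_{\L1}\d s$. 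One also needs the a~priori bounds of Theorem~\ref{thm:bohEU} on $\pi$ (in particular $\tv(\pi(s))$) to control the $\min\{\tv(\pi),\tv(\sigma)\}$ factor appearing in Theorem~\ref{thm:bohStab}. With this intermediate step inserted, the rest of your argument goes through unchanged.
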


\begin{proof}
  Introduce the following notation:
  \begin{equation}
    \label{eq:32}
    \begin{aligned}
      R (t,x) = \ & \frac{1}{W (x)} \, \int_a^b \rho (t,y) \, \omega
      (y-x) \d{y}, &&& g (t,x,u) = \ & f (t,x,u,R (t,x)),
      \\
      S (t,x) = \ & \frac{1}{W (x)} \, \int_a^b \sigma (t,y) \, \omega
      (y-x) \d{y}, &&& h (t,x,u) = \ &f (t,x,u,S (t,x)).
    \end{aligned}
  \end{equation}
  Observe that, due to~\ref{omega} and Theorem~\ref{thm:main}, for any
  $t \in [0,T]$ and $x \in [a,b]$,
  \begin{align*}
    R(t,x)
    \leq \
    & \frac{\norma{\omega}_{\L\infty}}{K_\omega} \, \mathcal{R}_1(t),
    &&&
        S(t,x)
        \leq \
    & \frac{\norma{\omega}_{\L\infty}}{K_\omega} \, \mathcal{S}_1(t),
  \end{align*}
  where $\mathcal{S}_1(t)$ is defined analogously to
  $\mathcal{R}_1(t)$ in~\eqref{eq:R1} for $\sigma$.  For later use,
  set
  \begin{displaymath}
    J(t)=
    \frac{\norma{\omega}_{\L\infty}}{K_\omega} \,
    \max\left\{\mathcal{R}_1(t), \,\mathcal{S}_1(t)\right\},
  \end{displaymath}
  so that $R(t,x), \, S(t,x) \in [-J(t), J(t)]$. Compute for later use
  \begin{align}
    \label{eq:Rx}
    & \modulo{\partial_x R (t,x)} \leq \
      \left(\frac{\norma{\omega'}_{\L1}\norma{\omega}_{\L\infty}}{K_\omega^2}
      + \frac{\norma{\omega'}_{\L\infty}}{K_\omega}
      \right) \norma{\rho (t)}_{\L1 (]a,b[)}
      \leq \mathcal{L}\, \mathcal{R}_1(t),
    \\
    \nonumber
    & \modulo{\partial_{xx}^2 R (t,x)}
    \\
    \nonumber
    \leq \
    &\left(
      \frac{2 \, \norma{\omega'}_{\L1}^2  \norma{\omega}_{\L\infty}}{K_\omega^3}
      + \frac{\norma{\omega''}_{\L1} \norma{\omega}_{\L\infty} }{K_\omega^2}
      + \frac{2 \,  \norma{\omega'}_{\L1} \norma{\omega'}_{\L\infty}}{K_\omega^2}
      + \frac{\norma{\omega''}_{\L\infty} }{K_\omega}
      \right) \norma{\rho (t)}_{\L1 (]a,b[)}
    \\
    \label{eq:Rxx}
    \leq \
    & \mathcal{W} \,  \mathcal{R}_1 (t),
  \end{align}
  with $\mathcal{L}$ defined exactly as in~\eqref{eq:L} and
  $\mathcal{W}$ defined as in~\eqref{eq:Wconst}. Observe that
  $\mathcal{L}$ and $\mathcal{W}$ are finite thanks
  to~\ref{omega}. Compute also
  \begin{align}
    \label{eq:RS}
    \modulo{R (t,x) - S (t,x)} \leq \ &
                                        \frac{\norma{\omega}_{\L\infty}}{K_\omega}
                                        \int_a^b \modulo{\rho (t,y) - \sigma (t,y)}\d{y},
    \\
    \label{eq:RSx}
    \modulo{\partial_x R (t,x) - \partial_x S (t,x)}
    \leq \ & \mathcal{L}
             \int_a^b \modulo{\rho (t,y) - \sigma (t,y)}\d{y}.
  \end{align}

  We can think of $\rho$ and $\sigma$ as solutions to the following
  IBVPs
  \begin{displaymath}
    \left\{
      \begin{array}{l}
        \del_t \rho + \del_x g(t,x,\rho) =0,
        \\
        \rho(0,x) = \rho_o(x),
        \\
        \rho(t,a) = \rho_a (t),
        \\
        \rho(t,b) = \rho_b (t),
      \end{array}
    \right.
    \qquad\quad
    \left\{
      \begin{array}{l@{\qquad}r@{\,}c@{\,}l}
        \del_t \sigma + \del_x h(t,x,\sigma) =0,
        & (t,x)
        &\in
        & ]0,T[ \times ]a,b[,
        \\
        \sigma(0,x) = \sigma_o(x),
        & x
        &\in
        &]a,b[,
        \\
        \sigma(t,a) = \sigma_a (t),
        & t
        &\in
        &]0,T[,
        \\
        \sigma(t,b) = \sigma_b (t),
        & t
        &\in
        &]0,T[.
      \end{array}
    \right.
  \end{displaymath}
  Moreover, consider also the following IBVP:
  \begin{equation}
    \label{eq:misto}
    \left\{
      \begin{array}{l@{\qquad}r@{\,}c@{\,}l}
        \del_t \pi + \del_x g(t,x,\pi) =0,
        & (t,x)
        &\in
        & ]0,T[ \times ]a,b[,
        \\
        \pi(0,x) = \sigma_o(x),
        & x
        &\in
        &]a,b[,
        \\
        \pi(t,a) = \sigma_a (t),
        & t
        &\in
        & ]0,T[,
        \\
        \pi(t,b) = \sigma_b (t),
        & t
        &\in
        & ]0,T[.
      \end{array}
    \right.
  \end{equation}
  Thanks to~\ref{f} and to the additional assumptions on $f$, the flux
  functions $g$ and $h$ defined in~\eqref{eq:32} satisfy the
  hypotheses of Theorem~\ref{thm:bohEU}, Proposition~\ref{prop:bohLip}
  and Theorem~\ref{thm:bohStab}. Indeed, focusing on $g$, compute:
  \begin{displaymath}
    \partial_{xu}^2 g (t,x,u) = \partial_{x\rho}^2 f(t,x,u,R (t,x)
    + \partial_{u R}^2 f (t,x,u,R (t,x)) \, \partial_x R (t,x).
  \end{displaymath}
  Thus, thanks also to~\eqref{eq:Rx}, $\partial_{xu}^2 g$ is
  finite. Therefore, we can use the results of~\cite{1d}, recalled in
  Appendix~\ref{sec:app}: by Theorem~\ref{thm:bohEU},
  problem~\eqref{eq:misto} admits a unique solution in
  $(\L\infty \cap \BV) (\,]0,T[ \times ]a,b[; \reali)$, which
  satisfies, for all $t\in[0,T[$
  \begin{align*}
    \norma{\pi (t)}_{\L\infty (]a,b[)}\leq \
    & \mathcal{P}(t) ,
    \\
    \tv (\pi (t)) \leq \
    & \left(
      \tv(\sigma_o) + \tv(\sigma_a;[0,t]) + \tv(\sigma_b;[0,t]) + \mathcal{K}(t) \, t
      \right) e^{\mathcal{C}_4(t) \, t},
  \end{align*}
  with
  \begin{align*}
    \mathcal{P}(t) := \
    & \left(\max\left\{\norma{\sigma_o}_{\L\infty (]a.b[)}, \,
      \norma{\sigma_a}_{\L\infty ([0,t])}, \, \norma{\sigma_b}_{\L\infty ([0,t])}\right\}
      + \mathcal{C}_3(t) \, t \right)e^{\mathcal{C}_4(t) \, t},
    \\
    \mathcal{C}_3(t) := \
    & \norma{\partial_x g (\cdot, \cdot, 0)}_{\L\infty ([0,t] \times [a,b])},
    \\
    \mathcal{C}_4(t) := \
    & \norma{\partial_{xu}^2 g}_{\L\infty ([0,t] \times [a,b] \times \reali)},
    \\
    \mathcal{K} (t) := \
    & 2 \, \mathcal{C}_3(t)
      + 2 \, (b-a) \, \norma{\partial_{xx}^2 g}_{\L\infty([0,t] \times [a,b] \times [-\mathcal{P}(t), \, \mathcal{P}(t)])}
    \\
    & + \frac12 \left( 3 \, \mathcal{P}(t) + \norma{\sigma_a}_{\L\infty([0,t])}\right)
      \norma{\partial_{xu}^2 g}_{\L\infty([0,t] \times [a,b] \times [-\mathcal{P}(t), \, \mathcal{P}(t)])}.
  \end{align*}
  Due to the definition of $g$ in~\eqref{eq:32} and~\ref{f}, we obtain
  \begin{align*}
    \modulo{\partial_x g (t, x, 0)}
    =\
    & \modulo{\partial_x f (t,z,0, R (t,x))
      + \partial_R f (t,x,0,R (t,x)) \, \partial_x R (t,x)}
      = 0,
    \\
    \norma{\partial_{xu}^2 g}_{\L\infty ([0,t] \times [a,b] \times \reali)} \leq \
    & \norma{\partial_{x \rho}^2 f}_{\L\infty ([0,t] \times [a,b] \times \reali \times [-J(t), J(t)])}
    \\
    & +
      \norma{\partial_{\rho R}^2 f}_{\L\infty ([0,t] \times [a,b] \times \reali \times [-J(t), J(t)])}
      \mathcal{L} \, \mathcal{R}_1(t),
  \end{align*}
  so that $\mathcal{C}_3(t)=0$,
  $\mathcal{C}_4(t) \leq \mathcal{C}_5(t)$, where we set
  \begin{equation}
    \label{eq:c5}
    \mathcal{C}_5 (t) =
    \norma{\partial_{x \rho}^2 f}_{\L\infty ([0,t] \times [a,b] \times \reali \times [-J(t), J(t)])}
    +
    \norma{\partial_{\rho R}^2 f}_{\L\infty ([0,t] \times [a,b] \times \reali \times [-J(t), J(t)])}
    \mathcal{L} \, \mathcal{R}_1(t)
  \end{equation}
  Hence
  \begin{equation}
    \label{eq:pinf}
    \norma{\pi (t)}_{\L\infty (]a,b[)}\leq
    \mathcal{P}_{\infty} (t) =
    e^{\mathcal{C}_5
      (t) \, t}
    \max\left\{\norma{\sigma_o}_{\L\infty (]a.b[)}, \,
      \norma{\sigma_a}_{\L\infty ([0,t])}, \, \norma{\sigma_b}_{\L\infty ([0,t])}\right\}.
  \end{equation}
  Moreover,
  \begin{align*}
    & \norma{\partial_{xx}^2 g}_{\L\infty([0,t] \times [a,b] \times [-\mathcal{P}(t), \, \mathcal{P}(t)])}
    \\
    \leq \
    & \norma{\partial_{xx}^2 f}_{\L\infty([0,t] \times [a,b] \times [-\mathcal{P}(t), \, \mathcal{P}(t)] \times [-J(t), J(t)])}
    \\
    & +
      2 \, \norma{\partial_{x R}^2 f}_{\L\infty([0,t] \times [a,b] \times [-\mathcal{P}(t), \, \mathcal{P}(t)] \times [-J(t), J(t)])} \, \norma{\partial_{x} R}_{\L\infty([0,t] \times[a,b])}
    \\
    & +  \norma{\partial_{R R}^2 f}_{\L\infty([0,t] \times [a,b] \times [-\mathcal{P}(t), \, \mathcal{P}(t)] \times [-J(t), J(t)])}\,  \norma{\partial_{x} R}^2_{\L\infty([0,t] \times[a,b])}
    \\
    & + \norma{\partial_{R} f}_{\L\infty([0,t] \times [a,b] \times [-\mathcal{P}(t), \, \mathcal{P}(t)] \times [-J(t), J(t)])}\, \norma{\partial_{xx}^2 R}_{\L\infty([0,t] \times[a,b])}
    \\
    \leq \
    & C \, \mathcal{P}_{\infty}(t) \left(
      1 +
      \mathcal{R}_1 (t) \left( 2 \, \mathcal{L}
      + \mathcal{L}^2 \, \mathcal{R}_1 (t)
      +  \mathcal{W}
      \right)
      \right),
  \end{align*}
  so that $\mathcal{K}(t) \leq \hat{\mathcal{K}} (t)$ where we set
  \begin{equation}
    \label{eq:hatK}
    \hat{\mathcal{K}} (t) =
    2 \, (b-a) \, C \, \mathcal{P}_{\infty}(t) \!\left[
      1 +
      \mathcal{R}_1 (t) \left( 2 \, \mathcal{L}
        + \mathcal{L}^2 \, \mathcal{R}_1 (t)
        +  \mathcal{W}
      \right)
    \right]\!
    + \frac12\left(3 \, \mathcal{P}_{\infty} (t) + \norma{\sigma_a}_{\L\infty ([0,t])}\right)
    \mathcal{C}_3 (t),
  \end{equation}
  and we then obtain
  \begin{equation}
    \label{eq:ptv}
    \tv(\pi (t)) \leq
    \left(
      \tv(\sigma_o) + \tv(\sigma_a;[0,t]) + \tv(\sigma_b;[0,t])
      + \hat{\mathcal{K}}(t) \, t
    \right) e^{\mathcal{C}_5(t) \, t}.
  \end{equation}

  For $t>0$, compute
  \begin{equation}
    \label{eq:20}
    \norma{\rho (t) - \sigma (t)}_{\L1 (]a,b[)} \leq
    \norma{\rho (t) - \pi (t)}_{\L1 (]a,b[)}
    +
    \norma{\pi (t) - \sigma (t)}_{\L1 (]a,b[)}.
  \end{equation}
  The first term on the right hand side of~\eqref{eq:20} evaluates the
  distance between solutions to IBVPs of the type considered in the
  Appendix~\ref{sec:app} with the same flux function, but different
  initial and boundary data. Therefore, we can apply
  Proposition~\ref{prop:bohLip}, to get
  \begin{align*}
    \norma{\rho (t) - \pi (t)}_{\L1 (]a,b[)} \leq \
    & \norma{\rho_o- \sigma_o}_{\L1 (]a,b[)}
    \\
    &+ \norma{\partial_u g}_{\L\infty ([0,t]\times [a,b] \times \reali)}
      \left( \norma{\rho_a - \sigma_a}_{\L1 ([0,t])} +
      \norma{\rho_b - \sigma_b}_{\L1 ([0,t])} \right).
  \end{align*}
  Due to the definition of $g$ in~\eqref{eq:32}, we obtain
  \begin{align*}
    \norma{\partial_u g}_{\L\infty([0,t]\times [a,b] \times \reali)} =
    \norma{\partial_\rho f}_{\L\infty([0,t]\times [a,b] \times
    \reali\times\reali)} < L,
  \end{align*}
  hence
  \begin{equation}
    \label{eq:20aOK}
    \norma{\rho (t) - \pi (t)}_{\L1 (]a,b[)} \leq
    \norma{\rho_o - \sigma_o}_{\L1 (]a,b[)}
    + L \left(
      \norma{\rho_a - \sigma_a}_{\L1 ([0,t])} +
      \norma{\rho_b - \sigma_b}_{\L1 ([0,t])}
    \right).
  \end{equation}

  On the other hand, the second term on the right hand side
  of~\eqref{eq:20} evaluates the distance between solutions to IBVPs
  of the type considered in Appendix~\ref{sec:app} with different flux
  functions, but same initial and boundary data. We apply
  Theorem~\ref{thm:bohStab} to obtain
  \begin{align}
    \nonumber
    & \norma{\pi (t) - \sigma (t)}_{\L1 (]a,b[)}
    \\ \label{eq:30}
    \leq \
    & \int_0^t\int_a^b \norma{\partial_x (g-h) (s,x, \cdot)}_{\L\infty (U (s))}\d{x}\d{s}
    \\ \label{eq:31}
    & + \int_0^t \norma{\partial_u (g-h) (s, \cdot, \cdot)}_{\L\infty (]a,b[\times U (s))} \,
      \min\left\{
      \tv  \left(\sigma (s)\right),  \,\tv  \left(\pi (s)\right)
      \right\}\d{s}
    \\ \label{eq:33}
    & + 2 \int_0^t \norma{(g-h)(s,a, \cdot)}_{\L\infty(U (s))} \d{s}
      +2 \int_0^t \norma{(g-h)(s,b, \cdot)}_{\L\infty(U(s))} \d{s},
  \end{align}
  where $U (s) = [-\mathcal{U} (s), \mathcal{U} (s)]$, with
  $\mathcal{U} (s) = \max\left\{\norma{\pi (s)}_{\L\infty (]a,b[)}, \,
    \norma{\sigma (s)}_{\L\infty (]a,b[)}\right\}$.  Let us now
  estimate all the terms appearing in~\eqref{eq:30}--\eqref{eq:33}.
  First of all, by Theorem~\ref{thm:main},
  \begin{equation}
    \label{eq:Sinf}
    \norma{\sigma (t)}_{\L\infty (]a,b[)} \leq \mathcal{S}_{\infty} (t) =
    e^{t \, C \, \left(1 + \mathcal{L} \, \mathcal{S}_1 (t)\right)}
    \max\left\{\norma{\sigma_o}_{\L\infty (]a.b[)}, \,
      \norma{\sigma_a}_{\L\infty ([0,t])}, \, \norma{\sigma_b}_{\L\infty ([0,t])}\right\},
  \end{equation}
  so that, comparing~\eqref{eq:Sinf} with~\eqref{eq:pinf} we obtain
  \begin{equation}
    \label{eq:Us}
    \mathcal{U} (t) \leq  \max\left\{\norma{\sigma_o}_{\L\infty (]a.b[)}, \,
      \norma{\sigma_a}_{\L\infty ([0,t])}, \, \norma{\sigma_b}_{\L\infty ([0,t])}\right\}
    \exp\left( t\, C \, \left(1 + \mathcal{L} \, \mathcal{S}_1 (t)\right)
      + t \,\mathcal{C}_5(t)\right).
  \end{equation}
  Then, by Theorem~\ref{thm:main},
  \begin{equation}
    \label{eq:stv}
    \tv (\sigma(t)) \leq
    e^{t \, \mathcal{T}_1 (t)} \left(\tv (\sigma_o) + \tv (\sigma_a; [0,t])
      + \tv (\sigma_b;[0,t])\right)
    + \frac{\mathcal{T}_2 (t)}{\mathcal{T}_1 (t)} (e^{t \, \mathcal{T}_1 (t)} -1)
  \end{equation}
  with
  \begin{align*}
    \mathcal{T}_1(t) =\
    & \norma{\partial_{\rho x}^2 f}_{\L\infty ([0,t] \times [a,b]\times \reali^2 )}
      +  \mathcal{L}\, \mathcal{S}_1 (t) \,
      \norma{\partial_{\rho R}^2 f}_{\L\infty ([0,t] \times [a,b]\times \reali^2 )},
    \\
    \mathcal{T}_2(t) = \
    & \mathcal{K}_2 (t)
      + \frac32 \, C (1 + \mathcal{L}\, \mathcal{S}_1 (t)) \,
      \mathcal{S}_\infty (t)
      + \left[\mathcal{K}_3 (t) +\frac{C}{2} \, (1 + \mathcal{L}\,
      \mathcal{S}_1 (t)) \right] \norma{\sigma_a}_{\L\infty ([0,t])},
    \\
    \mathcal{K}_2 (t) = \
    & C \, \mathcal{S}_1 (t) \, \left(
      1 + 2 \, \mathcal{L}\, \mathcal{S}_1 (t)  +2 \, \mathcal{K}_3 (t)
      \right),
    \\
    \mathcal{K}_3 (t) = \
    & C \, \mathcal{S}_1 (t) \left(\mathcal{L}^2 \, \mathcal{S}_1 (t)
      + \frac{1}{2} \,  \mathcal{W} \, \right).
  \end{align*}
  Hence, comparing~\eqref{eq:ptv} and~\eqref{eq:stv}, we get
  \begin{align}
    \nonumber
    \min\left\{
    \tv  \left(\sigma (s)\right),  \,\tv  \left(\pi (s)\right)
    \right\}
    \leq \
    &
      e^{t \, \mathcal{T}_3 (t)} \left(\tv (\sigma_o) + \tv (\sigma_a; [0,t])
      + \tv (\sigma_b;[0,t])\right)
    \\
    \label{eq:T4}
    & + \min\left\{\hat{\mathcal{K}(t)} \, e^{\mathcal{C}_5(t) \, t} , \,
      \frac{\mathcal{T}_2 (t)}{\mathcal{T}_1 (t)} (e^{t \, \mathcal{T}_1 (t)} -1)
      \right\}
    \\
    \nonumber
    =: \
    & \mathcal{T}_4(t),
  \end{align}
  with
  \begin{equation}
    \label{eq:T3}
    \mathcal{T}_3(t) =
    \norma{\partial_{\rho x}^2 f}_{\L\infty ([0,t] \times [a,b]\times \reali^2 )}
    +  \mathcal{L}\, \min\left\{\mathcal{R}_1(t), \,\mathcal{S}_1 (t) \right\}
    \norma{\partial_{\rho R}^2 f}_{\L\infty ([0,t] \times [a,b]\times \reali^2 )}.
  \end{equation}

  Focus on~\eqref{eq:30}: by~\ref{f}, \eqref{eq:RS}
  and~\eqref{eq:RSx},
  \begin{align*}
    & \modulo{\partial_x\left( g(t,x,u) - h(t,x,u)\right)}
    \\
    = \
    & \modulo{\frac{\d{}}{\d{x}}
      \left(f\left(t,x,u,R (t,x)\right) - f\left(t,x,u,S(t,x)\right)\right) }
    \\
    \leq \
    &\modulo{ \partial_x f (t,x,u,R(t,x)) - \partial_x f (t,x,u,S(t,x)) }
    \\
    & + \modulo{ \partial_R f (t,x,u,R(t,x)) \, \partial_x R (t,x)
      - \partial_R f (t,x,u,S(t,x))\, \partial_x S (t,x)}
    \\
    \leq \
    & \modulo{\partial_{xR}^2 f (t,x,u, R_1(t,x))}  \modulo{R (t,x) - S (t,x)}
    \\
    & + \modulo{\partial_{RR}^2 f (t,x,u,R_2(t,x))} \modulo{\partial_x R (t,x)}
      \modulo{R (t,x) - S (t,x)}
    \\
    & + \modulo{ \partial_R f (t,x,u,S(t,x)) }
      \modulo{\partial_x R (t,x) - \partial_x S (t,x)}
    \\
    \leq \
    & C \, \modulo{u} \, \left[ \frac{\norma{\omega}_{\L\infty}}{K_\omega}
      \left( 1 + \mathcal{L} \, \mathcal{R}_1 (t)\right)
      + \mathcal{L}
      \right]
      \, \int_a^b \modulo{\rho (t,y) - \sigma (t,y)}\d{y},
  \end{align*}
  with $R_i(t,x) \in \mathcal{I}\left(R (t,x), S (t,x)\right)$,
  $i=1,2$.  Hence,
  \begin{equation}
    \label{eq:30ok}
    \norma{\partial_x(g-h) (s, x,\cdot)}_{\L\infty (U(s))}
    \leq
    C \, \mathcal{U} (s) \left[ \frac{\norma{\omega}_{\L\infty}}{K_\omega}
      \left( 1 + \mathcal{L} \, \mathcal{R}_1 (s)\right)
      + \mathcal{L}
    \right] \int_a^b \modulo{\rho (s,y) - \sigma (s,y)}\d{y}.
  \end{equation}
  Pass to~\eqref{eq:31}: by~\ref{f}, \eqref{eq:RS} and~\eqref{eq:RSx},
  \begin{align*}
    \modulo{\partial_u\left( g(t,x,u) - h(t,x,u)\right)}
    = \
    &
      \modulo{ \partial_\rho \left(f\left(t,x,u,R (t,x)\right) -
      f\left(t,x,u,S(t,x)\right)\right)}
    \\
    = \
    & \modulo{\partial_{\rho R}^2 f\left(t,x,u,R_3(t,x)\right)}
      \modulo{R (t,x) - S (t,x)},
  \end{align*}
  and therefore
  \begin{equation}
    \label{eq:31ok}
    \norma{\partial_u (g-h) (s, \cdot,\cdot)}_{\L\infty (]a,b[ \times U(s))}
    \leq
    \norma{\partial_{\rho R}^2 f}_{\L\infty ([0,s] \times [a,b] \times U (s) \times \reali )}
    \frac{\norma{\omega}_{\L\infty}}{K_\omega}\!
    \int_a^b\! \modulo{\rho (s,y) - \sigma (s,y)}\d{y}.
  \end{equation}
  Finally, consider the first integral in~\eqref{eq:33}: by~\ref{f}
  and~\eqref{eq:RS}
  \begin{align*}
    \modulo{g (t,a,u) - h(t,a,u)} = \
    & \modulo{f\left(t,a,u,R(t,a)\right) - f\left(t,a,u,S(t,a)\right)}
    \\
    \leq\
    &\modulo{\partial_R f\left(t,a,u, \tilde{R} (t,a)\right)}
      \modulo{R(t,a) -S(t,a)}
    \\
    \leq \
    & C \modulo{u} \, \frac{\norma{\omega}_{\L\infty}}{K_\omega}\!
      \int_a^b \modulo{ \rho(t,y) - \sigma (t,y)} \d{y}
  \end{align*}
  with $\tilde{R}(t,a) \in \mathcal{I}\left(R (t,a), S (t,a)\right)$,
  so that
  \begin{equation}
    \label{eq:33ok}
    \norma{(g-h)(s,a, \cdot)}_{\L\infty (U(s))} \leq
    C \, \mathcal{U} (s) \,  \frac{\norma{\omega}_{\L\infty}}{K_\omega}
    \int_a^b \modulo{ \rho(s,y) - \sigma (s,y)} \d{y},
  \end{equation}
  and similarly for the second integral in~\eqref{eq:33}.

  Collecting together~\eqref{eq:30ok}, \eqref{eq:31ok}
  and~\eqref{eq:33ok}, exploiting~\eqref{eq:Us} and~\eqref{eq:T4}, we
  obtain
  \begin{align}
    \nonumber
    & \norma{\pi (t) - \sigma (t)}_{\L1 (]a,b[)}
    \\
    \nonumber
    \leq \
    &  \Bigl\{(b-a) \,  C \, \mathcal{U} (t)
      \left[ \frac{\norma{\omega}_{\L\infty}}{K_\omega}
      \left( 1 + \mathcal{L} \, \mathcal{R}_1 (t)\right)
      + \mathcal{L}
      \right]
      +
      \norma{\partial_{\rho R}^2 f}_{\L\infty ([0,t] \times [a,b] \times U (t) \times \reali )}
      \frac{\norma{\omega}_{\L\infty}}{K_\omega} \,
      \mathcal{T}_4(t)
    \\
    \label{eq:20bOK}
    & \quad+
      4 \, C \, \mathcal{U} (t) \,  \frac{\norma{\omega}_{\L\infty}}{K_\omega}\Bigr\}
      \int_0^t \int_a^b \modulo{\rho (s,y) - \sigma (s,y)}\d{y} \d{s}.
  \end{align}

  Insert now~\eqref{eq:20aOK} and~\eqref{eq:20bOK} into~\eqref{eq:20}:
  \begin{displaymath}
    \norma{\rho (t) - \sigma (t)}_{\L1 (]a,b[)} \leq
    A(t) + B(t)  \int_0^t  \norma{\rho (s) - \sigma (s)}_{\L1 (]a,b[)} \d{s},
  \end{displaymath}
  where
  \begin{align}
    \nonumber
    A(t) = \
    & \norma{\rho_o - \sigma_o}_{\L1 (]a,b[)}
      + L \left(
      \norma{\rho_a - \sigma_a}_{\L1 ([0,t])} +
      \norma{\rho_b - \sigma_b}_{\L1 ([0,t])}
      \right),
    \\
    \label{eq:Bt}
    B(t) = \
    &  (b-a) \,  C \, \mathcal{U} (t)
      \left[ \frac{\norma{\omega}_{\L\infty}}{K_\omega}
      \left( 1 + \mathcal{L} \, \mathcal{R}_1 (t)\right)
      + \mathcal{L}
      \right]
      + 4 \, C \, \mathcal{U} (t) \,  \frac{\norma{\omega}_{\L\infty}}{K_\omega}
    \\
    \nonumber
    & +
      \norma{\partial_{\rho R}^2 f}_{\L\infty ([0,t] \times [a,b] \times U (t) \times \reali )}
      \frac{\norma{\omega}_{\L\infty}}{K_\omega}\,
      \mathcal{T}_4(t)
  \end{align}
  with $\mathcal{U}(t)$ as in~\eqref{eq:Us}, $\mathcal{L}$ as
  in~\eqref{eq:L}, $\mathcal{R}_1$ as in~\eqref{eq:R1},
  $\mathcal{T}_4(t)$ as in~\eqref{eq:T4}.  An application of
  Gronwall's Lemma yields the desired estimate:
  \begin{align*}
    \norma{\rho (t) - \sigma (t)}_{\L1 (]a,b[)}
    \leq \ &
             A(t) +
             \int_0^t A(s) B(s) e^{\int_s^t B(\tau) \d \tau} \d s \\
    \leq \
           & A(t) + B(t)
             \int_0^t A(s) e^{B(t) (t-s)} \d s
    \\
    \leq \
           &
             A(t) \, \left(
             1 + B(t) \, t \, e^{B(t) \, t}
             \right).
  \end{align*}
\end{proof}

\section{Proof of Theorem~\ref{thm:main}}
\label{sec:proof}
\begin{proofof}{Theorem~\ref{thm:main}}
  The existence of solutions to problem~\eqref{eq:original} follows
  from the results of Section~\ref{sec:existence}, in particular
  \S~\ref{sec:conv}. The uniqueness is ensured by the Lipschitz
  continuous dependence of solutions to~\eqref{eq:original} on initial
  and boundary data, see Section~\ref{sec:lipDep}.

  The estimates on the solution to~\eqref{eq:original} are obtained
  from the corresponding discrete estimates passing to the limit. In
  particular, the $\L1$ bound follows from~\eqref{eq:l1}, the
  $\L\infty$ bound from~\eqref{eq:linf}, the total variation bound
  from~\eqref{eq:spaceBV} and the Lipschitz continuity in time
  from~\eqref{eq:miserve}, since $\dx = \frac{\dt}{\lambda}$ and
  taking $\lambda= \frac{1}{3 \, L}$.
\end{proofof}

\begin{appendices}
  \section{The local 1D IBVP}
  \label{sec:app}

  We recall below some results concerning the classical (local) one
  dimensional initial boundary value problem for a scalar conservation
  laws. Detailed proofs can be found in~\cite{1d}, which deals with
  the more general case of a balance law.

  Fix $T>0$, set $I = \, ]0,T[$ and consider the IBVP:
  \begin{equation}\label{eq:boh}
    \left\{
      \begin{array}{l@{\qquad}r@{\,}c@{\,}l}
        \del_t u + \d{}_x f(t,x,u) =0,
        & (t,x)
        &\in
        & I\times ]a,b[,
        \\
        u(0,x) = u_o(x),
        & x
        &\in
        &]a,b[,
        \\
        u(t,a) = u_a (t),
        & t
        &\in
        & I,
        \\
        u(t,b) = u_b (t),
        & t
        &\in
        & I.
      \end{array}
    \right.
  \end{equation}
  Above, the notation for $\d{}_x f\left(t,x,u(t,x)\right)$ follows
  closely that introduced in~\eqref{eq:Dx}, that is:
  \begin{displaymath}
    \d{}_x f \left( t,x,u(t,x)\right) = \partial_x f \left( t,x,u(t,x)\right)
    + \partial_u f  \left( t,x,u(t,x)\right) \, \partial_x u(t,x).
  \end{displaymath}

  Recall the definition of solution to~\eqref{eq:boh}. In particular,
  we focus on the adaptation to the present one dimensional setting of
  the definition of solution provided by Bardos, le Roux and
  N\'ed\'elec~\cite[p.~1028]{BardosLerouxNedelec}.
  \begin{definition}
    \label{def:BLN}
    A function $u\in (\L\infty \cap \BV) (I \times ]a,b[; \reali)$ is
    an entropy weak solution to problem~\eqref{eq:boh} if for all test
    function $\phi \in \Cc1 (\,]-\infty,T[ \times \reali; \reali^+)$
    and $k\in \reali$
    \begin{align*}
      & \int_0^T \int_{a}^{b}  \left\{ \modulo{u(t,x)  -k} \partial_t \phi (t,x)
        + \sgn\left(u (t,x) - k\right) \left[
        f\left(t, x, u(t,x)\right) - f\left(t, x, k\right)
        \right] \, \partial_x \phi (t,x)
        \right.
      \\
      & \qquad\qquad \left.
        - \sgn\left(u(t,x) -k\right) \partial_x f \left(t, x, k\right) \,  \phi (t,x) \right\}
        \d{x} \d{t}
      \\
      & + \int_{a}^{b}  \modulo{u_o(x)-k}  \phi(0,x)  \d{x}
      \\
      & + \int_0^T
        \sgn \left(  u_a(t)  - k \right)
        \left[
        f \left(t, a, u (t, a^+)\right) - f\left(t, a, k\right)
        \right]\,  \phi(t,a)  \d{t}
      \\
      & - \int_0^T
        \sgn \left(  u_b(t)  - k \right)
        \left[
        f\left(t, b, u (t,b^-) -  f \left(t, b, k\right) \right)
        \right] \,\phi(t,b) \d{t}
        \geq  0.
    \end{align*}
  \end{definition}

  The well-posedness of problem~\eqref{eq:boh}, some \emph{a priori}
  estimates on its solution and the stability of its solution with
  respect to variations in the flux function are proved
  in~\cite{1d}. We report the results below, adapted to the present
  setting without source term. 

  \begin{theorem}{\cite[Theorem~2.4]{1d}}\label{thm:bohEU}
    Let $f \in \C2([0,T] \times [a,b] \times \reali;\reali)$, with
    $\partial_u f, \, \partial_{xu}^2 f \in \L\infty ([0,T] \times
    [a,b] \times \reali;\reali)$. Let
    $u_o \in \BV(\,]a,b[; \reali^+)$,
    $u_a, \, u_b \in \BV(I; \reali^+)$.

    Then the IBVP~\eqref{eq:boh} has a unique solution
    $u \in (\L\infty \cap \BV)(I \times ]a,b[; \reali)$, satisfying
    \begin{align}
      \label{eq:bohinf}
      & \norma{u(t)}_{\L\infty(]a,b[)} \leq \mathcal{U}(t),
      \\
      \label{eq:bohtv}
      & \tv(u(t)) \leq e^{\mathcal{C}_4(t) \, t} \left( \tv(u_o) +
        \tv(u_a;[0,t]) + \tv(u_b;[0,t]) + \mathcal{K}(t) \, t
        \right),
    \end{align}
    for any $t \in [0,T[$, with
    \begin{align*}
      \mathcal{U}(t) = \
      & \left(
        \max\left\{\norma{u_o}_{\L\infty(]a,b[)}, \,
        \norma{u_a}_{\L\infty([0,t])},
        \norma{u_b}_{\L\infty([0,t])}\right\} + \mathcal{C}_3(t)
        \, t \right) e^{\mathcal{C}_4(t) \, t},
      \\
      \mathcal{C}_3(t) = \
      & \norma{\partial_x f (\cdot, \cdot, 0)}_{\L\infty([0,t] \times [a,b])},
      \\
      \mathcal{C}_4(t) = \
      & \norma{\partial_{xu}^2 f}_{\L\infty([0,t] \times [a,b] \times \reali)},
      \\
      \mathcal{K}(t) = \
      & 2 \, \mathcal{C}_3(t) + 2 \, (b-a) \, \norma{\partial_{xx}^2 f}_{\L\infty([0,t] \times [a,b] \times [-\mathcal{U}(t), \, \mathcal{U}(t)])}
      \\
      & + \frac12 \left( 3 \, \mathcal{U}(t) +
        \norma{u_a}_{\L\infty([0,t])}\right) \norma{\partial_{xu}^2
        f}_{\L\infty([0,t] \times [a,b] \times [-\mathcal{U}(t), \,
        \mathcal{U}(t)])}.
    \end{align*}
  \end{theorem}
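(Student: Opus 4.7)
The plan is to mirror the strategy deployed for the non-local problem in Section~\ref{sec:existence}, exploiting the fact that the absence of the non-local argument $R$ makes every estimate strictly simpler: all terms of the form $\Rh{j+3/2}-\Rh{j+1/2}$, $\partial_R f$, $\partial_{RR}^2 f$, $\partial_{\rho R}^2 f$ disappear. I would first introduce a Lax--Friedrichs scheme as in~\eqref{eq:numFl}--\eqref{eq:scheme} with numerical flux
\begin{displaymath}
  F^n_{j+1/2}(u,v)=\tfrac{1}{2}\!\left[f(t^n,x_{j+1/2},u)+f(t^n,x_{j+1/2},v)-\alpha(v-u)\right],
\end{displaymath}
under the CFL condition $\alpha\geq\norma{\partial_u f}_{\L\infty}$ and $\lambda$ chosen so that the coefficients $\beta_j^n,\gamma_j^n$ (cf.~\eqref{eq:betajn}--\eqref{eq:gammajn}) lie in $[0,1/3]$. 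Existence of a subsequential limit will then follow from Helly's theorem once the uniform $\L\infty$ and $\BV$ bounds are established.

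For the $\L\infty$ estimate, I would write the scheme as a convex combination exactly as in~\eqref{eq:1} and control the remainder $\lambda\snr{F^n_{j+1/2}(u_j^n,u_j^n)-F^n_{j-1/2}(u_j^n,u_j^n)}$ by $\dt\,\snr{\partial_x f(t^n,\bar{x},u_j^n)}\leq\dt\,(\mathcal{C}_3(t^n)+\mathcal{C}_4(t^n)\snr{u_j^n})$, since $\partial_x f(t,x,u)=\partial_x f(t,x,0)+\int_0^u\partial_{xu}^2 f(t,x,s)\d{s}$. An iterative Gronwall-type argument yields~\eqref{eq:bohinf}. For the $\BV$ estimate in space, I would follow Proposition~\ref{prop:BV} and decompose $u_{j+1}^{n+1}-u_j^{n+1}=\mathcal{A}_j^n-\lambda\mathcal{B}_j^n$; the term $\mathcal{A}_j^n$ is again a convex combination whose contribution is controlled as in~\eqref{eq:Ajn}, while $\mathcal{B}_j^n$ collapses to second-order Taylor expansions of $f$ in $x$ only, producing factors $\dx\,\partial_{xu}^2 f$ on the jumps $u_{j+1}^n-u_{j-1}^n$ and $\dx^2\partial_{xx}^2 f$ on the values $\snr{u_j^n}$. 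Summing over $j$ and combining with the boundary-cell estimates (obtained as in~\eqref{eq:diff1a}--\eqref{eq:diffNb}) produces the discrete analogue of~\eqref{eq:bohtv}, with $\mathcal{C}_4(t)$ playing the role of $\mathcal{K}_1$ and $\mathcal{K}(t)$ that of $\mathcal{K}_4$.

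Once the limit function $u$ exists, passing from the discrete entropy inequality (analogue of Lemma~\ref{lem:Eineq}) to the entropy formulation of Definition~\ref{def:BLN} proceeds along the lines of Lemma~\ref{lem:entropyLimit}: summation by parts in the interior yields the volume integrals; the handling of $T^b$ versus $S^b$ and the use of the bounds $-\alpha(v-k)^+\leq G^{n,k}_{j+1/2}(u,v)\leq\alpha(u-k)^+$ at the two endpoints deliver exactly the BLN boundary contributions. Uniqueness follows by Kruzhkov's doubling of variables adapted to the boundary as in~\cite{BardosLerouxNedelec}, which in turn gives the $\L1$ stability with respect to $(u_o,u_a,u_b)$.

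The main obstacle is the BLN-type boundary treatment: rigorously identifying the traces $u(t,a^+), u(t,b^-)$ of the $\BV$ limit and verifying that the boundary integrals produced by the Helly limit coincide with those prescribed in Definition~\ref{def:BLN} is significantly more delicate than the interior passage to the limit, and requires either the trace-theoretic approach of~\cite{Vovelle2002} (via Definition~\ref{def:MV}) or a careful analysis of the boundary cells $j=1,N$ in the spirit of~\eqref{eq:diff1a}--\eqref{eq:diffNb} combined with the equivalence of the two definitions on $\BV$ functions mentioned in the paper.
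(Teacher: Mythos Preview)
Your proposal is sound, but note that the paper does not actually prove Theorem~\ref{thm:bohEU}: it is quoted verbatim from~\cite[Theorem~2.4]{1d}, and the appendix explicitly states that ``detailed proofs can be found in~\cite{1d}''. So there is no proof in the present paper to compare against; the statement is imported as a black box to be used in Section~\ref{sec:lipDep}.

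That said, your reconstruction is precisely the route taken in~\cite{1d}: the same Lax--Friedrichs scheme, the same convex-combination rewriting for the $\L\infty$ bound, the same $\mathcal{A}_j^n/\mathcal{B}_j^n$ decomposition for the $\BV$ estimate, and the same $T^{int}/T^b$ versus $S^{int}/S^b$ comparison for the entropy limit. Your observation that the $R$-dependent terms simply drop out is exactly why the constants $\mathcal{C}_3,\mathcal{C}_4,\mathcal{K}$ in Theorem~\ref{thm:bohEU} involve only $\partial_x f,\partial_{xu}^2 f,\partial_{xx}^2 f$, whereas the non-local analogues $\mathcal{K}_1,\ldots,\mathcal{K}_4$ carry the additional $\mathcal{L},\mathcal{W}$ factors coming from the kernel. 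You also correctly identify the one genuinely delicate point: the passage to the BLN boundary terms is handled in~\cite{1d} via the Martin--Vovelle formulation (your Definition~\ref{def:MV}) together with the equivalence of the two definitions on $\BV$ functions, rather than by a direct trace argument.
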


\begin{proposition}{\cite[Proposition~3.7]{1d}}\label{prop:bohLip}
  Let $f \in \C2([0,T] \times [a,b] \times \reali;\reali)$, with
  $\partial_u f, \, \partial_{xu}^2 f \in \L\infty ([0,T] \times [a,b]
  \times \reali;\reali)$. Let $u_o,\, v_o \in\BV (\,]a,b[; \reali^+)$,
  $u_a, \, u_b, \, v_a, \, v_b \in \BV(I; \reali^+)$. Call $u$ and $v$
  the corresponding solutions to the IBVP~\eqref{eq:boh}. Then, for
  all $t>0$, the following estimate holds
  \begin{align*}
    \norma{u(t)-v(t)}_{\L1(]a,b[)} \leq \
    & \norma{u_o - v_o}_{\L1(]a,b[)}
    \\
    & + \norma{\partial_u f}_{\L\infty([0,t] \times [a,b] \times \reali)}
      \left( \norma{u_a - v_a}_{\L1([0,t])} +  \norma{u_b - v_b}_{\L1([0,t])} \right).
  \end{align*}
\end{proposition}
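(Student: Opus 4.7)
The plan is to apply Kruzhkov's doubling-of-variables technique to the two entropy weak solutions $u,v$, using in the interior the entropy inequality of Definition~\ref{def:BLN} and on the boundary the Martin--Vovelle / Otto semi-entropy formulation (the local analogue of Definition~\ref{def:MV}, with $R$ absent and $L=\norma{\partial_u f}_{\L\infty}$ in place of $\norma{\partial_\rho f}_{\L\infty}$). The Otto formulation is preferred on the boundary because its contribution $L(u_a-k)^\pm$ already contains the desired Lipschitz constant $L$ and depends only on the prescribed datum, not on the trace of the solution, so the target coefficient can be read off directly.

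In the interior, I would pick a non-negative $\psi\in \Cc1([0,T[\times\reali;\reali^+)$, a standard mollifier $\zeta_\varepsilon$, and form the two-variable test function $\Phi_\varepsilon(t,x,s,y)=\psi\!\bigl(\tfrac{t+s}{2},\tfrac{x+y}{2}\bigr)\,\zeta_\varepsilon(t-s)\,\zeta_\varepsilon(x-y)$. Writing the $(\,\cdot\,)^+$ semi-entropy inequality for $u$ with $k=v(s,y)$, the symmetric one for $v$ with $k=u(t,x)$, and the two corresponding $(\,\cdot\,)^-$ inequalities, then adding all four, integrating, and letting $\varepsilon\to 0$, the interior terms collapse on the diagonal $\{t=s,\,x=y\}$. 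This step is legitimised by the $\L\infty$ and $\BV$ regularity ensured by Theorem~\ref{thm:bohEU}. The $\partial_t+\partial_x[f(\cdots)]$ combinations contract to the standard Kato expression $\snr{u-v}\,\partial_t\psi+\sgn(u-v)[f(t,x,u)-f(t,x,v)]\partial_x\psi$, and the initial data yield $\int_a^b\snr{u_o-v_o}\psi(0,x)\d{x}$.

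On the boundary, the doubling produces at $x=a$ a sum of semi-entropy pieces of the form $L\int[(u_a(t)-v(s,y))^\pm+(v_a(s)-u(t,x))^\pm]\,\Phi_\varepsilon(\,\cdot\,,a,\,\cdot\,,\,\cdot\,)$; after the $\varepsilon\to 0$ diagonal limit and summing the $\pm$ variants, this yields $L\int\bigl[\snr{u_a-v(\,\cdot\,,a^+)}+\snr{v_a-u(\,\cdot\,,a^+)}\bigr]\psi(t,a)\d{t}$, and analogously at $b$. A further use of the BLN admissibility of each of $u$ and $v$ at each endpoint (providing the sign condition on the entropy boundary flux against Riemann problems joining the trace to the prescribed datum) allows one to absorb the remaining trace--data differences and bound the entire boundary contribution by $L\int\snr{u_a-v_a}\psi(t,a)\d{t}+L\int\snr{u_b-v_b}\psi(t,b)\d{t}$. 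Specialising $\psi=\chi_n(t)\eta_n(x)$ with $\chi_n$ a smooth approximation of $\mathbf{1}_{[0,\bar t]}$ for a fixed $\bar t\in\,]0,T[$ and $\eta_n$ equal to $1$ on $[a,b]$ with $\partial_x\eta_n$ supported outside $[a,b]$, passing to the limit so that $\partial_t\psi_n\rightharpoonup-\delta_{\bar t}$ and $\partial_x\psi_n$ vanishes on $]a,b[$, and renaming $\bar t$ to $t$, yields the announced estimate.

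The main obstacle is the trace-elimination at the boundary: the doubling inevitably produces semi-entropy pieces that still involve the internal traces $u(\,\cdot\,,a^+),\,v(\,\cdot\,,a^+)$, and to close the estimate by $L\snr{u_a-v_a}$ one must exploit the BLN admissibility of both solutions at the endpoint in a symmetric fashion, together with the monotonicity information encoded in $\norma{\partial_u f}_{\L\infty}\leq L$. All the other steps (diagonal limit in the doubling, initial-datum contribution, specialisation of $\psi$) are routine Kruzhkov manipulations enabled by the $\C2$-regularity of $f$ and the $\BV$ bound of Theorem~\ref{thm:bohEU}.
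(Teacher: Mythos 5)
The paper does not actually prove this proposition: it is recalled in Appendix~A as \cite[Proposition~3.7]{1d}, so there is no in-paper argument to set yours against, and I can only measure your proposal against the standard proof carried out in~\cite{1d}. Your overall strategy --- Kruzhkov doubling of variables in the interior to reach the Kato inequality for $\modulo{u-v}$, plus an exploitation of the boundary admissibility of \emph{both} solutions to convert the boundary contribution into $L\,\modulo{u_a-v_a}$ and $L\,\modulo{u_b-v_b}$ --- is the right one, and you correctly identify the boundary step as the crux.

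The gap is precisely there, and it is not merely a postponed detail. The intermediate quantity you write down, $L\int\bigl[\snr{u_a-v(\cdot,a^+)}+\snr{v_a-u(\cdot,a^+)}\bigr]\psi(t,a)\d{t}$, cannot be ``absorbed'' into $L\int\snr{u_a-v_a}\,\psi(t,a)\d{t}$: it does not even vanish when $u_a\equiv v_a$, because the internal traces generally differ from the prescribed data. Worse, your final test-function choice, with $\partial_x\eta_n$ supported outside $[a,b]$, annihilates the Kruzhkov flux term $\sgn(u-v)\left[f(t,x,u)-f(t,x,v)\right]\partial_x\psi$ on $]a,b[$, so after that choice there is no flux-trace term left against which the BLN inequalities could be played, and the estimate cannot close. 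The proof must instead keep those traces: take $\eta_n\to 1$ from the \emph{inside} of $]a,b[$, so that the Kato inequality produces the boundary flux terms $\sgn\bigl(u(t,a^+)-v(t,a^+)\bigr)\bigl[f\bigl(t,a,u(t,a^+)\bigr)-f\bigl(t,a,v(t,a^+)\bigr)\bigr]$ (and the analogue at $b$ with the opposite sign), and then invoke the classical two-solution boundary lemma: if both traces satisfy the BLN condition at $a$ with data $u_a$, $v_a$, then this quantity is bounded below by $-\norma{\partial_u f}_{\L\infty}\modulo{u_a(t)-v_a(t)}$. That lemma is the real content of the proposition; it is proved by a case analysis on the relative order of the two traces and the two data and does not follow from any formula displayed in your outline. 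Once it is in place, integrating the Kato inequality over $[0,\bar t\,]\times[a,b]$ yields the stated estimate with the constant $\norma{\partial_u f}_{\L\infty}$.
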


\begin{theorem}{\cite[Theorem~2.6]{1d}}\label{thm:bohStab}
  Let $f_1, f_2 \in \C2([0,T] \times [a,b] \times \reali;\reali)$,
  with $\partial_u
  f_i$,$ \partial_{xu}^2 f_i \in \L\infty ([0,T] \times [a,b] \times
  \reali;\reali)$ for $i=1,2$. Let $u_o \in \BV (\,]a,b[; \reali^+)$,
  $u_a, \, u_b \in \BV (I; \reali^+)$.  Call $u_1$ and $u_2$ the
  corresponding solutions to the IBVP~\eqref{eq:boh}. Then, for
  $t\in [0,T[$, the following estimate holds
  \begin{align*}
    &  \norma{u_1 (t) - u_2 (t)}_{\L1 (]a,b[)}
    \\ \leq \
    &\int_0^t \int_a^b \norma{\partial_x (f_2-f_1) (s,x, \cdot)}_{\L\infty (U(s))} \d{x}\d{s}
    \\
    & + \int_0^t \norma{\partial_u (f_2-f_1) (s, \cdot, \cdot)}_{\L\infty (]a,b[\times U(s))} \,
      \min\left\{\tv \left(u_1 (s)\right), \, \tv \left(u_2 (s)\right)\right\} \d{s}
    \\
    & + 2 \int_0^t  \norma{(f_2-f_1)(s,a, \cdot)}_{\L\infty(U(s))} \d{s}
      +2  \int_0^t \norma{(f_2-f_1)(s,b, \cdot)}_{\L\infty(U(s))} \d{s},
  \end{align*}
  where, with the notation introduced in Theorem~\ref{thm:bohEU},
  $\norma{u_i(s)}_{\L\infty(]a,b[)} \leq \mathcal{U}_i(s)$, for
  $i=1,2$, and
  \begin{displaymath}
    U(s) = [-\mathcal{U}(s), \, \mathcal{U}(s)],
    \quad
    \mbox{ with }
    \quad
    \mathcal{U}(s) = \max_{i=1,2} \, \mathcal{U}_i(s).
  \end{displaymath}
\end{theorem}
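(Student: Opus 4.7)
The approach is a Kruzhkov doubling of variables applied to the BLN entropy inequality (Definition~\ref{def:BLN}), adapted to track the discrepancy between the two fluxes $f_1$ and $f_2$. Write the entropy inequality for $u_1$ in variables $(t,x)$ with constant $k=u_2(s,y)$, and simultaneously the one for $u_2$ in variables $(s,y)$ with constant $k=u_1(t,x)$, each against a common nonnegative test function $\phi(t,x,s,y)\in\Cc1$. Sum them, and pass to a test function of the form $\phi(t,x,s,y)=\psi\!\left(\tfrac{t+s}{2},\tfrac{x+y}{2}\right)\rho_\epsilon(t-s)\rho_\epsilon(x-y)$, then let $\epsilon\to 0$, exactly as in the classical argument. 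The combined uniform $\L\infty$ and $\BV$ bounds (Theorem~\ref{thm:bohEU}) applied to $u_1$ and $u_2$ give the compactness needed to justify every limit.

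The interior analysis differs from the standard Kruzhkov one only because the two equations carry \emph{different} fluxes. After the usual cancellations one obtains, in the limit, a distributional inequality
\begin{displaymath}
\partial_t\modulo{u_1-u_2} + \partial_x\Bigl(\sgn(u_1-u_2)\bigl[f_1(t,x,u_1)-f_1(t,x,u_2)\bigr]\Bigr) \leq \mathcal{E}(t,x),
\end{displaymath}
where the error term $\mathcal{E}$ collects precisely the discrepancies between $f_1$ and $f_2$ produced by the asymmetry of the doubling. Splitting $f_2(u_2)-f_1(u_1)=[f_2(u_2)-f_1(u_2)]+[f_1(u_2)-f_1(u_1)]$ inside the flux brackets, and recalling that the entropy inequality already contains the term $-\sgn(u-k)\partial_x f(t,x,k)\phi$, one sees that $\mathcal{E}$ has two pieces: one bounded pointwise by $\modulo{\partial_x(f_2-f_1)(t,x,u_2(t,x))}$, the other coming from the derivative in $u$ of $(f_2-f_1)$ transported by the spatial derivative of $u_1$ or $u_2$. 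The latter is handled by choosing which equation to differentiate in the doubling step so that the $\partial_x u_i$ which survives can be controlled by $\tv(u_i(s))$, leading to the factor $\min\{\tv(u_1(s)),\tv(u_2(s))\}$ multiplying $\norma{\partial_u(f_2-f_1)}_{\L\infty}$.

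The boundary terms are the delicate part. Taking $\psi$ of the form $\chi(t)$ (independent of $x$ and approximating $\caratt{[0,t]}$), one integrates the above inequality over $]a,b[$ and picks up boundary flux contributions at $x=a$ and $x=b$. Using the BLN boundary terms in Definition~\ref{def:BLN} for \emph{both} $u_1$ and $u_2$ with the common boundary data $u_a,u_b$ (so the boundary sign contributions are the same for the two solutions), the boundary residuals reduce after an elementary case analysis on $\sgn(u_i(\cdot,a^+)-u_a)$ to a quantity controlled by $\norma{(f_2-f_1)(s,a,\cdot)}_{\L\infty(U(s))}$ at $x=a$, with a factor $2$ that arises because both the $u_1$ and the $u_2$ boundary contributions enter; symmetrically at $x=b$. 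Here $U(s)=[-\mathcal{U}(s),\mathcal{U}(s)]$ with $\mathcal{U}(s)=\max_i\mathcal{U}_i(s)$ is forced by the range of admissible trace values of $u_1$ and $u_2$.

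Assembling interior and boundary estimates one gets
\begin{displaymath}
\frac{\mathrm{d}}{\mathrm{d}t}\norma{u_1(t)-u_2(t)}_{\L1(]a,b[)} \leq \text{(RHS of the claim)}'(t),
\end{displaymath}
and integration in $t$ (using $u_1(0,\cdot)=u_2(0,\cdot)=u_o$) yields the stated estimate with no need for Gronwall iteration. The main obstacle is the rigorous treatment of the boundary contributions: the BLN traces $u_i(s,a^+),u_i(s,b^-)$ are only defined in the $\BV$ trace sense and the sign functions $\sgn(u_a-\cdot)$ appearing in the boundary entropy terms do not combine cleanly; one has to exploit Vasseur-type trace arguments (or the equivalence with the Martin--Vovelle formulation in Definition~\ref{def:MV}, which is where the coefficient $\norma{\partial_\rho f}_{\L\infty}$ comes from in~\eqref{eq:MV}) to turn the boundary residuals into the clean $\L\infty$-difference of the fluxes at $x=a,b$ that appears in the statement.
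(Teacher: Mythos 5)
The paper does not actually prove Theorem~\ref{thm:bohStab}: it is recalled verbatim from \cite[Theorem~2.6]{1d} in Appendix~\ref{sec:app}, and the text explicitly defers all proofs of the appendix results to that reference. There is therefore no internal proof to compare your argument against, and your attempt has to be judged on its own.

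On its merits, your outline follows the strategy that is standard for flux-stability estimates and is the one used in the cited reference: Kruzhkov doubling with two different fluxes, splitting the discrepancy $f_2(u_2)-f_1(u_1)$ so that one piece is controlled by $\norma{\partial_x(f_2-f_1)}_{\L\infty(U(s))}$ and the other by $\norma{\partial_u(f_2-f_1)}_{\L\infty}$ paired with the total variation of whichever solution one chooses to keep under the spatial derivative (whence the $\min\{\tv(u_1),\tv(u_2)\}$), followed by a direct time integration with no Gronwall step. That skeleton is correct. However, what you have written is a roadmap rather than a proof, and the gap sits exactly at the genuinely hard content of the theorem, namely the part that distinguishes it from the Cauchy-problem stability estimates of Bouchut--Perthame type: the boundary analysis. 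You must show that the residual boundary terms, which involve the traces $u_1(s,a^+)$ and $u_2(s,a^+)$ (these need not coincide even though the boundary data do, since the BLN condition constrains traces only weakly) together with the sign factors $\sgn(u_a(s)-k)$ of Definition~\ref{def:BLN}, collapse after the case analysis to exactly $2\,\norma{(f_2-f_1)(s,a,\cdot)}_{\L\infty(U(s))}$ and the symmetric term at $b$, with no surviving contribution proportional to $\norma{\partial_u f_i}_{\L\infty}$ times the boundary data. You acknowledge this is the delicate part and invoke trace arguments or the equivalence with Definition~\ref{def:MV} without carrying out the computation, so the key step --- including the provenance of the factor $2$ --- is asserted rather than established.
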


\end{appendices}

\small{ \bibliography{ibvp2}

  \bibliographystyle{abbrv} }

\end{document}